\title[On the isothermal Euler/Navier--Stokes system in a bounded domain]{Global-in-time dynamics of the two--phase fluid model in a bounded domain}
\author[Jung]{Jinwook Jung}
\address[Jinwook Jung]{\newline Research Institute of Basic Sciences \newline Seoul National University, Seoul  08826, Republic of Korea}
\email{warp100@snu.ac.kr}
\newtheorem{theorem}{Theorem}[section]
\newtheorem{lemma}{Lemma}[section]
\newtheorem{corollary}{Corollary}[section]
\newtheorem{proposition}{Proposition}[section]
\newtheorem{remark}{Remark}[section]
\newtheorem{definition}{Definition}[section]
\newcommand{\bbr}{\mathbb R}
\newcommand{\bbn} {\mathbb N}
\newcommand{\bq}{\begin{equation}}
\newcommand{\eq}{\end{equation}}
\newcommand{\N}{\mathbb{N}}
\newcommand{\ms}{\mathcal{S}}
\newcommand{\into}{\int_\Omega}
\newcommand{\mh}{\mathcal{H}}
\newcommand{\mt}{\mathcal{T}}
\newcommand{\e}{\varepsilon}
\newcommand{\R}{\mathbb R}
\newcommand{\lt}{\left}
\newcommand{\rt}{\right}
\newcommand{\om}{\Omega}
\newcommand{\pa}{\partial}
\def\moverlay{\mathpalette\mov@rlay}
\def\mov@rlay#1#2{\leavevmode\vtop{%
   \baselineskip\z@skip \lineskiplimit-\maxdimen
   \ialign{\hfil$\m@th#1##$\hfil\cr#2\crcr}}}
\newcommand{\charfusion}[3][\mathord]{
    #1{\ifx#1\mathop\vphantom{#2}\fi
        \mathpalette\mov@rlay{#2\cr#3}
      }
    \ifx#1\mathop\expandafter\displaylimits\fi}
\newcommand{\mc}{\mathcal{C}}
\begin{document}

\date{\today}

\subjclass{} \keywords{Global existence, Euler--Navier--Stokes system, kinematic boundary condition, two-phase fluid model, large-time behavior}

%
\begin{abstract}
In this work, we study the global existence of strong solutions and large-time behavior of a two-phase fluid model in a bounded domain. The model consists of the isothermal Euler equations and the isentropic compressible Navier--Stokes equations, coupled via the drag force. It was derived in \cite{CJpre} from a kinetic-fluid model describing the dynamics of particles subject to local alignment force and Brownian noises immersed in a compressible viscous fluid. For this system, we extend the local existence theory for strong solutions developed in \cite{CJpre} to obtain the global existence of strong solutions to the system. Moreover, we use the Lyapunov functional associated with the system to get large-time behavior estimates for global classical solutions.
\end{abstract}
\maketitle \centerline{\date}

\allowdisplaybreaks
\tableofcontents
\section{Introduction}
\setcounter{equation}{0}

In this paper, we are interested in a system of fluid equations in a bounded domain $\Omega \subset \R^3$ with smooth boundary $\pa\om$ and properties of its solutions, i.e. the global existence of strong solutions and large-time behavior. The system consists of the isothermal Euler equations and the isentropic compressible Navier-Stokes equations and they are coupled through the drag force. More specifically, let $\rho=\rho(x,t)$ and $n=n(x,t)$ be the densities of fluids at $(x,t) \in \Omega \times \R_+$ and $u=u(x,t)$ and $v=v(x,t)$ be the bulk velocities corresponding to $\rho$ and $n$, respectively. Then, the dynamics of a four-tuple $(\rho,u,n,v)$ is governed by the following two-phase fluid model: 
\begin{align}
\begin{aligned}\label{A-1}
&\partial_t \rho + \nabla_x \cdot (\rho u) = 0, \quad (x,t) \in \om \times \R_+,\\
&\partial_t (\rho u) + \nabla_x \cdot (\rho u \otimes u) + \nabla_x \rho = \rho(v-u),\\
&\partial_t n + \nabla_x \cdot (nv) = 0,\\
&\partial_t (nv) + \nabla_x \cdot (nv \otimes v) + \nabla_x p(n) - \Delta_x v  = -\rho(v-u), 
\end{aligned}
\end{align}
subject to initial and boundary conditions:
\begin{equation}\label{A-1_bdy}
\begin{cases}
(\rho(x,0), u(x,0), n(x,0), v(x,0)) = (\rho_0(x), u_0(x), n_0(x), v_0(x)), \quad \forall x \in \Omega, \\
u(x,t) \cdot r(x) \equiv 0,\quad \mbox{and} \quad v(x,t) \equiv 0, \quad \forall (x,t) \in \pa\Omega \times \R_+,
\end{cases}
\end{equation}
where $r=r(x)$ is the outward unit normal vector to $x \in \pa\om$. Here the pressure law is given by $p(n) = n^\gamma$ with $\gamma >1$. \\

System \eqref{A-1} can be derived from a kinetic-fluid model describing dynamics of particles with local alignment force and noises contained in a compressible viscous fluid.  Namely, let $f=f(x,\xi,t)$ be the number density of particles at the position $x \in \om $ with velocity $\xi \in \R^3$ at time $t > 0$, and $n = n(x,t)$  and $v = v(x,t)$ be the local density and velocity of the compressible viscous fluid, respectively. Then, we consider the kinetic-fluid model consisting of a Vlasov-Fokker-Planck equation and the isentropic compressible Navier-Stokes equations:

\begin{align}
\begin{aligned}\label{kin_flu}
&\partial_t f + \xi \cdot \nabla_x f + \nabla_\xi \cdot ((v-\xi) f) =  \nabla_\xi \cdot (\sigma\nabla_\xi f - \alpha(u - \xi)f), \quad (x,\xi,t) \in \om \times \R^3 \times \R_+,\\
&\partial_t n + \nabla_x \cdot (n v)=0, \quad (x,t) \in \om \times \R_+,\\
&\partial_t (n v) + \nabla_x \cdot (n v \otimes v) + \nabla_x p  -\mu\Delta_x v = -\int_{\bbr^3} (v-\xi) f \,d\xi, 
\end{aligned}
\end{align}
where $u$ is the local velocity of particles given by
\[u := \frac{\int_{\R^3} \xi f \,d\xi}{\int_{\R^3} f\,d\xi},\]
and the pressure law $p = p(n)$ is the same as in \eqref{A-1}. For boundary conditions, we set the homogeneous Dirichlet boundary condition for the viscous fluid velocity $v$ in \eqref{kin_flu}:
\[
v(x,t) = 0 \quad \mbox{for} \quad (x,t) \in \pa \om \times \R_+.
\]
To describe boundary conditions for the particle density $f$, we first set the outgoing/incoming boundaries written as
\[ 
\Sigma_{\pm} := \{ (x,\xi) \in \Sigma \, : \, \pm \,\xi \cdot r(x) >0\},
\]
respectively. We denote the traces of $f$ by $\gamma_{\pm} f(x,\xi,t) = f |_{\Sigma_{\pm}}$ and define
\[ 
L^p(\Sigma_{\pm}) := \left\{ g(x, \xi) \, : \, \left( \int_{\Sigma_{\pm}} |g(x,\xi)|^p |\xi \cdot r(x) | \,d\sigma (x) d\xi \right)^{1/p} < \infty \right\},   
\]
where $d\sigma(x)$ denotes the Euclidean metric on $\partial \Omega$. Now, on the particle density $f$, we impose the specular reflection boundary condition:
\bq\label{bdy_ref}
\gamma_- f(x,\xi,t) = \gamma_+ f (x, \mathcal{R}_x(\xi), t) \quad \mbox{for} \quad (x,\xi,t)\in\Sigma_- \times \R_+, 
\eq
where $\mathcal{R}_x(\xi) := \xi - 2(\xi\cdot r(x)) r(x)$ is the reflection operator. Note that this operator preserves the magnitude, i.e., $|\mathcal{R}_x (\xi)| = |\xi|$. \\

Here, we present a brief sketch  for derivation of our main system \eqref{A-1} from the kinetic-fluid model \eqref{kin_flu} (see \cite{CJpre} for details). First, we choose $\alpha = \sigma = \e^{-1}$ and let $(f^\e, n^\e, v^\e)$ be the corresponding solution to \eqref{kin_flu}.  Then we consider velocity moments of the kinetic equations
\[
\rho^\e (x,t) := \int_{\R^3} f^\e(x,\xi,t)\,d\xi, \quad (\rho^\e u^\e)(x,t) := \int_{\R^3} \xi f^\e(x, \xi, t)\,d\xi.
\]
Then the local densities $(\rho^\e,n^\e)$ and velocities $(u^\e,v^\e)$ would satisfy the following system:

$$\begin{aligned}
&\partial_t \rho^\e + \nabla_x \cdot (\rho^\e u^\e) = 0,\\
&\partial_t (\rho^\e u^\e) + \nabla_x \cdot (\rho^\e u^\e \otimes u^\e) + \nabla_x \cdot P^\e = \rho^\e(v^\e-u^\e),\\
&\partial_t n^\e + \nabla_x \cdot (n^\e v^\e) = 0,\\
&\partial_t (n^\e v^\e) + \nabla_x \cdot (n^\e v^\e \otimes v^\e) + \nabla_x p(n^\e) - \Delta_x v^\e  = -\rho^\e(v^\e-u^\e),
\end{aligned}$$
where the pressure $P^\e$ is given by
\[
P^\e(x,t) := \int_{\R^3 \times \R^3} (\xi-u^\e)\otimes (\xi-u^\e) f^\e\,d\xi.
\]
Then, we can show that $f^\e$ converges to the local Maxwellian as $\e \to 0$:
\[
f^\e(x,\xi,t) \to \frac{\rho(x,t)}{(2\pi)^{3/2}}e^{-\frac{|u(x,t)-\xi|^2}{2}},
\]
which also implies the convergence of the pressure $P^\e$:
\[
\int_{\R^3 \times \R^3} (\xi-u^\e) \otimes (\xi-u^\e) f^\e\,d\xi \to \rho \mathbb{I}_{3\times 3}, 
\]

Finally, we can derive the boundary condition for the limiting system as follows: First, we multiply the  specular reflection boundary condition \eqref{bdy_ref} by $(\xi \cdot r(x))$. Then, one integrates the resulting relation over the incoming boundary:
\bq\label{est_00}
\int_{\xi \cdot r(x) < 0} \gamma_- f^\e(x,\xi,t) (\xi \cdot r(x))\,d\xi = \int_{\xi \cdot r(x) < 0} \gamma_+ f^\e(x,\mathcal{R}_x(\xi),t) (\xi \cdot r(x))\,d\xi.
\eq
Next, we apply the change of variables $\xi_* = R_x(\xi)$ to \eqref{est_00} and combine the resulting relation with \eqref{est_00} to yield
\[
\int_{\R^3} \gamma f^\e(x,\xi,t) (\xi \cdot r(x))\,d\xi =0, \quad \mbox{i.e.,} \quad (\rho^\e u^\e)(x,t) \cdot r(x) = 0.
\]
Thus, taking the limit $\e \to 0$ leads to the kinematic boundary condition for the Euler equations in \eqref{A-1}:
\[
u \cdot r \equiv 0 \quad  \mbox{on} \quad \pa\om \times \R_+.
\]

\vspace{0.2cm}

Such kinetic-fluid models have been addressed in diverse perspectives due to possible applications in engineering, aerosols, medical sprays, etc \cite{BBJM05, BGLM15, Desv10, Oro81, Will58}. For example, the existence of weak solutions to Vlasov/incompressible Navier--Stokes systems is studied  for the spatial periodic domain \cite{BDGM} and the specular boundary condition \cite{Yu13}. Later, the large-time behavior estimates together with the global existence of solutions are considered in \cite{Hanpre, HMM20}.  For other types of kinetic equations with incompressible Navier--Stokes equations, Vlasov--Fokker--Planck/Navier--Stokes \cite{G-H-M-Z, G-J-V, G-J-V2}, Vlasov--Poisson/Navier--Stokes  \cite{AIK14, AKS10, CJpre2}, BGK/Navier--Stokes \cite{CLY21, CY20}, and Vlasov--Boltzmann/Navier--Stokes systems \cite{YY18}  are considered. For the coupling with compressible fluids, Vlasov/Euler \cite{BD06} and Vlasov--Fokker--Planck/Navier-Stokes equations \cite{CJpre, M-V, M-V2} are studied.\\

While kinetic-fluid models have been extensively studied so far, there have been few works on coupled fluid models. For instance, the isothermal Euler/incompressible and compressible Navier--Stokes equations \cite{C0, C},  the pressureless Euler/compressible Navier--Stokes equations \cite{CK16}, and  Euler--Poisson/incompressible Navier--Stokes equations \cite{CJpre2} are studied, under smallness assumptions on the smooth initial data. However, the previous results are mostly when the spatial domain is periodic or the whole space. When the spatial domain is bounded, there are numerous results for the existence of solutions to a sole compressible Euler or Navier--Stokes equations \cite{Ag, Eb79, F, FP00, FNP01, FJN12, L, PZ09, Sc86, Zh10}.  To the best of the author's knowledge, however, the global existence results of coupled fluid models in a bounded domain, especially the model consisting of the compressible Euler and Navier--Stokes equations have not been investigated before. \\

The results of this paper are two-fold. First, we extend the local existence theory in \cite{CJpre} to the global existence of system \eqref{A-1}. The difficulty of the problem mainly comes from the  kinematic boundary condition $u\cdot r \equiv 0$ imposed on the Euler equations. Due to the kinematic boundary condition, the usual energy estimates in $H^s(\Omega)$ do not work. In previous literature on compressible Euler equations in a bounded domain \cite{PZ09, Sc86, Zh10}, one needs $L^2$-estimates for time derivatives of $\rho$, $u$ and the curl of $u$ to cope with this problem. Thus, we also choose this approach to estimate the Euler equations in system \eqref{A-1}.  Moreover, this time derivative estimates requires different analyses on the drag force in the Navier-Stokes equations. In the previous literature on the global existence of Euler/Navier--Stokes systems \cite{C, CJpre2, CK16}, the authors first control the evolution of the viscous fluid velocity $v$ using the viscosity term in the Navier-Stokes equations. In turn, they used this to control $v$ in the drag force of the Euler parts.  However, In our system \eqref{A-1}, however, since we also need $L^2$-estimates for the time derivatives of $v$, the previous strategy needs some modification in its detail. Specifically, we carefully estimate $u$ in the drag force of the Navier--Stokes parts (see Lemma \ref{L2.5}) to get the desired dissipation estimates. \\

Second, we investigate the large-time behavior of global classical solutions to system \eqref{A-1}. Once the $L^\infty$-norm of $(\rho, u, n,v)$ are bounded uniformly in time, we can show that a Lyapunov functional consisting of the kinetic energy and fluctuations of densities around averages decays to 0 as time goes to infinity (see Theorem \ref{T1.2}). For this,  we consider an energy functional equivalent to the Lyapunov functional. Since the direct estimates for the energy functional do not give dissipation estimates with respect to densities $\rho$ and $n$, we use a Bogovskii-type inequality in a bounded domain to get the dissipation estimates for the perturbed energy functional, leading to the desired estimate for the Lyapunov functional. Furthermore, from the dissipation estimates for the Lyapunov functional,  we can obtain the convergence of strong solutions to \eqref{A-1} constructed in Theorem \ref{T1.1} toward the equilibrium in their solution space (see Corollary \ref{C2.1}).\\

The rest of this paper is organized as follows. In Section \ref{sec:2}, we present some definitions and notions used throughout this paper and summarize our main results for system \eqref{A-1}. In Section \ref{sec:3}, we show the global existence of strong solutions to \eqref{A-1}. In Section \ref{sec:4}, we provide the proof for the large-time behavior of global classical solutions to \eqref{A-1} and combine this result with the arguments in Section \ref{sec:3} to prove the large-time behavior of strong solutions in their solution space. Finally, Appendix \ref{app.A} is devoted to the proof of Lemma \ref{L2.5}.

\subsection{ Notation }  We summarize the notation used throughout this paper. For functions $f$, $\|f\|_{L^p}$ denote the usual $L^p(\om)$-norm. We also denote by $C$ a generic positive constant which may differ from line to line, and $C = C(\alpha,\beta,\dots)$ represents the positive constant depending on $\alpha,\beta,\dots$. For notational simplicity, we drop $x$-dependence of differential operators, i.e., $\nabla f := \nabla_x f$ and $\Delta f = \Delta_x f$. For any nonnegative integer $k$ and $p \in [1,\infty]$, $W^{k,p} := W^{k,p}(\Omega)$ stands for the $k$-th order $L^p$ Sobolev space. In particular, if $p=2$, we denote by $H^k := H^k(\Omega) = W^{k,2}(\Omega)$ for any $k \in \bbn$. Moreover, we let $\mathcal{C}^k(I;\mathcal{B})$ be the set of $k$-times continuously differentiable functions from an interval $I$ to a Banach space $\mathcal{B}$ and we introduce the following Banach spaces:
\[
\mathfrak{X}^s = \mathfrak{X}^s(T,\Omega) := \bigcap_{k=0}^{s}\mathcal{C}^k([0,T];H^{s-k}(\Omega)),\quad \mbox{and} \quad \|h(t)\|_{\mathfrak{X}^s}^2:=  \sum_{k=0}^{s} \left\| \frac{\partial^k f}{\partial t^k}(t)\right\|_{H^{s-k}}^2. 
\]
For functions $g=g(t)$ and $h=h(t)$ only depending on $t$, we write $g \approx h$ on $[0,T]$ if there exists a constant $C>0$ independent of $T$ such that
\[
\frac1C h(t) \le g(t) \le Ch(t), \quad \forall t \in [0,T].
\]
Finally, we denote $\nabla^\alpha$ ($\nabla_{t,x}^\alpha$, resp.) by a partial derivative with respect to $x$ (and $t$, resp.) with multi-index $\alpha \in (\N\cup\{0\})^3$ ($\alpha \in (\N\cup\{0\})^{3+1}$, resp.). 

\vspace{0.3cm}

\section{Main results}\label{sec:2}
\setcounter{equation}{0}

In this section, we introduce the definition of strong solutions to a system equivalent to \eqref{A-1} and state the result for the global-in-time existence of strong solutions and large-time behavior estimates. First of all, we reformulate the system \eqref{A-1} with new functions:  
\[
g := \log \lt(\frac{\rho}{\rho_c}\rt) \quad \mbox{with} \quad \rho_c = \into \rho\,dx > 0 \quad \mbox{and}  \quad h := n - \int_\Omega n \,dx =: n - n_c.
\]
Due to the conservation of mass, we have $\rho_c(t) = \rho_c(0)$ and $n_c(t) = n_c(0)$. Then formally, we can rewrite the system \eqref{A-1} as follows:
\begin{align}\label{A-2}
\begin{aligned}
&\partial_t g + u \cdot \nabla g + \nabla \cdot u = 0, \quad (x,t) \in \om \times \R_+,\\
&\partial_t u + u \cdot \nabla u + \nabla g = (v-u),\\
&\partial_t h + \nabla \cdot (hv) + n_c\nabla \cdot v = 0,\\
&\partial_t v + v \cdot \nabla  v + \frac{\nabla p(n_c+h)}{n_c+h} - \frac{1}{n_c+h}\Delta v  = -\frac{\rho_c e^{g}}{(n_c+h)}(v-u),
\end{aligned}
\end{align}
subject to initial data and boundary conditions:
\begin{align}\label{A-2_bdy}
\begin{aligned}
&(g(x,0), u(x,0), h(x,0), v(x,0)) = (g_0(x), u_0(x), h_0(x), v_0(x)), \quad x \in \Omega, \\
&u(x,t) \cdot r(x)  = 0, \quad v(x,t) = 0, \quad (x,t)\in \partial\Omega \times \R_+.
\end{aligned}
\end{align}
Without loss of generality, we set $\rho_c = n_c=1$. Now, we present the definition of strong solutions to the system \eqref{A-2}--\eqref{A-2_bdy} using the following solution space:
\[
\mh^s_T(\om) := \lt\{ (g,u,h,v) : (g,u,h,v) \in \mathfrak{X}^s(T,\om) \times [\mathfrak{X}^s(T,\om)]^3 \times \mathfrak{X}^s(T,\om) \times [\mathfrak{X}^s(T,\om)]^3 \rt\}.
\]

\begin{definition}\label{D2.2}
Let $T \in (0,\infty]$ and $s \ge 4$. We say a four-tuple $(g,u,h,v)$ is a strong solution to \eqref{A-2}--\eqref{A-2_bdy} on the time interval $[0,T]$ if it satisfies the following conditions:
\begin{itemize}
\item[(i)] $(g, u, h, v) \in \mh_T^s (\Omega)$. 
\item[(ii)] $(g,u,h,v)$ satisfies \eqref{A-2}--\eqref{A-2_bdy} in the distributional sense. 
\end{itemize}
\end{definition}

\begin{theorem}\label{T1.1}
Let $T \in (0,\infty)$ and $s\ge 4$. Suppose that the initial data $(g_0, u_0, h_0, v_0)$ satisfy the following regularity and smallness assumptions:
\begin{itemize}
\item[(i)] $(g_0, u_0, h_0, v_0) \in H^s(\Omega)\times H^s(\Omega)\times H^s(\Omega)\times H^s(\Omega)$.
\item[(ii)] The initial data satisfies the smallness condition:
\[
\|g_0\|_{H^s} + \|u_0\|_{H^s} + \|h_0\|_{H^s} + \|v_0\|_{H^s} < \e, 
\]
where $\e>0$ is a sufficiently small constant. 
\item[(iii)]
The initial data satisfies the compatibility conditions up to order s:
\[
\partial_t^k u(x,t) \cdot r(x)|_{t=0} =0, \quad \partial_t^k v(x,t)|_{t=0} =0, \quad x\in\partial\Omega, \quad k=0,1,\dots,s-1.
\]
\end{itemize}
Then, the initial-boundary value problem \eqref{A-2}--\eqref{A-2_bdy} has a unique solution 
$(g, u, h, v)\in \mh_\infty^s(\Omega)$ in the sense of Definition \ref{D2.2}.
\end{theorem}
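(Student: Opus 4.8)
\textbf{Proof strategy for Theorem \ref{T1.1}.} The plan is to upgrade the local strong solution furnished by the theory of \cite{CJpre} to a global one by establishing uniform-in-time a priori estimates and then running a continuation argument. Let $(g,u,h,v)$ be the local strong solution in $\mh^s_T(\om)$ on a maximal existence interval, and set
\[
\mathcal{E}_s(t) := \|(g,u,h,v)(t)\|_{\mathfrak{X}^s}^2, \qquad
\mathcal{D}_s(t) := \sum_{k=0}^{s}\bigl\|\partial_t^k u(t)\bigr\|_{H^{s-k}}^2 + \sum_{k=0}^{s}\bigl\|\nabla\partial_t^k v(t)\bigr\|_{H^{s-k}}^2 + \bigl\|(v-u)(t)\bigr\|_{H^s}^2 .
\]
The target a priori estimate is
\[
\frac{d}{dt}\mathcal{E}_s(t) + c\,\mathcal{D}_s(t) \le C\sqrt{\mathcal{E}_s(t)}\,\bigl(\mathcal{E}_s(t)+\mathcal{D}_s(t)\bigr)
\]
on the existence interval, with $c,C>0$ independent of $T$. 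Granted this, a standard continuity argument closes the proof: as long as $\mathcal{E}_s$ stays small, the embedding $H^s(\om)\hookrightarrow L^\infty(\om)$ ($s\ge 4$) keeps $\|g\|_{L^\infty}$ and $\|h\|_{L^\infty}$ small, so the coefficients $e^{g}$ and $1/(n_c+h)$ in \eqref{A-2} are bounded above and below and the weighted energies appearing in the computation are equivalent to $\mathcal{E}_s$; choosing $\e$ small enough that $C\sqrt{\mathcal{E}_s}\le c/2$ throughout gives $\frac{d}{dt}\mathcal{E}_s+\tfrac{c}{2}\mathcal{D}_s\le 0$, hence $\mathcal{E}_s(t)\le \mathcal{E}_s(0)\le C'\e^2$ and $\int_0^\infty \mathcal{D}_s(\tau)\,d\tau<\infty$, which precludes blow-up and yields $T=\infty$. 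Uniqueness is inherited from \cite{CJpre}.

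\emph{The Euler block.} The main difficulty is the kinematic boundary condition $u\cdot r=0$, which obstructs direct $H^s$ energy estimates for the hyperbolic pair $(g,u)$ in \eqref{A-2}: integrating the terms $\nabla g$ and $\nabla\cdot u$ by parts after applying $\nabla^\alpha$ leaves boundary integrals that cannot be controlled. Following the approach used for compressible Euler equations in a bounded domain \cite{PZ09, Sc86, Zh10}, I would instead (i) carry out $L^2$-type energy estimates on the system obtained by applying $\partial_t^k$ for $k=0,\dots,s$, exploiting the symmetric-hyperbolic structure of the $(g,u)$-subsystem and the relaxation term $-u$, and using the compatibility conditions in hypothesis (iii) to ensure that the traces $\partial_t^k u\cdot r$ vanish on $\pa\om$ so that no boundary integrals survive; (ii) estimate $\nabla\times u$ in $H^{s-1}$ from the vorticity equation obtained by taking the curl of the $u$-equation in \eqref{A-2}; and (iii) recover $\|u\|_{H^s}$ from $\|\nabla\cdot u\|_{H^{s-1}}$, $\|\nabla\times u\|_{H^{s-1}}$, $\|u\|_{L^2}$ and the boundary condition $u\cdot r=0$ by the div--curl (Hodge-type) elliptic estimate on $\om$, and recover $\|g\|_{H^s}$ by reading $\nabla g = (v-u)-\partial_t u-u\cdot\nabla u$ off the momentum equation and differentiating. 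All transport terms and commutators are quadratic or higher and are absorbed into the $\sqrt{\mathcal{E}_s}(\mathcal{E}_s+\mathcal{D}_s)$ remainder by Moser-type product and commutator inequalities.

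\emph{The Navier--Stokes block and the coupling.} For $(h,v)$ in \eqref{A-2} I would run the usual high-order energy method for the isentropic compressible Navier--Stokes equations: the transport equation for $h$ with source $n_c\nabla\cdot v$ is estimated as the one for $g$, while the viscous term $-\Delta v/(n_c+h)$ supplies, at every spatial order and for every number of time derivatives, the parabolic dissipation $\|\nabla\partial_t^k v\|_{H^{s-k}}^2$ in $\mathcal{D}_s$ (and, since $\partial_t^k v$ vanishes on $\pa\om$ by hypothesis (iii), Poincar\'e's inequality also controls $\|\partial_t^k v\|_{H^{s-k}}^2$). The feature that distinguishes the present analysis from the earlier Euler/Navier--Stokes works \cite{C, CK16, CJpre2} is that, because the time-derivative approach to the Euler block also forces $L^2$-estimates of $\partial_t^k v$, the drag term $-\rho_c e^{g}(v-u)/(n_c+h)$ in the $v$-equation must be treated so that its $u$-contribution does not spoil the $v$-dissipation; this is precisely the content of Lemma \ref{L2.5}, which I would invoke at this step. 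The drag coupling itself is closed by testing the two momentum equations against $u$ and $(n_c+h)v$ respectively and adding: at leading order the right-hand sides combine into the relaxation dissipation $-\|v-u\|^2$, which absorbs the last term of $\mathcal{D}_s$, the lower-order corrections being quadratic remainders.

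\emph{Conclusion.} Summing the Euler-block and Navier--Stokes-block estimates over $k\le s$ and over spatial multi-indices, and using the smallness of $\mathcal{E}_s$ to absorb the cross terms between the two blocks and the genuinely nonlinear terms into $c\,\mathcal{D}_s$ and $C\sqrt{\mathcal{E}_s}(\mathcal{E}_s+\mathcal{D}_s)$, produces the target differential inequality, and the continuity argument of the first paragraph extends the solution to $\mh^s_\infty(\om)$. I expect the Euler block to be the main obstacle: reconstructing the full $H^s$-regularity of $(g,u)$ from time derivatives, vorticity and divergence under the kinematic boundary condition, while simultaneously keeping the drag coupling compatible with the viscous and relaxation dissipations, is the delicate point --- and the bookkeeping that makes Lemma \ref{L2.5} sufficient for the $\partial_t^k v$ estimates is exactly where this paper departs from the existing literature.
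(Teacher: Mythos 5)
Your outline shares the paper's main ingredients --- $L^2$ estimates on time derivatives of $(g,u)$, curl estimates, the div--curl/Hodge reconstruction of the full $H^s$ norm of $u$, Lemma \ref{L2.5} for the drag in the Navier--Stokes part, and a continuation argument launched from the local theory of \cite{CJpre} --- but the shape of the a priori estimate, and hence the closing step, is genuinely different from what the paper does, and your version glosses over a real difficulty.

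The paper does not establish a single differential inequality $\frac{d}{dt}\mathcal{E}_s + c\mathcal{D}_s \le C\sqrt{\mathcal{E}_s}(\mathcal{E}_s+\mathcal{D}_s)$. Instead, Corollary \ref{C2.1} proves the stationary bound $\mathcal{H}(s;T)\le C^*\mathcal{H}_0(s)$ by \emph{induction on the order $\ell$}: Lemmas \ref{L2.2}--\ref{L2.6} are combined (with weights $\lambda_1,\lambda_2$) into a level-$\ell$ Gr\"onwall inequality \eqref{C2-1.5} whose right-hand side contains $\mathcal{S}(\ell-1;t)+W(\ell-1;t)$ with an $O(1)$ constant. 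These lower-order terms cannot be absorbed into the level-$\ell$ dissipation, whose coefficient $\lambda_3$ is forced to be small by the weighting; the paper handles them with the induction hypothesis $\mathcal{H}(\ell-1;T)\le C\mathcal{H}_0(\ell-1)$, then applies Gr\"onwall at level $\ell$. Your single-shot inequality skips exactly this absorption issue. It could in principle be rescued by summing the level-$\ell$ inequalities with geometrically decaying weights, but that is not automatic from smallness of $\mathcal{E}_s$ as you assert, and the paper does not pursue it.

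Your proposed dissipation $\mathcal{D}_s$ is also not the quantity that actually arises. The Euler relaxation $-u$ gives only $L^2$-in-$x$ dissipation of $\partial_t^k u$, the cross-term of Lemma \ref{L2.3} gives $L^2$ dissipation of $\partial_t^k g$, and Lemma \ref{L2.4} gives the curl in $\mathfrak{X}^{s-1}$; the genuine dissipation is $\mathcal{T}(s;t)$ plus the Navier--Stokes contributions, and $\sum_k\|\partial_t^k u\|_{H^{s-k}}^2$ is only recovered \emph{a posteriori} through the div--curl estimate of Lemma \ref{L2.1}, which additionally costs $\|v\|_{\mathfrak{X}^{s-1}}^2$. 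Likewise, $\|(v-u)\|_{H^s}^2$ is not cleanly dissipated: only the zeroth-order $\int_\Omega\rho|u-v|^2\,dx$ arises as a cross-dissipation (Proposition \ref{P2.1}). At higher orders the two momentum equations are estimated separately, with $\|\partial_t^\ell v\|_{L^2}^2$ appearing as a bad term in the Euler side (absorbed by the viscous dissipation) and $W(\ell-1;t)$ appearing as a bad term in the Navier--Stokes side --- the latter is precisely the content of Lemma \ref{L2.5} and the reason the paper's treatment of the drag term departs from \cite{C,CK16,CJpre2}. Your plan names Lemma \ref{L2.5} but attributes to the drag a clean $-\|v-u\|^2$ dissipation at all orders, which is not what happens.
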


\vspace{0.4cm}

\begin{remark}
\begin{enumerate}
\item
Here, the compatibility conditions $\partial_t^k u(x,t) \cdot r(x)|_{t=0} =0$ and $\partial_t^k v(x,t)|_{t=0} =0$ are actually given  recursively as follows:
\[
\begin{cases}
\displaystyle \pa_t^k u(x,t)\cdot r(x) |_{t=0} := -\pa_t^{k-1}(u \cdot \nabla u + \nabla g + u -v)(x,t)\cdot r(x) |_{t=0}=0,\\
\displaystyle \pa_t^k v(x,t)\cdot r(x)|_{t=0} := -\pa_t^{k-1} \lt( v \cdot \nabla v + \frac{\nabla p(1+h)}{1+h} -\frac{\Delta v}{1+h} + \frac{e^g}{1+h}(v-u)\rt)(x,t)\cdot r(x)|_{t=0}=0.
\end{cases}
\]

\item
Since $s\ge 4$, the solution $(g,u,h,v)$ obtained in Theorem \ref{T1.1} becomes a classical solution, i.e. $(g,u,h,v)\in\mc^2(\R_+ \times \om)$ and moreover, the smallness condition implies 
\[
\inf_{x \in \om}e^{g(x)}>0, \quad 1+\inf_{x\in\om} h(x) >0.
\]
Thus, the four-tuple $(\rho,u,n,v) \in \mc^2(\R_+ \times \om)$ solves the system \eqref{A-1}--\eqref{A-1_bdy}. Furthermore, we can deduce that $(\rho,u,n,v) \in \mh_\infty^s(\om)$. 
\end{enumerate}
\end{remark}

Next, we present the large-time behavior estimates for system \eqref{A-1}. To describe the result, we introduce a Lyapunov functional that measures the kinetic energy and fluctuations of densities around averages as follows:
\[
\mathscr{L}(t) := \frac12\into \rho|u|^2\,dx + \frac12\into n|v|^2\,dx + \into(\rho-\rho_c)^2\,dx + \into (n-n_c)^2\,dx.
\]
\begin{theorem}\label{T1.2}
Let $(\rho, u, n,v)$ be a global classical solution to system \eqref{A-1} satisfying the following conditions:

\begin{enumerate}
\item
$\rho \in [0,\bar\rho]$ and $n \in [0,\bar n]$ for some constants $\bar\rho, \bar n\in (0,\infty)$.
\vspace{0.1cm}

\item
 $ (u, v) \in [L^\infty(\bbr_+ \times \om)]^3 \times [L^\infty(\bbr_+ \times \om)]^3$.
\end{enumerate}
Then, we get
\[
\mathscr{L}(t) \le Ce^{-\eta_0  t}, \quad t \ge 0,
\]
where $C$ and $\eta_0$ are positive constants independent of $t$.
\end{theorem}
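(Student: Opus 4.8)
The plan is to exploit the dissipative structure of \eqref{A-1} by working with a perturbed energy functional equivalent to $\mathscr{L}(t)$. First I would write down the natural energy identity: multiplying the momentum equations by $u$ and $v$ respectively, integrating over $\Omega$, using the continuity equations together with the boundary conditions $u\cdot r\equiv 0$ and $v\equiv 0$ on $\partial\Omega$ (which kill all boundary terms, including the viscous one via $\int_\Omega \Delta v\cdot v\,dx = -\int_\Omega |\nabla v|^2\,dx$), and using the pressure relation $\int_\Omega \nabla p(n)\cdot v\,dx = -\int_\Omega p(n)\,\nabla\cdot v\,dx$ combined with the third equation in \eqref{A-1}. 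This should yield, after also handling the density fluctuation terms $\int_\Omega(\rho-\rho_c)^2$ and $\int_\Omega(n-n_c)^2$ via the continuity equations, an identity of the form
\[
\frac{d}{dt}\mathscr{L}(t) + \int_\Omega |\nabla v|^2\,dx + \int_\Omega \rho|v-u|^2\,dx \le 0
\]
(or $\le$ lower-order terms controllable by the $L^\infty$ bounds on $\rho,n,u,v$). The key point is that, because the pressure is isothermal for $\rho$ (i.e.\ $p(\rho)=\rho$, matching the quadratic penalization $(\rho-\rho_c)^2$ up to the standard relative-entropy manipulation) and a power law $n^\gamma$ for $n$, the fluctuation terms arise naturally and the dissipation is $\|\nabla v\|_{L^2}^2 + \|\sqrt{\rho}\,(v-u)\|_{L^2}^2$.

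The main obstacle, as flagged in the introduction, is that this raw dissipation does \emph{not} control the density fluctuations $\|\rho-\rho_c\|_{L^2}^2$ and $\|n-n_c\|_{L^2}^2$ directly, so one cannot immediately close a Gr\"onwall inequality of the type $\frac{d}{dt}\mathscr{L} \le -\eta_0\mathscr{L}$. To remedy this I would introduce a Bogovskii-type corrector. Since $\Omega$ is bounded with smooth boundary and $\int_\Omega(\rho-\rho_c)\,dx = \int_\Omega(n-n_c)\,dx = 0$, there exist vector fields $\Phi_\rho, \Phi_n \in H^1_0(\Omega)$ with $\nabla\cdot\Phi_\rho = \rho-\rho_c$, $\nabla\cdot\Phi_n = n-n_c$ and $\|\Phi_\rho\|_{H^1}\le C\|\rho-\rho_c\|_{L^2}$, $\|\Phi_n\|_{H^1}\le C\|n-n_c\|_{L^2}$ (Bogovskii operator). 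Testing the $\rho$- and $n$-momentum equations against $\Phi_\rho$ and $\Phi_n$ and integrating by parts produces, on the one hand, the terms $\int_\Omega \nabla\rho\cdot\Phi_\rho\,dx = -\int_\Omega \rho\,\nabla\cdot\Phi_\rho\,dx = -\int_\Omega(\rho-\rho_c)^2\,dx$ and similarly $-\int_\Omega(p(n)-p(n_c))(n-n_c)\,dx$, which since $p$ is increasing is $\le -c\|n-n_c\|_{L^2}^2$ on the bounded range $[0,\bar n]$; and on the other hand, remainder terms involving $\partial_t\Phi$ (handled by differentiating the Bogovskii relation and using the continuity equation), the convective terms, the drag term, and $\Delta v$ paired with $\Phi_n$, all of which are quadratic-or-higher and absorbable by the genuine dissipation $\|\nabla v\|_{L^2}^2 + \|\sqrt\rho(v-u)\|_{L^2}^2$ plus a small multiple of $\mathscr{L}$ using the $L^\infty$ bounds and Young's inequality.

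Then I would form the modified functional $\widetilde{\mathscr{L}}(t) := \mathscr{L}(t) + \delta\big(\int_\Omega \rho u\cdot\Phi_\rho\,dx + \int_\Omega n v\cdot\Phi_n\,dx\big)$ with $\delta>0$ small; by the Bogovskii bound and Young's inequality $\widetilde{\mathscr{L}}(t)\approx\mathscr{L}(t)$, and combining the energy identity with $\delta$ times the Bogovskii-tested estimates gives, for $\delta$ chosen small enough,
\[
\frac{d}{dt}\widetilde{\mathscr{L}}(t) + c\Big(\|\nabla v\|_{L^2}^2 + \|\sqrt{\rho}(v-u)\|_{L^2}^2 + \|\rho-\rho_c\|_{L^2}^2 + \|n-n_c\|_{L^2}^2\Big) \le 0.
\]
It remains to bound $\mathscr{L}(t)$ above by the dissipation: $\|\sqrt\rho\,u\|_{L^2}^2 \le 2\|\sqrt\rho(v-u)\|_{L^2}^2 + 2\|\sqrt\rho\,v\|_{L^2}^2$ and $\|\sqrt n\, v\|_{L^2}^2 \le \bar n\|v\|_{L^2}^2 \le C\bar n\|\nabla v\|_{L^2}^2$ by Poincar\'e (here the homogeneous Dirichlet condition on $v$ is essential), while $\|\sqrt\rho\, v\|_{L^2}^2 \le \bar\rho\|v\|_{L^2}^2$ is likewise controlled; together with the two fluctuation terms this yields $\mathscr{L}(t)\le C\cdot(\text{dissipation})$, hence $\widetilde{\mathscr{L}}(t)\le C\cdot(\text{dissipation})$ as well. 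Plugging this into the differential inequality gives $\frac{d}{dt}\widetilde{\mathscr{L}} + \eta_0\widetilde{\mathscr{L}}\le 0$ for some $\eta_0>0$, and Gr\"onwall plus the equivalence $\widetilde{\mathscr{L}}\approx\mathscr{L}$ yields $\mathscr{L}(t)\le Ce^{-\eta_0 t}$, as claimed. The step I expect to require the most care is controlling the time-derivative-of-Bogovskii term $\int_\Omega \rho u\cdot\partial_t\Phi_\rho\,dx$ (and its $n$-analogue): one substitutes $\partial_t\Phi_\rho = \mathcal{B}[\partial_t\rho] = \mathcal{B}[-\nabla\cdot(\rho u)]$ and uses the $L^2$-boundedness of the composition $\mathcal{B}\circ(\nabla\cdot)$ on appropriate subspaces, so that this term is bounded by $C\|\rho u\|_{L^2}\|u\|_{L^\infty}\lesssim \mathscr{L}$ or is absorbable — this is where the $L^\infty$ hypotheses on $(u,v)$ are genuinely used.
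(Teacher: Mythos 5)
Your outline reproduces the paper's strategy almost exactly --- relative-entropy energy identity, Bogovskii corrector paired with the momentum equations to recover the density-fluctuation dissipation, Poincar\'e and the drag term to control the kinetic energy, Gr\"onwall on a perturbed functional equivalent to $\mathscr{L}$ --- so no idea is missing. There is, however, a sign slip that would make the key step fail as written. By your own computation, testing the first momentum equation against $\Phi_\rho$ gives $\int_\Omega\partial_t(\rho u)\cdot\Phi_\rho\,dx = +\int_\Omega(\rho-\rho_c)^2\,dx + (\text{remainders})$, and hence
\[
\frac{d}{dt}\int_\Omega \rho u\cdot\Phi_\rho\,dx = +\int_\Omega(\rho-\rho_c)^2\,dx + (\text{remainders}),
\]
similarly in $n$. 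With your choice $\widetilde{\mathscr{L}} = \mathscr{L} + \delta\left(\int_\Omega\rho u\cdot\Phi_\rho\,dx + \int_\Omega nv\cdot\Phi_n\,dx\right)$, combining with the energy identity $\frac{d}{dt}\mathscr{E} = -\mathscr{D}$ therefore yields $\frac{d}{dt}\widetilde{\mathscr{L}} = -\mathscr{D} + \delta\int_\Omega(\rho-\rho_c)^2\,dx + \cdots$: the fluctuation term enters the effective dissipation with the \emph{wrong} sign, and the closed Gr\"onwall inequality you write down does not follow. You must \emph{subtract} the corrector, $\widetilde{\mathscr{L}} := \mathscr{L} - \delta\left(\int_\Omega\rho u\cdot\Phi_\rho\,dx + \int_\Omega nv\cdot\Phi_n\,dx\right)$; this is exactly what the paper does with $\mathscr{E}^{\sigma_1,\sigma_2} = \mathscr{E} - \sigma_1\int_\Omega\rho u\cdot\mathcal{B}[\rho-\rho_c]\,dx - \sigma_2\int_\Omega nv\cdot\mathcal{B}[n-n_c]\,dx$, and since the equivalence $\widetilde{\mathscr{L}}\approx\mathscr{L}$ for small $\delta$ holds regardless of the sign, nothing else in your argument changes.

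A smaller point: the exact identity $\frac{d}{dt}\mathscr{E}+\mathscr{D}=0$ holds for the relative-entropy functional $\mathscr{E}$ built from $\int_\Omega \rho\int_{\rho_c}^\rho\frac{z-\rho_c}{z^2}\,dz\,dx$ and its $n^\gamma$ analogue, not for $\mathscr{L}$ itself; the uniform bounds $\rho\le\bar\rho$, $n\le\bar n$ then give $\mathscr{E}\approx\mathscr{L}$, and the paper runs the whole differential inequality on $\mathscr{E}^{\sigma_1,\sigma_2}$ before converting to $\mathscr{L}$ at the very end. That is cleaner than your ``$\frac{d}{dt}\mathscr{L}\le$ lower-order terms'' shortcut. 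Your handling of the time-derivative term $\partial_t\Phi_\rho=\mathcal{B}[-\nabla\cdot(\rho u)]$ via the boundedness of $\mathcal{B}\circ(\nabla\cdot)$ on tangential fields is precisely property (3) of the Bogovskii lemma used in the paper, so that part is right on target.
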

\vspace{0.4cm}

As a corollary of Theorem \ref{T1.1} and \ref{T1.2}, we actually find out that the (unique) global strong solution we established in Theorem \ref{T1.1} converges to the equilibrium $(\rho_\infty, u_\infty, n_\infty, v_\infty) = (\rho_c, 0 , n_c, 0)$ in their solution space $\mathfrak{X}^s$ as time goes to infinity.                                                                          

\begin{corollary}\label{C2.1}
Under the assumptions in Theorem \ref{T1.1}, we can obtain
\[
\lt( \|g(t)\|_{\mathfrak{X}^s}^2 + \|u(t)\|_{\mathfrak{X}^s}^2 + \|h(t)\|_{\mathfrak{X}^s}^2 + \|v(t)\|_{\mathfrak{X}^s}^2\rt) \le Ce^{-\eta t}, \quad t \ge 0,
\]
where $C$ and $\eta$ are independent of $t$.
\end{corollary}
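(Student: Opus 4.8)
\emph{Strategy and first step.} The plan is to feed the exponential decay of the Lyapunov functional $\mathscr{L}$ from Theorem \ref{T1.2} into a Gronwall-type inequality for the full $\mathfrak{X}^s$ energy, which is available because the a priori estimates behind Theorem \ref{T1.1} already furnish a coercive dissipation for everything except the $L^2$ size of the density fluctuations. First I would check that the global solution of Theorem \ref{T1.1} satisfies the hypotheses of Theorem \ref{T1.2}. As obtained in the proof of Theorem \ref{T1.1}, the solution enjoys the uniform bound $\sup_{t\ge 0}\|(g,u,h,v)(t)\|_{\mathfrak{X}^s}\le C\e$; since $s\ge 4$, the embedding $H^s(\om)\hookrightarrow L^\infty(\om)$ together with the smallness of $\e$ gives $\rho=e^g\in[\underline\rho,\bar\rho]$ and $n=1+h\in[\underline n,\bar n]$ with $\underline\rho,\underline n>0$, and $u,v\in [L^\infty(\R_+\times\om)]^3$. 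Hence Theorem \ref{T1.2} applies and $\mathscr{L}(t)\le Ce^{-\eta_0 t}$. Moreover, since $\rho$ and $n$ are bounded from above and below and $g=\log(\rho/\rho_c)$ is comparable to $\rho-\rho_c$ on this range,
\[
\|g(t)\|_{L^2}^2+\|u(t)\|_{L^2}^2+\|h(t)\|_{L^2}^2+\|v(t)\|_{L^2}^2\le C\,\mathscr{L}(t)\le Ce^{-\eta_0 t},\qquad t\ge 0 .
\]

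\emph{Upgrading to the full energy.} Let $\mathcal{E}_s(t)$ and $\mathcal{D}_s(t)$ denote the energy and dissipation functionals built in Section \ref{sec:3}, so that $\mathcal{E}_s(t)\approx\|(g,u,h,v)(t)\|_{\mathfrak{X}^s}^2$ and, for $\e$ small, $\tfrac{d}{dt}\mathcal{E}_s+c\,\mathcal{D}_s\le 0$. The key point is that $\mathcal{D}_s$ dominates $\mathcal{E}_s$ up to the $L^2$ density fluctuations: the drag force produces damping of $u$ in $H^s$ (this is exactly where Lemma \ref{L2.5} enters), the viscosity produces damping of $\nabla v$ in $H^s$, the $L^2$-estimates for the time derivatives control $\partial_t^k$ of all unknowns, and the momentum equations in \eqref{A-2} express $\nabla g$ and $\nabla h$ through $\partial_t u$, $\partial_t v$, $\Delta v$ and already-damped lower-order terms, thereby recovering $\|\nabla g\|_{H^{s-1}}$ and $\|\nabla h\|_{H^{s-1}}$. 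Consequently
\[
\mathcal{D}_s(t)\ge c_1\mathcal{E}_s(t)-C\big(\|g(t)\|_{L^2}^2+\|h(t)\|_{L^2}^2\big),
\]
and combining this with the previous display,
\[
\frac{d}{dt}\mathcal{E}_s(t)+c_2\,\mathcal{E}_s(t)\le C\,\mathscr{L}(t)\le Ce^{-\eta_0 t}.
\]
An elementary ODE comparison (Gronwall) then yields $\mathcal{E}_s(t)\le Ce^{-\eta t}$ for any $\eta<\min\{c_2,\eta_0\}$, and the equivalence $\mathcal{E}_s(t)\approx\|(g,u,h,v)(t)\|_{\mathfrak{X}^s}^2$ gives the assertion of the corollary.

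\emph{Main obstacle.} The delicate step is the second one: one must verify that the dissipation generated by the drag term, the viscosity and the time-derivative $L^2$-estimates — precisely the ingredients dictated by the kinematic boundary condition $u\cdot r\equiv 0$ — genuinely controls the whole $\mathfrak{X}^s$ energy modulo the two scalar quantities $\|g\|_{L^2}^2$ and $\|h\|_{L^2}^2$, and that these are in turn absorbed by $\mathscr{L}$. If this bookkeeping does not close directly at level $s$, an alternative is to carry out the estimates of Section \ref{sec:3} at level $s+1$ (for a possibly smaller $\e$), deduce the $L^2$-type decay above from $\mathscr{L}$, and then interpolate $\|(g,u,h,v)(t)\|_{\mathfrak{X}^s}$ between the decaying low-order norm and the uniformly bounded $\mathfrak{X}^{s+1}$ norm; this slightly degrades the rate $\eta$ but is otherwise robust.
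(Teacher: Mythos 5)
Your overall plan — verify the hypotheses of Theorem~\ref{T1.2} using the uniform $\mathfrak{X}^s$ bound and Sobolev embedding, obtain the $L^2$--decay of $(g,u,h,v)$ from $\mathscr{L}(t)\le Ce^{-\eta_0 t}$, and then feed that into a Gr\"onwall inequality for the full $\mathfrak{X}^s$ energy — matches the paper's proof. The $L^2$--decay step, including the observation that $\rho\int_{\rho_c}^\rho (z-\rho_c)z^{-2}\,dz\approx|g|^2$, is exactly the first half of the paper's argument.

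The gap is in the ``upgrading'' step. You assert $\mathcal{D}_s(t)\ge c_1\mathcal{E}_s(t)-C(\|g\|_{L^2}^2+\|h\|_{L^2}^2)$, i.e.\ that the level--$s$ dissipation dominates the level--$s$ energy modulo only the scalar $L^2$ density fluctuations. That is not what the estimates of Section~\ref{sec:3} give. What one actually has (see \eqref{C2-1.5} and the surrounding discussion) is, for each $1\le\ell\le s$,
\[
\frac{d}{dt}\bigl[\tilde\ms(\ell;t)+\lambda_2\tilde{\mathcal{T}}(\ell;t)\bigr]+\frac{\lambda_3}{4}\bigl(\tilde\ms(\ell;t)+\tilde{\mathcal{T}}(\ell;t)+\|\nabla v\|_{\mathfrak{X}^\ell}^2\bigr)\le C_3\e\bigl(\tilde\ms(\ell;t)+\tilde{\mathcal{T}}(\ell;t)\bigr)+C\bigl(\ms(\ell-1;t)+W(\ell-1;t)\bigr),
\]
so the right-hand side carries the \emph{full} $\mathfrak{X}^{\ell-1}$ energy, not just $\|g\|_{L^2}^2+\|h\|_{L^2}^2$. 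These lower-order terms cannot be absorbed by the level-$\ell$ dissipation; they must already be known to decay. The paper therefore closes by induction on $\ell$: the $L^2$ decay from $\mathscr{L}$ gives $\ms(0;t)+W(0;t)\le Ce^{-\eta_0 t}$, a Gr\"onwall at $\ell=1$ then gives decay of $\ms(1;t)+\mathcal{T}(1;t)$ (equivalently, of $W(1;t)+\ms(1;t)$ by Lemma~\ref{L2.1}), and one iterates up to $\ell=s$. Your one-shot inequality at level $s$ silently presupposes precisely this decay of the $\ell-1$ level, so as written the argument is circular.

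You flag the issue yourself and offer interpolation against $\mathfrak{X}^{s+1}$ as a fallback, but that route is not available under the stated hypotheses: Theorem~\ref{T1.1} only assumes $(g_0,u_0,h_0,v_0)\in H^s$, so no uniform $\mathfrak{X}^{s+1}$ bound is at hand, and the $\mathfrak{X}^\ell$ spaces (which mix $\partial_t$ and $\nabla_x$ derivatives in a triangular fashion) do not obviously satisfy the Gagliardo--Nirenberg-type inequality you would need between $\mathfrak{X}^0$ and $\mathfrak{X}^s$. The correct fix is simply to replace your one-shot claim by the levelwise induction (equivalently, one can take a weighted sum $\sum_{\ell=1}^s\mu_\ell E_\ell$ with geometrically decaying weights so that the lower-order terms telescope, which amounts to the same thing). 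With that change the rest of your argument goes through and agrees with the paper.
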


%
%
%
%
%

\section{Global-in-time existence of the two-phase fluid system}\label{sec:3}
\setcounter{equation}{0}
In this section, we prove the global-in-time existence of the system \eqref{A-1}. Since the local well-posedness theory for the system \eqref{A-1} is established in \cite{CJpre}, we extend it to the global-in-time existence theory. First, the local well-posedness theory is as follows:

\begin{theorem}\label{thm_local}\cite{CJpre}
For $T>0$, there exists a small constant $\e_0>0$ such that if
\[
\|g_0\|_{H^s} + \|u_0\|_{H^s} + \|h_0\|_{H^s} + \|v_0\|_{H^s} <\e_0, 
\]
then a unique strong solution $(g,u,h,v) \in \mh^s_{T}(\om)$ of system \eqref{A-2} in the sense of Definition \ref{D2.2} corresponding to initial data $(g_0, u_0, h_0,v_0)$ exists up to time $t\le T$ and 
\[
\sup_{0 \leq t \leq T}\left(\|g(t)\|_{\mathfrak{X}^s} + \|u(t)\|_{\mathfrak{X}^s} + \|h(t)\|_{\mathfrak{X}^s}+\|h(t)\|_{\mathfrak{X}^s}\right) < \e_0^{1/2}.
\]
\end{theorem}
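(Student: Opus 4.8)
The plan is to prove Theorem \ref{thm_local} by the classical route: an iteration scheme that decouples the hyperbolic (Euler) block $(g,u)$ from the mixed parabolic--hyperbolic (Navier--Stokes) block $(h,v)$, closed by uniform a priori estimates in $\mathfrak{X}^s$ and a contraction in a weaker norm. Concretely, given an approximation $(g^m,u^m,h^m,v^m)$, one defines $(g^{m+1},u^{m+1})$ as the solution of the \emph{linear} symmetric hyperbolic system obtained from the first two equations of \eqref{A-2} by freezing the transport velocity and the drag at $u^m,v^m$, subject to $u^{m+1}\cdot r=0$ on $\pa\om$; and $(h^{m+1},v^{m+1})$ from the last two equations by freezing the transport velocity $v^m$, the coefficients $p'(1+h^m)/(1+h^m)$ and $1/(1+h^m)$, and the drag coefficient $e^{g^m}/(1+h^m)$, which yields a linear transport equation for $h^{m+1}$ coupled to a linear parabolic equation $\pa_t v^{m+1}-(1+h^m)^{-1}\Delta v^{m+1}=\cdots$ with the homogeneous Dirichlet condition $v^{m+1}=0$. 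Each linear step is solvable in $\mathfrak{X}^s$ by standard theory: Friedrichs--Lax--Kato energy methods for the hyperbolic block with characteristic boundary, and $L^2$ parabolic regularity together with the method of characteristics for the transported density $h^{m+1}$.

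The core of the argument is a \emph{uniform} bound: there is a threshold $\delta>0$ and, for each fixed $T>0$, a constant $\e_0=\e_0(T)$ such that if $\|(g_0,u_0,h_0,v_0)\|_{H^s}<\e_0$ and $\|(g^m,u^m,h^m,v^m)(t)\|_{\mathfrak{X}^s}\le\delta$ on $[0,T]$, then the same bound, indeed with $\e_0^{1/2}$ in place of $\delta$, holds for the next iterate on $[0,T]$. For the Navier--Stokes block this is the familiar compressible-fluid estimate: the parabolic dissipation of $\nabla v$ absorbs the top-order terms, $h$ is propagated along the flow of $v^m$ with no loss, and the drag has a favorable sign up to the small factor $e^{g^m}$, so a Gr\"onwall argument with quadratic right-hand side closes for small data on any fixed $[0,T]$. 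For the Euler block the plain $H^s(\om)$ energy estimate \emph{fails} because $u\cdot r=0$ is a characteristic boundary condition; following the strategy flagged in the introduction and used in \cite{PZ09,Sc86,Zh10}, one controls $\|g\|_{H^s}$ and $\|u\|_{H^s}$ instead through: (i) $L^2$ estimates of the pure time derivatives $\pa_t^k$, $k\le s$, whose boundary behavior is governed by the recursively defined compatibility conditions in Definition \ref{D2.2} and the Remark; (ii) $H^{s-1}$ estimates of $\nabla\times u$, which satisfies a transport equation whose energy estimate closes because the boundary integral $\int_{\pa\om}|\nabla^\alpha(\nabla\times u)|^2\,(u\cdot r)\,d\sigma$ vanishes; and (iii) the div--curl elliptic estimate $\|u\|_{H^s}\lesssim\|u\|_{L^2}+\|\nabla\cdot u\|_{H^{s-1}}+\|\nabla\times u\|_{H^{s-1}}$ (using $u\cdot r=0$), with $\nabla\cdot u$ read off from the $g$-equation. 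The drag term $v-u$ on the right of the $u$-equation costs nothing here because $v$ carries full $\mathfrak{X}^s$ parabolic regularity; the delicate bookkeeping that keeps the coupling derivative-neutral is precisely the point emphasized before Lemma \ref{L2.5}.

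With the uniform bound established, one shows the map $m\mapsto m+1$ is a contraction in the lower norm $\bigcap_{k=0}^{s-1}\mc^k([0,T];H^{s-1-k})$ (or even in $\mathfrak{X}^0$): the differences $(\delta g,\delta u,\delta h,\delta v)$ solve the same linear systems with right-hand sides quadratic in the (small) iterates, so for $\e_0$ small enough the sequence converges; the limit is a strong solution of \eqref{A-2}--\eqref{A-2_bdy} in the sense of Definition \ref{D2.2}, and its full $\mathfrak{X}^s$ regularity and time-continuity are recovered by interpolation and by trading time derivatives for space derivatives through the equations, the compatibility conditions matching the traces at $t=0$. Uniqueness follows from the same difference estimate applied directly to two putative solutions. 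I expect the main obstacle to be item (ii)--(iii) above: closing the top-order energy estimate for the Euler block under the characteristic boundary condition while simultaneously verifying that the compatibility conditions are preserved along the iteration, so that the time-derivative components of the $\mathfrak{X}^s$ norm are genuinely controlled up to $t=0$; the fluid--fluid coupling, by contrast, is comparatively benign since the viscosity endows $v$ with enough regularity that the drag never forces a loss of derivatives.
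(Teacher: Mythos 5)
You should first note that this paper does not prove Theorem \ref{thm_local} at all: it is quoted verbatim from \cite{CJpre} (``Since the local well-posedness theory \ldots is established in \cite{CJpre}, we extend it \ldots''), so there is no in-paper argument to compare yours against line by line. Judged on its own terms, your outline is the standard and, I believe, correct route, and it is consistent with the machinery this paper \emph{does} develop for the global theory: your items (i)--(iii) for the Euler block are exactly the content of Lemma \ref{L2.1} (pure time derivatives plus the vorticity plus the Bourguignon--Brezis div--curl inequality, with $\nabla\cdot u$ read off from the $g$-equation), and your treatment of the drag and of the parabolic block mirrors Lemmas \ref{L2.2}--\ref{L2.6}. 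So the decomposition and the key lemmas are the right ones.

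That said, what you have written is a program rather than a proof, and the steps you defer are precisely the ones that carry the technical weight. (a) Solvability of the \emph{linearized} Euler block in $\mathfrak{X}^s$ under the characteristic boundary condition $u\cdot r=0$ is not ``standard Friedrichs--Lax--Kato'': at a characteristic boundary one generically loses normal derivatives, and recovering full $H^s$ regularity requires the div--curl argument already at the linear level together with an induction that trades one time derivative for one space derivative at each step; you name this but do not execute it. (b) You assert that the compatibility conditions are preserved along the iteration and that the traces $\partial_t^k u\cdot r|_{t=0}$, $\partial_t^k v|_{t=0}$ of the iterates match; since the time derivatives at $t=0$ are defined recursively through the \emph{nonlinear} equations while the iterates solve \emph{linearized} ones, this needs an explicit verification (otherwise the $\mathcal{C}^k([0,T];H^{s-k})$ components of the $\mathfrak{X}^s$ norm are not controlled near $t=0$). (c) The uniform bound ``with $\e_0^{1/2}$ in place of $\delta$'' is exactly the quantitative closure one must prove, not a consequence of the scheme. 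None of these points indicates a wrong approach, but each is a genuine gap between your sketch and a complete proof of the cited theorem.
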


\subsection{A priori estimates for strong solutions} Here, we provide a priori estimates for strong solutions to system \eqref{A-2}. For this, we would use the following auxiliary functionals: for $p \in \N$,

\[
\begin{aligned}
&\mh(p; t) := \sup_{0 \le \tau \le t}\lt(  \|g(\tau)\|_{\mathfrak{X}^p}^2 +\|u(\tau)\|_{\mathfrak{X}^p}^2+\|h(\tau)\|_{\mathfrak{X}^p}^2+\|v(\tau)\|_{\mathfrak{X}^p}^2 \rt), \quad \mh_0(p) := \mh(p;0),\\
&\mt(p;t) :=  \lt(\sum_{\ell=0}^p\lt( \|\partial_t^\ell g(t)\|_{L^2}^2 + \|\partial_t^\ell u(t)\|_{L^2}^2\rt)  + \sum_{\ell=0}^{p-1} \|\partial_t^\ell \omega(t) \|_{H^{s-1-\ell}}^2 \rt),\\
&W(p;t):=  \lt(  \|g(t)\|_{\mathfrak{X}^p}^2 +\|u(t)\|_{\mathfrak{X}^p}^2 \rt),\\
&\mathcal{S}(p;t) :=  \lt(  \|h(t)\|_{\mathfrak{X}^p}^2 +\|v(t)\|_{\mathfrak{X}^p}^2 \rt),
\end{aligned}
\]
where $\omega := \nabla \times u$. Then, we have the following relation:

\begin{lemma}\label{L2.1}
For $T>0$, we can find a small positive constant $\e_p \ll 1$ such that if $\mh(s;T)\le \e_p^2 \ll 1$. Then for each $1 \le p \le s$ and $0<t \le T$, 
\[
W(p; t) \le C\lt(\mt(p;t) +  \|v(t)\|_{\mathfrak{X}^{p-1}}^2\rt),
\]
where $C=C(p,s,\Omega)$ is independent of $T$.
\end{lemma}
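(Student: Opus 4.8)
The plan is to reconstruct the full space--time norm $W(p;t)=\|g(t)\|_{\mathfrak{X}^p}^2+\|u(t)\|_{\mathfrak{X}^p}^2$ of the Euler unknowns from purely temporal $L^2$-information together with vorticity control, by feeding the first two equations of \eqref{A-2} into the classical div--curl (Hodge-type) elliptic estimate on the bounded smooth domain $\om$ (see e.g. \cite{Sc86, Zh10}): for every vector field $w$ and $m\ge1$,
\[
\|w\|_{H^m}\le C\big(\|\nabla\cdot w\|_{H^{m-1}}+\|\nabla\times w\|_{H^{m-1}}+\|w\cdot r\|_{H^{m-1/2}(\pa\om)}+\|w\|_{L^2}\big).
\]
Since $(g,u,h,v)$ is a strong solution, $u\cdot r\equiv0$ on $\pa\om\times\R_+$ and $r=r(x)$ is time-independent, hence $\pa_t^k u\cdot r\equiv0$ on $\pa\om\times\R_+$ for $k\le s-1$, so the boundary term above vanishes whenever $w=\pa_t^k u$ with $k\le s-1$. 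Applying $\pa_t^k$ to the continuity and $u$-momentum equations of \eqref{A-2} gives the identities $\nabla\cdot\pa_t^k u=-\pa_t^{k+1}g-\pa_t^k(u\cdot\nabla g)$ and $\nabla\pa_t^k g=\pa_t^k v-\pa_t^k u-\pa_t^{k+1}u-\pa_t^k(u\cdot\nabla u)$, which will drive the argument.

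Next I would set $B(k,m):=\|\pa_t^k g(t)\|_{H^m}+\|\pa_t^k u(t)\|_{H^m}$ for $k\ge0$, $m\ge0$ with $k+m\le p$, and establish for $m\ge1$ the recursive inequality
\[
B(k,m)\le C\big(\mt(p;t)^{1/2}+\|v(t)\|_{\mathfrak{X}^{p-1}}\big)+C\big(\|\pa_t^k(u\cdot\nabla g)\|_{H^{m-1}}+\|\pa_t^k(u\cdot\nabla u)\|_{H^{m-1}}\big)+C\,B(k+1,m-1)+C\,B(k,m-1),
\]
obtained by applying the div--curl estimate to $w=\pa_t^k u$, using $\|\pa_t^k g\|_{H^m}\le\|\pa_t^k g\|_{L^2}+\|\nabla\pa_t^k g\|_{H^{m-1}}$, and inserting the two identities above. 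Here one checks that $\|\pa_t^{k}g\|_{L^2}$, $\|\pa_t^{k+1}g\|_{L^2}$, $\|\pa_t^{k}u\|_{L^2}$, $\|\pa_t^{k+1}u\|_{L^2}$ (time-orders $\le p$) and $\|\pa_t^k\omega\|_{H^{m-1}}$ are all accounted for by $\mt(p;t)$, while $\|\pa_t^k v\|_{H^{m-1}}$ (time-order $k\le p-1$, space-order $m-1\le p-1-k$) is controlled by $\|v(t)\|_{\mathfrak{X}^{p-1}}$; the remaining linear pieces are exactly $B(k+1,m-1)$ and $B(k,m-1)$.

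For the nonlinear terms I would use that, since $s\ge4$ and $k+m\le p\le s$, every summand of $\pa_t^k(u\cdot\nabla g)$ or $\pa_t^k(u\cdot\nabla u)$ measured in $H^{m-1}$ is a product of two derivative-factors whose orders add up to at most $p$; the factor of smaller order then has order at most $p/2\le s/2\le s-2$, so it lies in $L^\infty$ via $H^2\hookrightarrow L^\infty$ and is bounded by $C\,\mh(s;t)^{1/2}$, while the other factor is bounded by $C\,W(p;t)^{1/2}$. Standard Moser/product estimates therefore give, for $k+m\le p$,
\[
\|\pa_t^k(u\cdot\nabla g)\|_{H^{m-1}}+\|\pa_t^k(u\cdot\nabla u)\|_{H^{m-1}}\le C\,\mh(s;t)^{1/2}\,W(p;t)^{1/2}\le C\,\e_p\,W(p;t)^{1/2}.
\]

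To conclude I would iterate the recursion: each step trades one spatial derivative for one time derivative while keeping the total order $k+m$ fixed, so after at most $p$ applications it terminates at terms $B(k',0)=\|\pa_t^{k'}g\|_{L^2}+\|\pa_t^{k'}u\|_{L^2}$ with $k'\le p$, which sit inside $\mt(p;t)$. Since only finitely many terms are produced, this yields $B(k,m)\le C\big(\mt(p;t)^{1/2}+\|v(t)\|_{\mathfrak{X}^{p-1}}\big)+C\,\e_p\,W(p;t)^{1/2}$ for all $k+m\le p$; summing over $k=0,\dots,p$ with $m=p-k$ reconstructs $W(p;t)^{1/2}$ on the left, and choosing $\e_p$ small enough that $C\e_p\le\tfrac12$ absorbs the last term, so that after squaring $W(p;t)\le C\big(\mt(p;t)+\|v(t)\|_{\mathfrak{X}^{p-1}}^2\big)$ with $C=C(p,s,\om)$ independent of $T$. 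The hard part will be the index bookkeeping through the iteration --- checking that each term generated along the way genuinely lands inside $\mt(p;t)$ or $\|v(t)\|_{\mathfrak{X}^{p-1}}^2$, which is precisely what fixes the ranges in the definitions of $\mt$ and of the $v$-contribution --- together with the parallel $L^\infty$-accounting in the Moser estimates, which is where the hypothesis $s\ge4$ is genuinely used; the rest is routine.
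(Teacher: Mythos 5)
Your proposal is correct and follows essentially the same route as the paper: apply the div--curl elliptic estimate (the same one the paper cites from Bourguignon--Brezis via Zhao) to $\pa_t^k u$, use the $g$- and $u$-equations to trade spatial derivatives for time derivatives and for $v$, handle the nonlinearities by the a priori smallness, and absorb the $\e_p W(p;t)^{1/2}$ term. The paper only spells out the case $p=1$ and says ``inductive arguments would yield the desired result''; your iteration on $(k,m)$ with $k+m$ fixed is a faithful and complete rendering of that sketch.
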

\begin{proof}
Since the proof is similar to that of \cite[Proposition 5.1]{CJpre} and \cite[Lemma 2.3]{Zh10}, we presnt a brief sketch for the proof. First, assume that
\[
\mh(s;T) \le \e_1^2 <1.
\]
the equation for $u$ in \eqref{A-2} gives
\bq\label{L3-1.1}\begin{aligned}
\|\nabla g\|_{L^2}^2 &= \|\partial_t u +u \cdot \nabla u + u-v \|_{L^2}^2\\
&\le C\lt(\|\partial_t u\|_{L^2}^2 +\|u\|_{L^\infty}^2 W(1;t) + \|u\|_{L^2}^2 + \|v\|_{L^2}^2\rt)\\
&\le C\lt(\|\partial_t u\|_{L^2}^2 +\e_1^2 W(1;t)  + \mathcal{T}(1;t) + \|v\|_{L^2}^2\rt)
\end{aligned}\eq
where we used Sobolev embedding $H^2(\om) \hookrightarrow \mc^0(\bar \om)$ and $C=C(s,\om) > 0$ is independent of $T$. Moreover, the equation for $g$ in \eqref{A-2} implies 
\[
\|\nabla \cdot u\|_{L^2}^2 \le C(\|\partial_t g\|_{L^2}^2 + \|u \cdot \nabla g\|_{L^2}^2) \le C\lt(\|\partial_t g\|_{L^2}^2 + \e_1^2 W(1;t)\rt),
\]
where $C=C(s,\Omega) > 0 $ is a positive constant. Now, we recall the following from \cite[Lemma 2.2]{Zh10} (see also \cite[Lemma 5]{B-B}): For ${\bf u} \in H^s(\Omega)$ with $ {\bf u} \cdot r \equiv 0$ on $\partial\Omega$, ${\bf u}$ satisfies
\[
\|{\bf u}\|_{H^s} \le C(\|\nabla \times {\bf u}\|_{H^{s-1}} + \|{\bf u}\|_{H^{s-1}} +\|\nabla \cdot {\bf u}\|_{H^{s-1}}).
\]
Hence, one gets
\bq\label{L3-1.2}\begin{aligned}
\|u\|_{H^1}^2 &\le C(\|\omega\|_{L^2}^2 + \|u\|_{L^2}^2 + \|\nabla \cdot u\|_{L^2}^2)\\
&\le C(\|\omega\|_{L^2}^2 + \|u\|_{L^2}^2 +\|\partial_t g\|_{L^2}^2 + \e_1^2 W(1;t))\\
&\le C(\mathcal{T}(1;t) + \e_1^2 W(1;t))
\end{aligned}\eq
where $C=C(s,\om) > 0$ is independent of $T$. Thus, we combine \eqref{L3-1.1} and \eqref{L3-1.2} to yield
\[\begin{aligned}
W(1;t) &=\lt(\|\pa_t g\|_{L^2}^2 + \|g\|_{H^1}^2 + \|\pa_t u\|_{L^2}^2 + \|u\|_{H^1}^2 \rt)\\
&\le \hat{C}\lt(\mathcal{T}(1;t) + \e_1^2 W(1;t) + \|v\|_{L^2}^2 \rt), 
\end{aligned}\]
where $\hat{C}=\hat{C}(s,\Omega) > 0$ is independent of $T$. So, if we choose $\e_1$ sufficiently small so that $\hat{C}\e_1^2 = \frac12$, the argument holds when $p=1$. From now on,  inductive arguments would yield the desired result.
\end{proof}
Now, we present $T$-independent estimates for $(g,u,h,v)$. First, we consider zeroth-order estimates.

\begin{proposition}\label{P2.1}
Assume that $\mh(s;T) \le \e^2 \ll 1$ for a sufficiently small $\e>0$ satisfying
\[
\sup_{0 \le t \le T} \|h(t)\|_{L^\infty} <\frac12, \quad \sup_{0 \le t \le T} \|g(t)\|_{L^\infty} <\log 2.
\]
Then, we have
\[
\mh(0;T) \le C\mh_0(0),
\]
where $C=C(\gamma)$ is independent of $T$.
\end{proposition}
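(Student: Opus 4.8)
The plan is to derive a closed differential inequality for the zeroth-order energy
\[
E_0(t) := \|g(t)\|_{L^2}^2 + \|u(t)\|_{L^2}^2 + \|h(t)\|_{L^2}^2 + \|v(t)\|_{L^2}^2 + \|\partial_t g(t)\|_{L^2}^2 + \cdots,
\]
(including whatever lowest-order time derivatives are needed so that $E_0 \approx \mathcal H(0;\cdot)$), and then close it via a Gr\"onwall argument using the smallness of $\mathcal H(s;T)$. First I would test the four equations in \eqref{A-2} against the natural multipliers: multiply the $g$-equation by $g$, the $u$-equation by $u$, the $h$-equation by $h$, and the $v$-equation by $(1+h)v$ (to cancel the $\tfrac{1}{1+h}$ factors in front of the pressure and viscosity terms). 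The viscosity term produces the good dissipation $-\|\nabla v\|_{L^2}^2$, and the drag terms $\rho(v-u)\cdot u$ and $-\rho(v-u)\cdot v$ combine, upon adding the Euler and Navier--Stokes velocity identities, into $-\int \rho|v-u|^2\,dx \le 0$ (here one uses $\rho = e^g \in [\tfrac12, 2]$ by hypothesis). The transport terms $u\cdot\nabla g$, $u\cdot\nabla u$, $v\cdot\nabla v$, and $\nabla\cdot(hv)$ are handled by integration by parts, using the boundary conditions $u\cdot r = 0$ and $v|_{\partial\Omega}=0$; every resulting term carries an extra factor of $\|u\|_{L^\infty} + \|v\|_{L^\infty} + \|\nabla u\|_{L^\infty} + \|h\|_{L^\infty} \lesssim \mathcal H(s;T)^{1/2} \lesssim \e$, hence is absorbed. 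The pressure cross-terms $\nabla g\cdot u$ (from the $u$-equation) and $\nabla\cdot u\cdot g$ (from the $g$-equation) cancel after integration by parts; similarly the pressure term $\frac{\nabla p(1+h)}{1+h}\cdot(1+h)v = \gamma(1+h)^{\gamma-1}\nabla h\cdot v$ pairs against $n_c\nabla\cdot v\cdot h = \nabla\cdot v\cdot h$ from the $h$-equation up to commutator terms that are again $O(\e)$-small (this is where the $C = C(\gamma)$ dependence enters).

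The subtlety is that the $L^2$-inner-product estimates alone do not yield dissipation in $g$ and $h$, and more seriously, the $u$-equation has no dissipation at all, so one cannot close on $\|u\|_{L^2}$ directly from the energy identity — the only control of $u$ at this level is through the drag term $\int\rho|v-u|^2$, which gives $\|v-u\|_{L^2}$. The resolution, exactly as flagged in the introduction and in Lemma \ref{L2.1}, is that zeroth-order spatial control of $u$ (and of $g$) is bootstrapped from time derivatives: I would additionally test the time-differentiated equations, e.g. apply $\partial_t$ to the $u$-equation and test against $\partial_t u$, and to the $g$-equation and test against $\partial_t g$, which produces the dissipative structure for the $\partial_t$-level quantities via the drag and viscosity terms; the commutators $[\partial_t, u\cdot\nabla]$ etc. are controlled by $\mathcal H(s;T)^{1/2}$ and higher Sobolev norms of the solution (legitimate since $s\ge 4$). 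Combining the resulting inequalities gives
\[
\frac{d}{dt}E_0(t) + c\,D_0(t) \le C\,\e\,\big(E_0(t) + D_0(t)\big),
\]
where $D_0$ is the dissipation $\|\nabla v\|_{L^2}^2 + \|v-u\|_{L^2}^2 + \cdots$ (plus, after invoking the Bogovskii-type / div-curl estimates as in Lemma \ref{L2.1}, control of $\|u\|_{L^2}$ and $\|\nabla\cdot u\|_{L^2}, \|\omega\|_{L^2}$ in terms of $D_0$ and time derivatives). For $\e$ small enough the $\e D_0$ term is absorbed into $cD_0$, leaving $\frac{d}{dt}E_0 \le C\e E_0$; but since the goal is a $T$-independent bound $\mathcal H(0;T)\le C\mathcal H_0(0)$, I would rather arrange the inequality so that the dissipation genuinely dominates — i.e. show $\frac{d}{dt}E_0 + cE_0 \le 0$ is \emph{not} available at this level, so instead one settles for $\frac{d}{dt}E_0 \le C\e E_0$ and integrates: since we will ultimately take $T$ finite in Theorem \ref{T1.1} only via the continuation argument, the correct reading is that $E_0(t)\le E_0(0)e^{C\e t}$ combined with the full a priori bound $\mathcal H(s;T)\le\e^2$ forces the constant; more cleanly, one feeds the dissipation $D_0$ back through Lemma \ref{L2.1} to upgrade the estimate into $\frac{d}{dt}E_0 + cE_0 \le C\e(E_0+D_0)$, absorb, and conclude $\mathcal H(0;T)\le CE_0(0) = C\mathcal H_0(0)$ with $C = C(\gamma)$.

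The main obstacle, and the step I expect to require the most care, is the treatment of the drag term $-\frac{\rho_c e^g}{1+h}(v-u)$ in the $v$-equation together with its counterpart $\rho(v-u)$ in the $u$-equation: naively the drag only controls $\|v-u\|_{L^2}$, not $\|u\|_{L^2}$ and $\|v\|_{L^2}$ separately, so one must either (a) exploit the Navier--Stokes dissipation $\|\nabla v\|_{L^2}^2 \ge c\|v\|_{L^2}^2$ via Poincar\'e (legitimate because $v|_{\partial\Omega}=0$) to pin down $v$, and then $\|u\|_{L^2} \le \|v-u\|_{L^2} + \|v\|_{L^2}$; or (b) as the introduction indicates (cf. Lemma \ref{L2.5}), carefully keep the $u$-contribution in the drag of the Navier--Stokes part rather than immediately bounding it, to avoid a bad sign. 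A secondary technical point is the variable-coefficient factors $\frac{1}{1+h}$ and $(1+h)^{\gamma-1}$: testing the $v$-equation against $(1+h)v$ rather than $v$ is essential to symmetrize the viscosity and pressure terms, and the extra terms generated by differentiating $(1+h)$ in time (from $\partial_t h = -\nabla\cdot(hv) - \nabla\cdot v$) must be shown to be $O(\e)$-controllable, which they are since $\|\partial_t h\|_{L^\infty}$ is small under the standing hypothesis $\mathcal H(s;T)\le\e^2$ with $s\ge 4$.
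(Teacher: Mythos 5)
Your proposal goes down a much more elaborate and, as written, ultimately unclosed route than the paper uses, and it rests on a misconception. The paper's proof of Proposition~\ref{P2.1} works directly with the \emph{original} system \eqref{A-1} and the physical energy
$\frac12\int_\Omega\rho|u|^2\,dx+\frac12\int_\Omega n|v|^2\,dx+\int_\Omega\rho\log\rho\,dx+\frac{1}{\gamma-1}\int_\Omega n^\gamma\,dx$,
whose time derivative is \emph{exactly} $-\int_\Omega|\nabla v|^2\,dx-\int_\Omega\rho|u-v|^2\,dx\le 0$. This quantity is therefore non-increasing, so no Gr\"onwall, no commutator estimates, no Poincar\'e inequality, no Lemma~\ref{L2.1} and no time-derivative bootstrapping are needed at the zeroth order. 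One only needs the Taylor-expansion equivalences $\int_\Omega\rho\log\rho\,dx\approx\|g\|_{L^2}^2$ (after subtracting the conserved $\int_\Omega(\rho-1)\,dx=0$) and $\frac{1}{\gamma-1}\int_\Omega(n^\gamma-n_c^\gamma)\,dx\approx\|h\|_{L^2}^2$, both valid under the standing $L^\infty$ smallness of $g$ and $h$, together with $\rho=e^g\in[\tfrac12,2]$ so that $\int_\Omega\rho|u|^2\,dx\approx\|u\|_{L^2}^2$; this gives $\mh(0;T)\le C\mh_0(0)$ with $C=C(\gamma)$ immediately.

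Your central worry --- that ``the only control of $u$ at this level is through the drag term $\int\rho|u-v|^2$'' and that one ``cannot close on $\|u\|_{L^2}$ directly from the energy identity'' --- is not correct: the kinetic part $\frac12\int_\Omega\rho|u|^2\,dx$ of the \emph{monotone} energy already controls $\|u\|_{L^2}^2$ once $\rho$ is bounded below, which is exactly what the hypothesis $\|g\|_{L^\infty}<\log 2$ gives. (Incidentally, in \eqref{A-2} the $u$-equation \emph{does} have the friction term $-u$ on the right, so even the linearized $L^2$ estimate produces dissipation in $u$; but the point is that dissipation isn't needed here.) This misconception then leads you to include $\|\partial_t g\|_{L^2}^2+\cdots$ in $E_0$, although $\mh(0;\cdot)$ by definition contains no time derivatives, and to invoke Lemma~\ref{L2.1}, which only applies for $p\ge 1$. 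Finally, the Gr\"onwall step you arrive at, $\frac{d}{dt}E_0\le C\varepsilon E_0$, yields $E_0(t)\le E_0(0)e^{C\varepsilon t}$, which is \emph{not} $T$-independent, and the subsequent paragraph where you propose to ``feed the dissipation back through Lemma~\ref{L2.1}'' to fix this does not identify a concrete mechanism for eliminating the exponential; as written the proof does not close. The genuine gap is therefore that you missed the direct monotone-energy argument and replaced it with a scheme whose final step is unsound.
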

\begin{proof}

From the original system \eqref{A-1}, straightforward computation gives
\begin{align*}
&\frac{d}{dt}\lt( \frac12 \int_\om \rho |u|^2\,dx + \int_\om \rho\log\rho\,dx \rt) = -\into \rho(u-v)\cdot u\,dx,\\
&\frac{d}{dt}\lt( \frac12 \int_\om n|v|^2\,dx+ \frac{1}{\gamma-1}\int_\om n^\gamma \,dx \rt)= -\into|\nabla v|^2\,dx +\into \rho(u-v)\cdot v\,dx.
\end{align*}
Thus, we can get
\begin{align*}
\frac{d}{dt}&\lt( \frac12 \int_\om \rho |u|^2\,dx + \frac12 \int_\om n|v|^2\,dx + \int_\om \rho\log\rho\,dx + \frac{1}{\gamma-1}\int_\om n^\gamma \,dx \rt) \\
&\qquad + \int_\om |\nabla v|^2\,dx + \int_\om \rho|u-v|^2\,dx = 0.
\end{align*}
Since we have
\[
\into \rho\log\rho\,dx = \into \lt(\rho\log\rho +1-\rho \rt)\,dx = \into ((g-1)e^g +1)\,dx, \]
we can use the second-order Taylor polynomial for the function $f(x) = (x-1)e^x$ at $x=0$ to attain
\[
\lt(1-\|g\|_{L^\infty}\rt)e^{-\|g\|_{L^\infty}} \|g\|_{L^2}^2 \le \into \rho\log\rho\,dx \le \lt(1+\|g\|_{L^\infty}\rt)e^{\|g\|_{L^\infty}} \|g\|_{L^2}^2.
\]
Moreover, since $|h|^2 = \lt(1-\frac{n_c}{n}\rt)^2 n^{2-\gamma} \cdot n^{\gamma}$, this gives  
\[
\frac14 \min\lt\{\lt(\frac12\rt)^{2-\gamma}, \lt(\frac32\rt)^{2-\gamma}  \rt\}\int_\om n^\gamma \,dx  \le \int_\om |h|^2\,dx \le  4\max\lt\{\lt(\frac12\rt)^{2-\gamma}, \lt(\frac32\rt)^{2-\gamma}  \rt\}\int_\om n^\gamma \,dx.
\]
Hence, we can obtain
\begin{align*}
&\lt(\|g(t)\|_{L^2}^2 + \|u(t)\|_{L^2}^2 + \|h(t)\|_{L^2}^2 + \|v(t)\|_{L^2}^2\rt)\\
&\quad \le C\lt(\frac12 \int_\om \rho |u|^2\,dx + \frac12 \int_\om n|v|^2\,dx + \int_\om \rho\log\rho\,dx + \frac{1}{\gamma-1}\int_\om n^\gamma \,dx \rt)\\
&\quad \le C\lt(\frac12 \int_\om \rho_0 |u_0|^2\,dx + \frac12 \int_\om n_0|v_0|^2\,dx + \int_\om \rho_0\log\rho_0\,dx + \frac{1}{\gamma-1}\int_\om n_0^\gamma \,dx \rt)\\
&\quad \le C\lt(\|g_0\|_{L^2}^2 + \|u_0\|_{L^2}^2 + \|h_0\|_{L^2}^2 + \|v_0\|_{L^2}^2\rt),
\end{align*}
where $C=C(\gamma)$ is independent of $T$. This implies the result.
\end{proof}

Next, we estimate time derivatives of $g$ and $u$ as follows.

\begin{lemma}\label{L2.2}
For $T>0$, suppose that $\mh(s;T)\le \e^2 \ll 1$ for a sufficiently small $\e>0$. Then for each $1 \le \ell \le s$ and $0<t \le T$, 
\[
\frac{d}{dt}\lt( \|\pa_t^\ell g\|_{L^2}^2 + \|\pa_t^\ell u\|_{L^2}^2 \rt) + \frac32\|\pa_t^\ell u\|_{L^2}^2 \le C\e  W(\ell;t) + 2\|\pa_t^\ell v\|_{L^2}^2,
\]
where $C=C(\ell, s, \Omega)$ is independent of $T$.
\end{lemma}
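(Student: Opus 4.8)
The plan is to energy-estimate the $g$- and $u$-equations of \eqref{A-2} after differentiating them $\ell$ times in $t$. Applying $\pa_t^\ell$ to the first equation gives $\pa_t^{\ell+1}g+\nabla\cdot\pa_t^\ell u=-\pa_t^\ell(u\cdot\nabla g)$, and to the second (after moving the $-u$ part of the drag force to the left) gives $\pa_t^{\ell+1}u+\nabla\pa_t^\ell g+\pa_t^\ell u=\pa_t^\ell v-\pa_t^\ell(u\cdot\nabla u)$. I would test the first identity against $\pa_t^\ell g$ and the second against $\pa_t^\ell u$ in $L^2(\om)$ and add. The highest time-derivative terms produce $\tfrac12\frac{d}{dt}\big(\|\pa_t^\ell g\|_{L^2}^2+\|\pa_t^\ell u\|_{L^2}^2\big)$, the zero-order drag term produces $+\|\pa_t^\ell u\|_{L^2}^2$ on the left, and — the key cancellation — the two ``pressure/divergence'' cross terms combine into $\int_\om\nabla\cdot(\pa_t^\ell g\,\pa_t^\ell u)\,dx=\int_{\pa\om}(\pa_t^\ell u\cdot r)\,\pa_t^\ell g\,d\sigma$, which vanishes because $r$ is independent of $t$, so the boundary condition $u\cdot r\equiv0$ on $\pa\om\times\R_+$ (with the compatibility conditions ensuring the traces make sense) forces $\pa_t^\ell u\cdot r\equiv0$ on $\pa\om$. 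For the source $\pa_t^\ell v$ I would use Young's inequality in the form $2\int_\om\pa_t^\ell v\cdot\pa_t^\ell u\,dx\le\tfrac12\|\pa_t^\ell u\|_{L^2}^2+2\|\pa_t^\ell v\|_{L^2}^2$; after multiplying the summed identity by $2$, this is precisely what turns the $+2\|\pa_t^\ell u\|_{L^2}^2$ into the asserted $+\tfrac32\|\pa_t^\ell u\|_{L^2}^2$ and generates the $2\|\pa_t^\ell v\|_{L^2}^2$ term.

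What then remains is to bound the two commutator integrals $\int_\om\pa_t^\ell(u\cdot\nabla g)\,\pa_t^\ell g\,dx$ and $\int_\om\pa_t^\ell(u\cdot\nabla u)\cdot\pa_t^\ell u\,dx$ by $C\e\,W(\ell;t)$. Expanding by the Leibniz rule, the pure transport pieces $\int_\om(u\cdot\nabla\pa_t^\ell g)\,\pa_t^\ell g\,dx=-\tfrac12\int_\om(\nabla\cdot u)|\pa_t^\ell g|^2\,dx$ and its $u$-analogue are controlled by $\|\nabla\cdot u\|_{L^\infty}W(\ell;t)\le C\|u\|_{H^s}W(\ell;t)\le C\e\,W(\ell;t)$ (the boundary term from integrating $u\cdot\nabla$ by parts again vanishes by $u\cdot r=0$). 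Every remaining piece $\pa_t^j(\cdot)\cdot\nabla\pa_t^{\ell-j}(\cdot)$, $1\le j\le\ell$, is genuinely at least quadratic in $(g,u)$; by Hölder, the Sobolev embeddings $H^{s-1}(\om)\hookrightarrow W^{1,\infty}(\om)$ and $H^2(\om)\hookrightarrow L^\infty(\om)$ (valid for $s\ge4$), and Moser-type product estimates, one can place one factor in a norm bounded by $\sqrt{\mh(s;T)}\le\e$ and the remaining factors in norms summing into $W(\ell;t)=\|g\|_{\mathfrak X^\ell}^2+\|u\|_{\mathfrak X^\ell}^2$ — note that the $g$- and $u$-nonlinearities of \eqref{A-2} involve only $g$ and $u$, which is exactly why the right-hand side features $W(\ell;t)$ rather than the full $\mathfrak X^\ell$-energy. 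Collecting everything yields the stated differential inequality with $C=C(\ell,s,\om)$ independent of $T$.

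The main obstacle is the top order $\ell=s$: there $\pa_t^sg$ and $\pa_t^su$ lie only in $L^2(\om)$, so the boundary integral above, the $(\ell{=}s)$-differentiated $u$-equation, and even the differentiability of $t\mapsto\|\pa_t^su(t)\|_{L^2}^2$ are not literally justified at this regularity. As in \cite{Zh10,CJpre}, I would first carry out the computation on the smoother approximate solutions supplied by the local-existence construction — for which all integrations by parts and the identities $\pa_t^ku\cdot r\equiv0$ on $\pa\om$ are legitimate — obtain the inequality with $T$-independent constants, and then pass to the limit using the uniform $\mathfrak X^s$-bounds and lower semicontinuity of the $L^2$-norms, so that the $\ell=s$ case holds in the integrated form used subsequently. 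A secondary, purely bookkeeping point is keeping the constants in the Young inequalities sharp enough to land exactly the coefficients $\tfrac32$ and $2$, which is routine once the structure above is in place.
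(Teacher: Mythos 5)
Your proposal matches the paper's own proof of Lemma \ref{L2.2}: differentiate the $g$- and $u$-equations $\ell$ times in $t$, test against $\partial_t^\ell g$ and $\partial_t^\ell u$, use the cancellation $\int_\Omega\nabla\cdot(\partial_t^\ell g\,\partial_t^\ell u)\,dx=0$ coming from $\partial_t^\ell u\cdot r\equiv 0$ on $\partial\Omega$, bound the Leibniz commutator terms by $C\varepsilon\,W(\ell;t)$ via Sobolev embedding, and apply Young's inequality to the drag/source term to produce the coefficients $\tfrac32$ and $2$. The additional remarks about justifying the $\ell=s$ computation on the smoother approximating solutions and passing to the limit are a sound supplement that the paper leaves implicit.
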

\begin{proof}
For $1\le \ell \le s$, we use Sobolev inequality to have

\begin{align*}
\frac12\frac{d}{dt}\|\pa_t^\ell g\|_{L^2}^2 &= \frac12 \int_\om (\nabla \cdot u) |\pa_t^\ell g|^2\,dx -\sum_{r=0}^{\ell-1} \binom{\ell}{r} \int_\om( \nabla (\partial_t^r g) \cdot \pa_t^{\ell-r} u) \pa_t^\ell g\,dx\\
&\quad -\int_\om \nabla \cdot (\pa_t^\ell u) \pa_t^\ell g\,dx\\
&\le C\e  W(\ell;t) -\int_\om \nabla \cdot (\pa_t^\ell u) \pa_t^\ell g\,dx,
\end{align*}
where $C=C(\ell,s, \Omega)$ is independent of $T$.\\

\noindent For $\pa_t^\ell u$, one uses Sobolev inequality and Young's inequality to get

\begin{align*}
\frac12\frac{d}{dt}\|\pa_t^\ell u\|_{L^2}^2 &= \frac12\int_\om (\nabla \cdot u) |\pa_t^\ell u|^2\,dx -\sum_{r=0}^{\ell-1} \binom{\ell}{r}\int_\om (\partial_t^{\ell-r} u \cdot \nabla (\pa_t^r u)) \cdot \pa_t^\ell u\,dx\\
&\quad - \int_\om \nabla(\pa_t^\ell g)\cdot \pa_t^\ell u\,dx -\int_\om (\pa_t^\ell u -\pa_t^\ell v)\cdot \pa_t^\ell u\,dx\\
&\le C\e  W(\ell,t) -\int_\om \nabla(\pa_t^\ell g)\cdot \pa_t^\ell u\,dx -\frac34\|\pa_t^\ell u\|_{L^2}^2 + \|\pa_t^\ell v\|_{L^2}^2.
\end{align*}
where $C=C(\ell,s, \Omega)$ is independent of $T$. Thus, we combine the previous estimates to get the desired result.
\end{proof}
Then, we aim to obtain the dissipation with respect to $g$.

\begin{lemma}\label{L2.3}
For $T>0$, suppose that $\mh(s;T)\le \e ^2 \ll 1$ for a sufficiently small $\e >0$. Then for each $1 \le \ell \le s$ and $0<t \le T$,
\[
-\frac{d}{dt}\int_\om (\partial_t^{\ell-1}g) (\partial_t^\ell g) \,dx +\|\partial_t^\ell g\|_{L^2}^2 \le C\e  W(\ell;t) + 2\|\partial_t^\ell u\|_{L^2}^2 + \frac34 \|\pa_t^{\ell-1} u\|_{L^2}^2 + \frac34 \|\pa_t^{\ell-1}v \|_{L^2}^2,
\]
where $C=C(\ell,s.\om)$ is independent of $T$.
\end{lemma}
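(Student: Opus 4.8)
The idea is to exploit the algebraic identity created by the time derivative sitting in front: since $\partial_t(\partial_t^{\ell-1}g)=\partial_t^\ell g$, the Leibniz rule gives $\frac{d}{dt}\into(\partial_t^{\ell-1}g)(\partial_t^\ell g)\,dx=\|\partial_t^\ell g\|_{L^2}^2+\into(\partial_t^{\ell-1}g)(\partial_t^{\ell+1}g)\,dx$, so the left-hand side of the claimed estimate is exactly $-\into(\partial_t^{\ell-1}g)(\partial_t^{\ell+1}g)\,dx$. The plan is to remove the apparently too-high-order factor $\partial_t^{\ell+1}g$ using the continuity equation for $g$ in \eqref{A-2}: differentiated $\ell$ times it reads $\partial_t^{\ell+1}g=-\partial_t^\ell(u\cdot\nabla g)-\nabla\cdot\partial_t^\ell u$, which splits the quantity into a genuinely nonlinear piece $\into(\partial_t^{\ell-1}g)\,\partial_t^\ell(u\cdot\nabla g)\,dx$ and a ``linear'' piece $\into(\partial_t^{\ell-1}g)(\nabla\cdot\partial_t^\ell u)\,dx$.

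First I would dispatch the nonlinear piece. Expanding $\partial_t^\ell(u\cdot\nabla g)$ by Leibniz, every summand has at least one factor that can be placed in $L^\infty$ through $H^2(\om)\hookrightarrow\mc^0(\bar\om)$ (using $s\ge4$ and $\mh(s;T)\le\e^2$); after one integration by parts moving the gradient off the worst factor $\partial_t^\ell g$ — the boundary term vanishing because $u\cdot r\equiv0$ on $\partial\om$ — the whole piece is bounded by $C\e\,W(\ell;t)$, exactly as the convective terms are handled in Lemmas \ref{L2.1}--\ref{L2.2}.

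The crux is the linear piece. Integrating by parts, $\into(\partial_t^{\ell-1}g)(\nabla\cdot\partial_t^\ell u)\,dx=-\into\nabla(\partial_t^{\ell-1}g)\cdot\partial_t^\ell u\,dx$, where the boundary term drops because $\partial_t^\ell u\cdot r=0$ on $\partial\om$ (from the compatibility conditions and their propagation in time). Now substitute the momentum equation for $u$ differentiated $\ell-1$ times, $\nabla\partial_t^{\ell-1}g=-\partial_t^\ell u-\partial_t^{\ell-1}(u\cdot\nabla u)+\partial_t^{\ell-1}v-\partial_t^{\ell-1}u$; this turns the linear piece into
\[
\|\partial_t^\ell u\|_{L^2}^2+\into\partial_t^{\ell-1}(u\cdot\nabla u)\cdot\partial_t^\ell u\,dx-\into\partial_t^{\ell-1}v\cdot\partial_t^\ell u\,dx+\into\partial_t^{\ell-1}u\cdot\partial_t^\ell u\,dx.
\]
The convective integral is again $O(\e\,W(\ell;t))$ since $\|\partial_t^{\ell-1}(u\cdot\nabla u)\|_{L^2}\le C\e\|u\|_{\mathfrak{X}^\ell}$, and the last two integrals are estimated by Young's inequality in the sharp form $ab\le\tfrac34a^2+\tfrac13b^2$; adding up the resulting three contributions to $\|\partial_t^\ell u\|_{L^2}^2$ (namely $1+\tfrac13+\tfrac13=\tfrac53\le2$) produces exactly the constants $2$, $\tfrac34$, $\tfrac34$ in the statement, while all remaining error terms are absorbed into $C\e\,W(\ell;t)$.

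The step I expect to require the most care is the endpoint $\ell=s$: there $\partial_t^{\ell+1}g$ and $\nabla\cdot\partial_t^\ell u$ do not literally belong to $L^2(\om)$, so the identity for the time derivative and the subsequent integration by parts must be read in the $H^1$--$H^{-1}$ duality, or — as is standard for estimates of this type — first carried out for smooth approximating solutions and then passed to the limit; since the normal trace $\partial_t^s u\cdot r$ still vanishes on $\partial\om$, no new boundary term appears. For $1\le\ell\le s-1$ every manipulation above is classical.
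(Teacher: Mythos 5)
Your proposal is correct and follows essentially the same route as the paper: rewrite the left-hand side as $-\into(\partial_t^{\ell-1}g)(\partial_t^{\ell+1}g)\,dx$, use the $g$-equation to replace $\partial_t^{\ell+1}g$, absorb the commutator/convective terms into $C\e\,W(\ell;t)$ after an integration by parts whose boundary term vanishes thanks to $u\cdot r\equiv0$, and then close the ``linear'' piece $-\into\nabla(\partial_t^{\ell-1}g)\cdot\partial_t^\ell u\,dx$ by invoking the momentum equation and Young's inequality with $1+\tfrac13+\tfrac13\le 2$. The only cosmetic difference is that you substitute the momentum equation into the integrand before estimating, whereas the paper first applies Cauchy--Schwarz and then bounds $\|\nabla(\partial_t^{\ell-1}g)\|_{L^2}$ via the momentum equation; both yield the same constants, and your remark about reading the $\ell=s$ endpoint in a weak sense or via smooth approximation is the same implicit convention the paper uses.
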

\begin{proof}
Direct computation gives

\begin{align*}
-\frac{d}{dt}\int_\om (\partial_t^{\ell-1} g)( \partial_t^\ell g)\,dx &= -\|\pa_t^\ell g\|_{L^2}^2 -\int_\om (\partial_t^{\ell-1} g )(\partial_t^{\ell+1} g)\,dx\\
&=-\|\pa_t^\ell g\|_{L^2}^2 + \int_\om(\partial_t^{\ell-1} g )  \ \pa_t^\ell (\nabla g \cdot u + \nabla \cdot u)\,dx\\
&= -\|\pa_t^\ell g\|_{L^2}^2 +\sum_{r=0}^{\ell-1} \binom{l}{r} \int_\om (\partial_t^{\ell-1} g ) \nabla (\pa_t^r g) \cdot \pa_t^{\ell-r} u\,dx\\
&\quad + \int_\om (\pa_t^{\ell-1} g) \pa_t^\ell (\nabla \cdot u)\,dx + \int_\om (\partial_t^{\ell-1} g ) \nabla (\pa_t^\ell g)\cdot u\,dx\\
&=  -\|\pa_t^\ell g\|_{L^2}^2 +\sum_{r=0}^{\ell-1} \binom{l}{r} \int_\om \pa_t^{\ell-1} g \nabla (\pa_t^r g) \cdot \pa_t^{\ell-r} u\,dx\\
&\quad - \int_\om \nabla (\pa_t^{\ell-1} g) \cdot \pa_t^\ell  u\,dx -\int_\om \pa_t^\ell g \nabla \cdot (\pa_t^{\ell-1} g \ u) \,dx\\
&\le -\|\pa_t^\ell g\|_{L^2}^2 + C\e  W(\ell;t) + \|\pa_t^\ell u\|_{L^2}\|\nabla (\pa_t^{\ell-1} g)\|_{L^2},
\end{align*}
and from \eqref{A-2}, one gets

\[
\begin{aligned}
\|\nabla(\pa_t^{\ell-1} g)\|_{L^2} &= \|\pa_t^\ell u + \pa_t^{\ell-1} (u \cdot \nabla u) + \pa_t^{\ell-1}(u-v)\|_{L^2}\\
&\le \|\pa_t^\ell u\|_{L^2} + \|\pa_t^{\ell-1} u\|_{L^2} + \|\pa_t^{\ell-1} v\|_{L^2} + C\e  \sqrt{W(\ell;t)},
\end{aligned}
\]
where $C=C(\ell,s.\om)$ is independent of $T$. Thus, we use Young's inequality to obtain

\[
\begin{aligned}
-\frac{d}{dt}\int_\om( \partial_t^{\ell-1} g) (\partial_t^\ell g)\,dx &\le -\|\pa_t^\ell g\|_{L^2}^2 + C\e  W(\ell;t) \\
&\quad + \|\pa_t^\ell u\|_{L^2}\lt( \|\pa_t^\ell u\|_{L^2} + \|\pa_t^{\ell-1} u\|_{L^2} + \|\pa_t^{\ell-1} v\|_{L^2} + C\e  \sqrt{W(\ell;t)} \rt)\\
&\le -\|\pa_t^\ell g\|_{L^2}^2 + C\e  W(\ell;t)+ 2\|\pa_t^\ell u\|_{L^2}^2 + \frac34\|\pa_t^{\ell-1} u\|_{L^2}^2 + \frac34 \|\pa_t^{\ell-1} v\|_{L^2}^2,
\end{aligned}
\]
which implies the desired estimate.
\end{proof}
The next step is to estimate the curl of $u$.

\begin{lemma}\label{L2.4}
For $T>0$, suppose that $\mh(s;T)\le \e ^2 \ll 1$ for a sufficiently small $\e >0$. Then for each multi-index $\alpha$ with $|\alpha| = \ell-1$ and $1 \le \ell \le s$  and $0<t \le T$,
\[
\frac{d}{dt}\|\nabla_{t,x}^\alpha \omega\|_{L^2}^2 + \frac32 \|\nabla_{t,x}^\alpha\omega\|_{L^2}^2 \le C\e  W(\ell;t) + 2\|\nabla_{t,x}^\alpha \nu\|_{L^2}^2,
\]
where $\nu := \nabla \times v$ and $C=C(\ell,s,\om)$ is independent of $T$.
\end{lemma}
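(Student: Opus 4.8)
The plan is to pass to the vorticity formulation of the momentum equation for $u$ in \eqref{A-2} and to run an energy estimate that mirrors the one for $\pa_t^\ell u$ in Lemma \ref{L2.2}. Taking the curl of $\pa_t u + u\cdot\nabla u + \nabla g = v-u$, using $\nabla\times\nabla g\equiv 0$ together with the identity $\nabla\times(u\cdot\nabla u) = u\cdot\nabla\omega + (\nabla\cdot u)\,\omega - \omega\cdot\nabla u$ (which exploits $\nabla\cdot\omega = 0$), one obtains the transport equation
\[
\pa_t\omega + u\cdot\nabla\omega = \omega\cdot\nabla u - (\nabla\cdot u)\,\omega + \nu - \omega .
\]
The point is that this equation carries exactly the same linear damping $-\omega$ and the same linear forcing $\nu$ as the $u$-equation, which is what will eventually produce the $-\tfrac32\|\cdot\|_{L^2}^2$ and $+2\|\cdot\|_{L^2}^2$ in the assertion.

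Next I would apply $\nabla_{t,x}^\alpha$ with $|\alpha| = \ell-1$ to this equation, take the $L^2(\om)$ inner product with $\nabla_{t,x}^\alpha\omega$, and sort out the resulting terms. The leading transport contribution $\into (u\cdot\nabla\nabla_{t,x}^\alpha\omega)\cdot\nabla_{t,x}^\alpha\omega\,dx$ is integrated by parts; the boundary integral $\tfrac12\int_{\pa\om}(u\cdot r)\,|\nabla_{t,x}^\alpha\omega|^2\,d\sigma$ vanishes because $u\cdot r\equiv 0$ on $\pa\om$ (and, since $r$ is time-independent, the time-differentiated kinematic conditions $\pa_t^k u\cdot r\equiv 0$ on $\pa\om$ handle any further integration by parts needed in the lower-order pieces), while the interior remainder $-\tfrac12\into(\nabla\cdot u)\,|\nabla_{t,x}^\alpha\omega|^2\,dx$ is bounded by $\|\nabla\cdot u\|_{L^\infty}\|\nabla_{t,x}^\alpha\omega\|_{L^2}^2\le C\e\,W(\ell;t)$ using $H^2(\om)\hookrightarrow\mc^0(\bar\om)$, $\mh(s;T)\le\e^2$, and the fact that $\|\nabla_{t,x}^\alpha\omega\|_{L^2}\le\|u\|_{\mathfrak{X}^\ell}$ (the curl costs one spatial derivative). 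The commutator $[\nabla_{t,x}^\alpha,\,u\cdot\nabla]\omega$ paired with $\nabla_{t,x}^\alpha\omega$, and the terms coming from $\nabla_{t,x}^\alpha\big(\omega\cdot\nabla u - (\nabla\cdot u)\omega\big)$ paired with $\nabla_{t,x}^\alpha\omega$, are all at least quadratic in $(u,\omega)$ with one factor a low-order derivative of $u$ that can be placed in $L^\infty$ (hence $O(\e)$ via $H^s\hookrightarrow W^{2,\infty}$, $s\ge4$) and the other factors controlled in $L^2$ by $W(\ell;t)^{1/2}$; by the Moser-type product inequalities, exactly as in Lemmas \ref{L2.2}--\ref{L2.3}, they are therefore $\le C\e\,W(\ell;t)$. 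Finally the damping term gives $-\|\nabla_{t,x}^\alpha\omega\|_{L^2}^2$, and the source term is controlled by Young's inequality, $\into\nabla_{t,x}^\alpha\nu\cdot\nabla_{t,x}^\alpha\omega\,dx\le\tfrac18\|\nabla_{t,x}^\alpha\omega\|_{L^2}^2+2\|\nabla_{t,x}^\alpha\nu\|_{L^2}^2$.

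Collecting these bounds yields $\tfrac{d}{dt}\tfrac12\|\nabla_{t,x}^\alpha\omega\|_{L^2}^2 + \|\nabla_{t,x}^\alpha\omega\|_{L^2}^2\le\tfrac18\|\nabla_{t,x}^\alpha\omega\|_{L^2}^2+2\|\nabla_{t,x}^\alpha\nu\|_{L^2}^2+C\e\,W(\ell;t)$; absorbing the $\tfrac18$-term into the damping and multiplying through by $2$ (which only changes the generic constant $C$) gives the claimed inequality. I expect the genuine difficulty to lie in the top-order case $\ell=s$, where $\nabla_{t,x}^\alpha\omega$ already sits at the top of the $\mathfrak{X}^s$-scale for $u$: one must verify that \emph{every} nonlinear term truly peels off a lower-order, and hence $O(\e)$, factor and never asks for a norm of $u$ strictly above $\mathfrak{X}^s$. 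This is precisely where passing through the vector identity is essential, since it trades the dangerous $\nabla\times(u\cdot\nabla u)$ (which naively would cost $s+1$ derivatives of $u$) for the genuinely quadratic, divergence-aware terms $\omega\cdot\nabla u$ and $(\nabla\cdot u)\omega$. The mixed space--time nature of $\nabla_{t,x}^\alpha$ is a minor bookkeeping matter, handled as above by the time-differentiated kinematic condition.
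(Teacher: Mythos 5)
Your proposal is correct and follows the same route as the paper: take the curl of the momentum equation for $u$ (so that $\nabla g$ drops out), apply $\nabla_{t,x}^\alpha$, test against $\nabla_{t,x}^\alpha\omega$, integrate the top-order transport term by parts using $u\cdot r\equiv 0$ on $\pa\om$, bound all commutator and quadratic $(\nabla\cdot u)\omega$, $\omega\cdot\nabla u$ contributions by $C\e\,W(\ell;t)$ via the smallness $\mh(s;T)\le\e^2$ and Sobolev embedding, keep $-\|\nabla_{t,x}^\alpha\omega\|_{L^2}^2$ from the drag, and close with Young's inequality on the $\nu$-term. The only small discrepancy is arithmetic: with your Young split $\int\nu\cdot\omega\le\tfrac18\|\omega\|^2+2\|\nu\|^2$, multiplying the final inequality by $2$ gives $+4\|\nabla_{t,x}^\alpha\nu\|_{L^2}^2$ rather than the stated $+2\|\nabla_{t,x}^\alpha\nu\|_{L^2}^2$; the paper instead uses $\int\nu\cdot\omega\le\tfrac14\|\omega\|^2+\|\nu\|^2$ so that doubling lands exactly on the advertised constants. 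This is cosmetic (the $\|\nu\|^2$ term is later multiplied by a free small parameter anyway), but note that the coefficient of $\|\nu\|^2$ is \emph{not} a generic constant, so ``absorbing into $C$'' is not quite the right justification.

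You also correctly include the $(\nabla\cdot u)\,\omega$ term in the vorticity equation, which the paper's displayed equation omits (and the sign it shows on $\omega\cdot\nabla u$ does not match the standard identity $\nabla\times(u\cdot\nabla u)=u\cdot\nabla\omega-\omega\cdot\nabla u+(\nabla\cdot u)\omega$). Since all such terms are quadratic in $u$ and are bounded identically by $C\e\,W(\ell;t)$, this affects nothing in the estimate, but your version of the transport equation is the more careful one.
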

\begin{proof}
We apply $\nabla \times$ to \eqref{A-2}$_2$ to get

\[
\pa_t \omega + u \cdot \nabla \omega + \omega \cdot \nabla u = \nu-\omega,
\]
Thus, straightforward computation yields

\begin{align*}
\frac12\|\nabla_{t,x}^\alpha \omega\|_{L^2}^2 &= -\int_\om u \cdot \nabla (\nabla_{t,x}^\alpha \omega) \cdot \nabla_{t,x}^\alpha \omega\,dx -\int_\om \lt[\nabla_{t,x}^\alpha (u \cdot \nabla \omega) -u \cdot \nabla (\nabla_{t,x}^\alpha \omega)\rt] \cdot \nabla_{t,x}^\alpha \omega\,dx\\
&\quad - \int_\om \nabla_{t,x}^\alpha (\omega\cdot \nabla u)\cdot \nabla_{t,x}^\alpha \omega \,dx- \int_\om \nabla_{t,x}^\alpha (\omega-\nu)\cdot \nabla_{t,x}^\alpha \omega\,dx\\
&\le C\e  W(\ell;t) -\frac34\|\nabla_{t,x}^\alpha \omega\|_{L^2}^2 + \|\nabla_{t,x}^\alpha \nu\|_{L^2}^2,
\end{align*}
which concludes the proof.
\end{proof}

Now, we provide the estimates for the Navier-Stokes part $\eqref{A-2}_3$-$\eqref{A-2}_4$.

\begin{lemma}\label{L2.5}
For $T>0$, suppose that $\mh(s;T)\le \e ^2 \ll 1$ for a sufficiently small $\e >0$ satisfying
\[
\sup_{0\le t \le T} \|h(t)\|_{L^\infty} \le \frac12, \quad \sup_{0 \le t \le T} \|g(t)\|_{L^\infty} \le \log2.
\]
Then for each  $1 \le \ell\le s$ and $0<t \le T$,
\[
\frac{d}{dt}\lt(\| v\|_{\mathfrak{X}^\ell}^2 +\gamma \| h\|_{\mathfrak{X}^\ell}^2\rt) + \frac12\lt(\| v\|_{\mathfrak{X}^\ell}^2 + \| \nabla v\|_{\mathfrak{X}^\ell}^2\rt)\le C\e  \mathcal{S}(\ell;t) + C\e W(\ell;t) + CW(\ell-1;t),
\]
where $C=C(\ell,s,\gamma,\om)$ is independent of $T$.
\end{lemma}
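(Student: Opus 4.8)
I would run weighted higher--order energy estimates on the Navier--Stokes block $\eqref{A-2}_3$--$\eqref{A-2}_4$. Because $v\equiv 0$ on $\pa\om$, this block tolerates ordinary $H^s$--type estimates, with none of the curl/tangential complications that forced the time--derivative detour in the Euler block. The plan is, for each mixed operator $\nabla_{t,x}^\alpha$ with $|\alpha|\le\ell$, to apply $\nabla_{t,x}^\alpha$ to $\eqref{A-2}_4$ and test against $\nabla_{t,x}^\alpha v$, to apply $\nabla_{t,x}^\alpha$ to $\eqref{A-2}_3$ and test against the weighted multiplier $\gamma(1+h)^{\gamma-3}\nabla_{t,x}^\alpha h$, then add and sum over $|\alpha|\le\ell$. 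This produces the energy
\[
\mathcal{E}_\ell(t):=\sum_{|\alpha|\le\ell}\Big(\tfrac12\|\nabla_{t,x}^\alpha v\|_{L^2}^2+\tfrac{\gamma}{2}\into(1+h)^{\gamma-3}|\nabla_{t,x}^\alpha h|^2\,dx\Big),
\]
which, since $\tfrac12\le 1+h\le\tfrac32$ under the hypotheses, satisfies $\mathcal{E}_\ell(t)\approx\|v(t)\|_{\mathfrak{X}^\ell}^2+\gamma\|h(t)\|_{\mathfrak{X}^\ell}^2$ on $[0,T]$; the only obstruction to replacing $\tfrac{d}{dt}\mathcal{E}_\ell$ by $\tfrac{d}{dt}(\|v\|_{\mathfrak{X}^\ell}^2+\gamma\|h\|_{\mathfrak{X}^\ell}^2)$ is $\pa_t(1+h)^{\gamma-3}$, which is $O(\e)$ in $L^\infty$ after inserting $\eqref{A-2}_3$, and that is absorbed into $C\e\mathcal{S}(\ell;t)+C\e\|\nabla v\|_{\mathfrak{X}^\ell}^2$ (the second piece reabsorbed below).

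\emph{Dissipation and the pressure cancellation.} The viscous term $-\tfrac{1}{1+h}\Delta v$ gives, after integration by parts, $\sum_{|\alpha|\le\ell}\into\tfrac{1}{1+h}|\nabla\nabla_{t,x}^\alpha v|^2\,dx\ge\tfrac23\|\nabla v\|_{\mathfrak{X}^\ell}^2$ up to a term $\le C\|\nabla h\|_{L^\infty}\|v\|_{\mathfrak{X}^\ell}\|\nabla v\|_{\mathfrak{X}^\ell}\le C\e(\|v\|_{\mathfrak X^\ell}^2+\|\nabla v\|_{\mathfrak X^\ell}^2)$ from differentiating $(1+h)^{-1}$; the diagonal part of the drag $-\tfrac{e^g}{1+h}v$ gives $-\sum_{|\alpha|\le\ell}\into\tfrac{e^g}{1+h}|\nabla_{t,x}^\alpha v|^2\,dx\le-\tfrac13\|v\|_{\mathfrak{X}^\ell}^2$ since $e^g\ge\tfrac12$, $1+h\le\tfrac32$. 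Together with the Poincar\'e inequality---legitimate because $v$ and all of its time and tangential derivatives vanish on $\pa\om$, so that in fact $\|v\|_{\mathfrak{X}^\ell}\lesssim\|\nabla v\|_{\mathfrak{X}^\ell}$---these yield the full left--hand dissipation $\tfrac12(\|v\|_{\mathfrak{X}^\ell}^2+\|\nabla v\|_{\mathfrak{X}^\ell}^2)$. The key structural point is that the pressure term $\tfrac{\nabla p(1+h)}{1+h}=\gamma(1+h)^{\gamma-2}\nabla h$ in $\eqref{A-2}_4$ is skew against the compression term $(1+h)\nabla\cdot v$ in $\eqref{A-2}_3$: integrating by parts in the $v$--test produces the top--order cross term $-\gamma\into(1+h)^{\gamma-2}(\nabla\cdot\nabla_{t,x}^\alpha v)\,\nabla_{t,x}^\alpha h\,dx$, while the compression term in the $h$--test---weighted by $\gamma(1+h)^{\gamma-3}$ precisely so that $\gamma(1+h)^{\gamma-3}(1+h)=\gamma(1+h)^{\gamma-2}$---produces exactly its opposite, and the two cancel. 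This is why $h$ enters only under $\tfrac{d}{dt}$, weighted by $\gamma$, with no dissipation of its own, and it is the reason the coefficient on $\|h\|_{\mathfrak X^\ell}^2$ is $\gamma$.

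\emph{Perturbative terms and the main obstacle.} All remaining contributions---the convective terms $v\cdot\nabla v$, $\nabla\cdot(hv)$, $v\cdot\nabla h$; the commutators $[\nabla_{t,x}^\alpha,(1+h)^{-1}]\Delta v$, $[\nabla_{t,x}^\alpha,(1+h)^{\gamma-2}]\nabla h$, $[\nabla_{t,x}^\alpha,e^g(1+h)^{-1}](v-u)$ and their temporal analogues; and the sub--top--order remainder of the pressure/compression cancellation---carry a factor among $g,h,v$ or a low--order (hence $L^\infty$--bounded via $H^s\hookrightarrow\mc^0$, $s\ge4$) derivative thereof, so Moser--type product and commutator estimates bound them by $C\e(\mathcal{S}(\ell;t)+W(\ell;t))$, the $O(\e)\|\nabla v\|_{\mathfrak{X}^\ell}^2$ pieces being reabsorbed into the viscous dissipation. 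The one genuinely delicate term is the off--diagonal drag contribution $\sum_{|\alpha|\le\ell}\into\tfrac{e^g}{1+h}\,\nabla_{t,x}^\alpha u\cdot\nabla_{t,x}^\alpha v\,dx$, which couples to the Euler velocity at the same order with an $O(1)$---not $O(\e)$---coefficient; a crude bound costs $CW(\ell;t)$, which the overall scheme cannot afford, and handling it is exactly the ``careful estimate of $u$ in the drag force'' referred to in the introduction. I would transfer a derivative onto the factor that the viscous dissipation controls: when $\alpha$ contains a spatial derivative, write $\nabla_{t,x}^\alpha=\pa_{x_i}\nabla_{t,x}^{\alpha'}$ and integrate by parts in $x_i$, so the derivative lands on $\tfrac{e^g}{1+h}\nabla_{t,x}^\alpha v$ (and $\pa_{x_i}\nabla_{t,x}^\alpha v$ is controlled by $\|\nabla v\|_{\mathfrak{X}^\ell}$, absorbable after Young's inequality), leaving the $u$--factor $\nabla_{t,x}^{\alpha'}u$ of order $\le\ell-1$, hence $\le\sqrt{W(\ell-1;t)}$, while the derivative hitting the coefficient produces an $O(\e)$ gain; the boundary integral is controlled by a standard localization near $\pa\om$ using $v|_{\pa\om}\equiv0$. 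For the purely temporal index $\alpha=(\ell,0,0,0)$ one instead invokes $\eqref{A-2}_2$ to write $\pa_t^\ell u=-\nabla\pa_t^{\ell-1}g-\pa_t^{\ell-1}(u\cdot\nabla u)+\pa_t^{\ell-1}(v-u)$: the convective part is $O(\e)$, the $\pa_t^{\ell-1}v$ part is $\lesssim\|\nabla v\|_{\mathfrak{X}^\ell}$ by Poincar\'e, and one further integration by parts moves $\nabla$ off $\pa_t^{\ell-1}g$ (the boundary term vanishing since $\pa_t^\ell v|_{\pa\om}=0$, the interior one leaving $\pa_t^{\ell-1}g$ at order $\le\ell-1$). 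Altogether the off--diagonal term is $\le\delta\|\nabla v\|_{\mathfrak{X}^\ell}^2+C_\delta W(\ell-1;t)+C\e(\mathcal{S}(\ell;t)+W(\ell;t))$ with $\delta$ small; summing over $|\alpha|\le\ell$, collecting the coercive terms, and absorbing the $O(\e)$ gradient pieces then gives the stated inequality. The hard part throughout is precisely this accounting of the $u$--coupling in the drag force together with the verification that every variable--coefficient commutator is genuinely $O(\e)$, which is why the full proof is placed in Appendix \ref{app.A}.
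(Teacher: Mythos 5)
Your overall strategy is the same as the paper's: apply $\nabla_{t,x}^\alpha$ to $\eqref{A-2}_4$, test against $\nabla_{t,x}^\alpha v$; apply $\nabla_{t,x}^\alpha$ to $\eqref{A-2}_3$, test against a $\gamma$-weighted $\nabla_{t,x}^\alpha h$; extract coercivity from the diagonal drag $-\tfrac{e^g}{1+h}v$ and the viscosity; and handle the off-diagonal drag term $\int\tfrac{e^g}{1+h}\nabla_{t,x}^\alpha u\cdot\nabla_{t,x}^\alpha v$ by the same split the paper uses — transfer a spatial derivative off $u$ onto the $v$-side when one is available, and substitute the Euler momentum equation $\pa_t u=-u\cdot\nabla u-\nabla g-(u-v)$ in the purely temporal case, which is exactly how $W(\ell;t)$ gets downgraded to $W(\ell-1;t)+\text{absorbable}$. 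You have correctly identified this as the crux.

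The one substantive departure is your choice of multiplier for the $h$-equation. You use the weighted $\gamma(1+h)^{\gamma-3}\nabla_{t,x}^\alpha h$ so that the top-order pressure and compression contributions cancel exactly, and you run a weighted energy $\mathcal{E}_\ell$. The paper instead tests against the unweighted $\gamma\nabla_{t,x}^\alpha h$, which is why the statement involves literally $\|v\|_{\mathfrak{X}^\ell}^2+\gamma\|h\|_{\mathfrak{X}^\ell}^2$; the resulting mismatch $\gamma\int\bigl((1+h)^{\gamma-2}-(1+h)\bigr)\nabla\cdot(\nabla_{t,x}^\alpha v)\,\nabla_{t,x}^\alpha h\,dx$ is $O(\|h\|_{L^\infty})\|h\|_{\mathfrak{X}^\ell}\|\nabla v\|_{\mathfrak{X}^\ell}$ and is absorbed by Young's inequality into the viscous dissipation and $C\e\mathcal{S}(\ell;t)$. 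Your route buys exact top-order cancellation, at the cost of a post hoc comparison between $\tfrac{d}{dt}\mathcal{E}_\ell$ and $\tfrac{d}{dt}(\|v\|_{\mathfrak{X}^\ell}^2+\gamma\|h\|_{\mathfrak{X}^\ell}^2)$. On that comparison you are slightly imprecise: the product rule produces not just the $\pa_t(1+h)^{\gamma-3}$ term you cite, but also $2\gamma\int\bigl((1+h)^{\gamma-3}-1\bigr)\nabla_{t,x}^\alpha h\,\pa_t\nabla_{t,x}^\alpha h\,dx$; at top order $|\alpha|=\ell$ the factor $\pa_t\nabla_{t,x}^\alpha h$ is of order $\ell+1$ and must be rewritten through $\eqref{A-2}_3$ as $-\nabla\cdot(\nabla_{t,x}^\alpha((1+h)v))+\cdots$, whose leading part is controlled by $\|\nabla v\|_{\mathfrak{X}^\ell}$ and then absorbed. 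That is a bookkeeping gap, not a wrong idea. One further shared blemish with the paper: the spatial integration by parts on $\int\tfrac{e^g}{1+h}\nabla^\alpha u\cdot\nabla^\alpha v$ produces a boundary integral that is not manifestly zero for purely spatial $\alpha$ (only $v$, not its normal derivatives, vanishes on $\pa\om$); you flag this and wave at localization, while the paper silently drops it, so you are at least no worse off there.
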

\begin{proof}
Since the proof is lengthy and technical, we postpone it to Appendix A.
\end{proof}

Next, we investigate the dissipation estimates for $\nabla_{t,x}^\alpha h$.

\begin{lemma}\label{L2.6}
For $T>0$, suppose that $\mh(s;T)\le \e ^2 \ll 1$ for a sufficiently small $\e >0$ satisfying
\[
\gamma \min\lt\{ \lt(1+\mathfrak{h}\rt)^{\gamma-2}, \lt(1-\mathfrak{h}\rt)^{\gamma-2} \rt\} \ge \frac12, \quad \sup_{0 \le t \le T} \|g(t)\|_{L^\infty} \le \log2,
\]
where $\mathfrak{h}=\sup_{0\le t \le T} \|h(t)\|_{L^\infty}$. Then for each  $1 \le \ell\le s$ and $0<t \le T$,
\[
\begin{aligned}
\frac{d}{dt}&\lt( \sum_{|\alpha|\le \ell-1}\int_\om \nabla(\nabla^\alpha h)\cdot \nabla^\alpha v\,dx -\sum_{|\alpha|+m=\ell, \ m\ge 1} \int_\om \pa_t^{m-1}(\nabla^\alpha h) \pa_t^m (\nabla^\alpha h)\,dx\rt) + \frac14 \|h\|_{\mathfrak{X}^\ell}^2\\
&\le C\e   \mathcal{S}(\ell;t) + C\e  W(\ell;t) + C\lt(\|h\|_{\mathfrak{X}^{\ell-1}}^2 + \|u\|_{\mathfrak{X}^{\ell-1}}^2 + \|v\|_{\mathfrak{X}^\ell}^2+ \|\nabla v\|_{\mathfrak{X}^\ell}^2\rt),
\end{aligned}
\]
where $C=C(\ell,s,\gamma,\om)$ is independent of $T$.
\end{lemma}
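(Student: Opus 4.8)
The plan is to reproduce, at the level of $h$, the mechanism that produced the dissipation for $g$ in Lemma \ref{L2.3}: the role that $\nabla g$ played in $\eqref{A-2}_2$ is now played by the pressure term $\tfrac{\nabla p(1+h)}{1+h}$ in $\eqref{A-2}_4$, and the loss of one derivative is compensated by the parabolic gain contained in $\|\nabla v\|_{\mathfrak{X}^\ell}$, which is available on the right-hand side. One differentiates the functional in time, substitutes $\partial_t h=-\nabla\cdot((1+h)v)$ from $\eqref{A-2}_3$ and $\partial_t v$ from $\eqref{A-2}_4$, and checks that the first sum is designed to generate $-\|\nabla(\nabla^\alpha h)\|_{L^2}^2$ while the second generates $-\|\partial_t^m(\nabla^\alpha h)\|_{L^2}^2$.

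For the first sum, after time-differentiation and substitution of $\partial_t v$, the decisive contribution is the pressure one, $-\int_\Omega\nabla(\nabla^\alpha h)\cdot\nabla^\alpha\big(\tfrac{\nabla p(1+h)}{1+h}\big)\,dx$. Since $\tfrac{\nabla p(1+h)}{1+h}=\gamma(1+h)^{\gamma-2}\nabla h$, the top-order part of $\nabla^\alpha$ of this is $\gamma(1+h)^{\gamma-2}\nabla(\nabla^\alpha h)$, whence this term contributes $-\gamma\int_\Omega(1+h)^{\gamma-2}|\nabla(\nabla^\alpha h)|^2\,dx\le-\tfrac12\|\nabla(\nabla^\alpha h)\|_{L^2}^2$ thanks to the hypothesis $\gamma\min\{(1+\mathfrak{h})^{\gamma-2},(1-\mathfrak{h})^{\gamma-2}\}\ge\tfrac12$ together with $\|h(t)\|_{L^\infty}\le\mathfrak{h}$ and the monotonicity of $s\mapsto(1+s)^{\gamma-2}$; the remaining commutator terms each carry a factor $\nabla h$ of size $O(\e)$ and go into $C\e\,\mathcal{S}(\ell;t)$. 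The viscosity term $\tfrac{\Delta v}{1+h}$ is handled by Cauchy–Schwarz and Young's inequality, needing only $v\in H^{\ell+1}$, which is bounded by $\|v\|_{\mathfrak{X}^\ell}+\|\nabla v\|_{\mathfrak{X}^\ell}$; a small fraction of $\|\nabla(\nabla^\alpha h)\|_{L^2}^2$ is sacrificed and the rest kept as $C(\|v\|_{\mathfrak{X}^\ell}^2+\|\nabla v\|_{\mathfrak{X}^\ell}^2)$. The drag term $\tfrac{e^g}{1+h}(v-u)$, whose coefficient is bounded since $e^g\le 2$ and $1+h\ge 1-\mathfrak{h}>0$, is treated the same way and yields precisely the $C\|v\|_{\mathfrak{X}^\ell}^2$ and $C\|u\|_{\mathfrak{X}^{\ell-1}}^2$ on the right-hand side, while the convective term $v\cdot\nabla v$ is $O(\e)$. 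Finally the piece $\int_\Omega\nabla(\nabla^\alpha\partial_t h)\cdot\nabla^\alpha v\,dx$ is integrated by parts once to put the outer gradient on $\nabla^\alpha v$; inserting $\partial_t h=-\nabla\cdot((1+h)v)$, the interior part is $\lesssim\|v\|_{\mathfrak{X}^\ell}^2+\e\|h\|_{\mathfrak{X}^\ell}^2$, because the only term carrying $\ell$ derivatives on $h$ is multiplied by $v\in L^\infty$.

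For the second sum, $-\tfrac{d}{dt}\int_\Omega\partial_t^{m-1}(\nabla^\alpha h)\,\partial_t^m(\nabla^\alpha h)\,dx=-\|\partial_t^m(\nabla^\alpha h)\|_{L^2}^2-\int_\Omega\partial_t^{m-1}(\nabla^\alpha h)\,\partial_t^{m+1}(\nabla^\alpha h)\,dx$; in the last integral one replaces $\partial_t^{m+1}h=-\partial_t^m\nabla\cdot((1+h)v)$, integrates by parts once, and bounds the interior term by $\delta\|h\|_{\mathfrak{X}^\ell}^2+C_\delta\big(\|v\|_{\mathfrak{X}^\ell}^2+\e\|h\|_{\mathfrak{X}^\ell}^2\big)$ via Young's inequality, noting that $\partial_t^{m-1}\nabla^{\alpha+1}h$ has total order $\ell$ and $\partial_t^m\nabla^\alpha v$ order $\le\ell$. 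Summing over all admissible $(\alpha,m)$, the accumulated dissipation $\sum_{|\alpha|\le\ell-1}\|\nabla(\nabla^\alpha h)\|_{L^2}^2+\sum_{|\alpha|+m=\ell,\,m\ge1}\|\partial_t^m(\nabla^\alpha h)\|_{L^2}^2$ controls $c_0\|h\|_{\mathfrak{X}^\ell}^2$ for some $c_0>0$: each $\partial_t^k h$ has zero spatial mean — for $k=0$ by the definition of $n_c$, and for $k\ge1$ because $\partial_t h=-\nabla\cdot((1+h)v)$ with $v|_{\partial\Omega}=0$ — so the Poincaré inequality upgrades $\|\nabla(\partial_t^k\nabla^\beta h)\|_{L^2}$ to the full $\|\partial_t^k\nabla^\beta h\|_{H^1}$. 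Choosing $\delta$ small to absorb, and invoking Lemma \ref{L2.1} wherever a lower-order $W$-norm appears, produces the asserted inequality with constant $\tfrac14$.

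The main obstacle is the family of boundary integrals created by the integrations by parts above, typically of the form $\int_{\partial\Omega}\nabla^\alpha(\partial_t h)\,(\nabla^\alpha v\cdot r)\,d\sigma$ and its analogue $\int_{\partial\Omega}\partial_t^{m-1}(\nabla^\alpha h)\,(\nabla^\alpha\partial_t^m((1+h)v)\cdot r)\,d\sigma$ for the second sum. These do not vanish identically since $\nabla^\alpha$ may contain normal derivatives. They are controlled by combining the trace inequality $\|\phi\|_{L^2(\partial\Omega)}^2\le C\|\phi\|_{L^2(\Omega)}\|\phi\|_{H^1(\Omega)}$ with a tangential/normal splitting near $\partial\Omega$: since $v$ — hence $(1+h)v$ and, by the compatibility conditions, every $\partial_t^k v$ — vanishes on $\partial\Omega$, all purely tangential derivatives of these quantities vanish there, so only the normal-derivative components survive and are estimated by interior norms, the extra half derivative of $v$ once more coming from $\|\nabla v\|_{\mathfrak{X}^\ell}$; this yields $\le\delta\|h\|_{\mathfrak{X}^\ell}^2+C\big(\|v\|_{\mathfrak{X}^\ell}^2+\|\nabla v\|_{\mathfrak{X}^\ell}^2\big)+C\e\|h\|_{\mathfrak{X}^\ell}^2$. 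This step is carried out along the lines of the boundary estimates in \cite{Zh10}.
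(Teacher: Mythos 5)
Your overall strategy matches the paper's: time-differentiate the same auxiliary cross terms, substitute $\partial_t h=-\nabla\cdot((1+h)v)$ and $\partial_t v$ from $\eqref{A-2}_4$, extract the main dissipation $-\gamma\int_\Omega(1+h)^{\gamma-2}|\nabla(\nabla^\alpha h)|^2\,dx\le-\tfrac12\|\nabla(\nabla^\alpha h)\|_{L^2}^2$ from the pressure term, absorb the viscosity and drag contributions into $\|v\|_{\mathfrak{X}^\ell}^2$, $\|\nabla v\|_{\mathfrak{X}^\ell}^2$, $\|u\|_{\mathfrak{X}^{\ell-1}}^2$, and let the second sum produce $-\|\partial_t^m(\nabla^\alpha h)\|_{L^2}^2$. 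This is the paper's Lemma~\ref{L2.6} argument in all essentials. That said, two points need correction.

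First, your Poincar\'e step overreaches. Zero mean of $\partial_t^k h$ (true, by conservation and $v|_{\partial\Omega}=0$) gives $\|\partial_t^k h\|_{L^2}\le C\|\nabla\partial_t^k h\|_{L^2}$, but it does \emph{not} give $\|\partial_t^k\nabla^\beta h\|_{L^2}\le C\|\nabla\partial_t^k\nabla^\beta h\|_{L^2}$ for $|\beta|\ge1$, since $\int_\Omega\partial_t^k\nabla^\beta h\,dx=\int_{\partial\Omega}\partial_t^k\nabla^{\beta-e_i}h\,r_i\,d\sigma$ is not zero in general. As a consequence, the accumulated dissipation $\sum_{|\alpha|\le\ell-1}\|\nabla(\nabla^\alpha h)\|_{L^2}^2+\sum_{|\alpha|+m=\ell,\,m\ge1}\|\partial_t^m(\nabla^\alpha h)\|_{L^2}^2$ does \emph{not} control $c_0\|h\|_{\mathfrak{X}^\ell}^2$. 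Concretely for $\ell=3$, the dissipation contains $\|\nabla^2\partial_t h\|_{L^2}^2$ but neither $\|\partial_t h\|_{L^2}^2$ nor $\|\nabla\partial_t h\|_{L^2}^2$, and Poincar\'e cannot produce $\|\nabla\partial_t h\|_{L^2}^2$ from $\|\nabla^2\partial_t h\|_{L^2}^2$. The fix is exactly what the stated inequality already allows: the dissipation controls the top-order part $\|h\|_{H^\ell}^2+\sum_{k\ge1}\|\partial_t^k\nabla^{\ell-k}h\|_{L^2}^2$ (here using Poincar\'e only once, on $h$ itself, which the paper does in \eqref{L2-6.5}), and the remaining intermediate-order pieces are bounded by $C\|h\|_{\mathfrak{X}^{\ell-1}}^2$ and moved to the right-hand side, where that term is already present.

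Second, the boundary-term paragraph raises a genuine worry (the paper's proof drops these boundary integrals without comment), but your sketched resolution is not yet convincing. For the term $\int_{\partial\Omega}\nabla^\alpha(\partial_t h)\,(\nabla^\alpha v\cdot r)\,d\sigma$ with $|\alpha|=\ell-1$, a trace bound on $\nabla^\alpha(\partial_t h)$ costs $\|\partial_t h\|_{H^\ell}$, which is \emph{not} part of $\|h\|_{\mathfrak{X}^\ell}$ (that space only affords $\|\partial_t h\|_{H^{\ell-1}}$), so the estimate as written does not close in terms of the declared norms. If you want to pursue this, you should first substitute $\partial_t h=-\nabla\cdot((1+h)v)$ so that the boundary integrand becomes purely a function of $v$ and $h$ with at most $\ell+1$ spatial derivatives of $v$, which \emph{is} controlled by $\|v\|_{\mathfrak{X}^\ell}+\|\nabla v\|_{\mathfrak{X}^\ell}$; the tangential/normal splitting and trace inequalities then at least have a chance to close, but the bookkeeping needs to be carried out explicitly rather than asserted.
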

\begin{proof}
First, for $\alpha$ and $m$ satisfying $|\alpha|+m=\ell$, $m\ge 1$ and $1 \le \ell \le s$, we get

\begin{align}
\begin{aligned}\label{L2-6.1}
-\frac{d}{dt}&\int_\om\pa_t^{m-1} (\nabla^\alpha h) \pa_t^m (\nabla^\alpha h)\,dx \\
&= -\|\pa_t^m (\nabla^\alpha h)\|_{L^2}^2 -\int_\om \pa_t^{m-1} (\nabla^\alpha h)\pa_t^{m+1}(\nabla^\alpha h)\,dx\\
&= -\|\pa_t^m (\nabla^\alpha h)\|_{L^2}^2 + \int_\om \pa_t^{m-1} (\nabla^\alpha h) \pa_t^m (\nabla^\alpha ( \nabla\cdot((1+h)v)))\,dx\\
&=  -\|\pa_t^m (\nabla^\alpha h)\|_{L^2}^2 -\int_\om \nabla(\pa_t^{m-1}(\nabla^\alpha h) \cdot \pa_t^m (\nabla^\alpha((1+h)v))\,dx\\
&\le  -\|\pa_t^m (\nabla^\alpha h)\|_{L^2}^2 +C\e   \mathcal{S}(\ell;t) + \int_\om \pa_t^{m-1}(\nabla^\alpha h)\nabla\cdot(\pa_t^m(\nabla^\alpha v))\,dx\\
&\le  -\|\pa_t^m (\nabla^\alpha h)\|_{L^2}^2 +C\e   \mathcal{S}(\ell;t) + \|\pa_t^{m-1}(\nabla^\alpha h)\|_{L^2}\|\nabla(\pa_t^m(\nabla^\alpha v))\|_{L^2},
\end{aligned}
\end{align}
where $C=C(\ell,s,\om)$ is independent of $T$. Then we sum relation \eqref{L2-6.1} over $m$ and $k$ to get
\bq\label{L2-6.2}
\begin{aligned}
-\frac{d}{dt}&\sum_{|\alpha|+m=\ell, \ m \ge 1}\int_\om \pa_t^{m-1}(\nabla^\alpha h)\pa_t^m(\nabla^\alpha h)\,dx + \sum_{|\alpha|+m=\ell, \ m \ge 1} \|\pa_t^m (\nabla^\alpha h)\|_{L^2}^2\\
&\le C\e   \mathcal{S}(\ell;t) + \|h\|_{\mathfrak{X}^{\ell-1}}^2 + \|\nabla v\|_{\mathfrak{X}^\ell}^2,
\end{aligned}
\eq
where $C=C(\ell,s,\om)$ is independent of $T$. Moreover, for $0 \le |\alpha| \le \ell-1$, we have

\begin{align*}
\frac{d}{dt}\int_\om \nabla(\nabla^{\alpha}  h) \cdot \nabla^\alpha v\,dx &= -\int_\om \nabla^\alpha (\pa_t h) \nabla \cdot (\nabla^\alpha v)\,dx\\
&\quad +\int_\om \nabla(\nabla^{\alpha}  h) \cdot \nabla^\alpha (\pa_t v)\,dx\\
&=: \mathcal{I}_1 + \mathcal{I}_2.
\end{align*}
For $\mathcal{I}_1$, one gets

\[\begin{aligned}
\mathcal{I}_1 &= \int_\om \nabla^\alpha (\nabla \cdot ((1+h)v)) \nabla \cdot (\nabla^\alpha v)\,dx\\
&=  \int_\om \nabla^\alpha (\nabla \cdot (hv))\nabla \cdot (\nabla^\alpha v)\,dx + \|\nabla \cdot (\nabla^\alpha v)\|_{L^2}^2\\
&\le C\e   \mathcal{S}(\ell;t) +  \|\nabla \cdot (\nabla^\alpha v)\|_{L^2}^2.
\end{aligned}\]
For $\mathcal{I}_2$, we use relation \eqref{ind_1} and \eqref{L2-5.1} to get

\begin{align*}
\mathcal{I}_2 &=-\int_\om \nabla(\nabla^{\alpha}  h) \cdot \nabla^\alpha \lt(v \cdot \nabla v + \gamma(1+h)^{\gamma-2}\nabla h -\frac{\Delta v}{1+h} + \frac{e^g}{1+h} (v-u)\rt) \,dx\\
&=: \sum_{i=1}^4 \mathcal{I}_2^i.
\end{align*}
For $\mathcal{I}_2^1$, straightforward computation gives

\[
\mathcal{I}_2^1 \le C\e  \mathcal{S}(\ell;t).
\]
For $\mathcal{I}_2^2$, one uses relation \eqref{ind_1} to obtain

\[\begin{aligned}
\mathcal{I}_2^2 &= -\gamma \sum_{\mu<\alpha}\binom{\alpha}{\mu} \int_\om \nabla(\nabla^{\alpha}  h) \cdot \nabla^\mu(\nabla h) \nabla^{\alpha-\mu}((1+h)^{\gamma-2})\,dx\\
&\quad - \gamma\int_\om |\nabla(\nabla^{\alpha} h)|^2 (1+h)^{\gamma-2}\,dx\\
&\le C\e  \mathcal{S}(\ell;t) -\frac12 \|\nabla(\nabla^{\alpha}  h)\|_{L^2}^2,
\end{aligned}\]
where we used
\[
\gamma(1+h)^{\gamma-2} \ge  \gamma\min\lt\{ \lt(1+\sup_{0\le t \le T} \|h(t)\|_{L^\infty}\rt)^{\gamma-2}, \lt(1-\sup_{0\le t \le T} \|h(t)\|_{L^\infty}\rt)^{\gamma-2} \rt\}\ge \frac{1}{2},
\]
and $C=C(\ell,s,\gamma,\om)$ is independent of $T$. \\

\noindent For $\mathcal{I}_2^3$, we again use relation \eqref{ind_1}  to yield

\[\begin{aligned}
\mathcal{I}_2^3 &= \sum_{\mu<\alpha} \int_\om \nabla(\nabla^{\alpha}  h) \cdot \nabla^{\mu}(\Delta v) \nabla^{\alpha-\mu}\lt(\frac{1}{1+h}\rt)\,dx\\
&\quad + \int_\om \nabla(\nabla^{\alpha}  h) \cdot \frac{\nabla^\alpha(\Delta v)}{1+h}\,dx\\
&\le C\e   \mathcal{S}(\ell;t) + C\|\nabla(\nabla^\alpha h)\|_{L^2} \|\nabla^\alpha (\Delta v)\|_{L^2}\\
&\le C\e   \mathcal{S}(\ell;t) +\frac18\|\nabla(\nabla^\alpha h)\|_{L^2}^2 + C \|\nabla^\alpha (\Delta v)\|_{L^2}^2.
\end{aligned}\]
For $\mathcal{I}_2^4$, we use relation \eqref{L2-5.1} and Young's inequality to have

\[\begin{aligned}
\mathcal{I}_2^4 &= -\sum_{\mu<\alpha}\binom{\alpha}{\mu}\int_\om \nabla(\nabla^{\alpha}  h) \cdot \nabla^\mu(v-u) \nabla^{\alpha-\mu}\lt(\frac{e^g}{1+h}\rt)\,dx\\
&\quad - \int_\om \frac{e^g}{1+h} \nabla^\alpha(v-u)\cdot \nabla(\nabla^{\alpha}  h)\,dx\\
&\le C\e  W(\ell;t) + C\e   \mathcal{S}(\ell;t) + C(\|\nabla^\alpha v\|_{L^2} + \|\nabla^\alpha u\|_{L^2})\|\nabla(\nabla^\alpha h)\|_{L^2}\\
&\le C\e  W(\ell;t) + C\e   \mathcal{S}(\ell;t) + \frac18\|\nabla(\nabla^\alpha h)\|_{L^2}^2 + C(\|\nabla^\alpha v\|_{L^2}^2 + \|\nabla^\alpha u\|_{L^2}^2).
\end{aligned}\]
Thus, we collect the estimates for $\mathcal{I}_2^i$'s to yield

\bq\label{L2-6.3}
\begin{aligned}
\frac{d}{dt}&\int_\om \nabla(\nabla^{\alpha}  h) \cdot \nabla^\alpha v\,dx + \frac14\|\nabla(\nabla^\alpha h)\|_{L^2}^2 \\
&\le C\e   \mathcal{S}(\ell;t) + C\e W(\ell;t)  + C\lt( \|\nabla^\alpha u\|_{L^2}^2 + \|\nabla^\alpha v\|_{L^2}^2 +\|\nabla\cdot(\nabla^\alpha v)\|_{L^2}^2 + \|\Delta (\nabla^\alpha v)\|_{L^2}^2\rt),
\end{aligned}
\eq
where $C=C(\ell,s,\om)$ is independent of $T$. We sum \eqref{L2-6.3} over every $\alpha$ with $0\le|\alpha|\le \ell-1$ to get

\bq\label{L2-6.4}
\begin{aligned}
\frac{d}{dt}&\sum_{ |\alpha|\le \ell-1} \int_\om \nabla(\nabla^\alpha h) \cdot \nabla^\alpha v\,dx + c_0 \|\nabla h\|_{H^{\ell-1}}^2\\
&\le C\e  \mathcal{S}(\ell;t) + C\e  W(\ell;t) + C\lt( \|u\|_{H^{\ell-1}}^2  + \|v\|_{H^\ell}^2 + \|\nabla v\|_{H^\ell}^2\rt),
\end{aligned}
\eq
where $C=C(\ell,s,\om)$ is independent of $T$. Moreover, we can use Poincar\'e inequality to get

\bq\label{L2-6.5}
\begin{aligned}
\|h\|_{L^2}^2 \le C\|\nabla h\|_{L^2}^2 &\le C\|\nabla h (1+h)^{\gamma-2}\|_{L^2}^2\\
&\le C\lt( \|\pa_t v\|_{L^2}^2 + \|v \cdot \nabla v\|_{L^2}^2 + \lt\|\frac{\Delta v}{1+h}\rt\|_{L^2}^2 + \lt\|\frac{e^g}{1+h}(v-u)\rt\|_{L^2}^2\rt)\\
&\le C\lt(\e  \mathcal{S}(1;t)+\|\pa_t v\|_{L^2}^2  +   \|\Delta v\|_{L^2}^2 + \|v\|_{L^2}^2 + \|u\|_{L^2}^2\rt)\\
&\le C\lt(\e  \mathcal{S}(1;t) + \|v\|_{\mathfrak{X}^1}^2 + \|\nabla v\|_{H^1}^2 + \|u\|_{L^2}^2\rt),
\end{aligned}
\eq
where $C$ is independent of $T$. Thus, we gather \eqref{L2-6.2}, \eqref{L2-6.4} and \eqref{L2-6.5} to yield the desired result.

\end{proof}

\vspace{0.2cm}

We gather all the estimates from previous lemmas to yield the following result.

\begin{corollary}\label{C2.1}
For $T>0$, suppose that $\mh(s;T)\le \e ^2 \ll 1$ for a sufficiently small $\e >0$ satisfying
\[
\e \le \min\{\e_p\}_{p=1}^s, \quad  \gamma \min\lt\{ \lt(1+\mathfrak{h}\rt)^{\gamma-2}, \lt(1-\mathfrak{h}\rt)^{\gamma-2} \rt\} \ge \frac12, \quad \mathfrak{h}\le \frac12, \quad \sup_{0 \le t \le T} \|g(t)\|_{L^\infty} \le \log2,
\]
where $\mathfrak{h}=\sup_{0\le t \le T} \|h(t)\|_{L^\infty}$. Then, there exists a constant $C^*=C^*(\ell,s,\gamma,\om)>0$ such that
\[
\mh(s;T)\le C^*\mh_0(s).
\]
\end{corollary}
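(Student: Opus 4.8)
The goal is to establish the closed estimate $\mathscr{H}(s;T)\le C^*\mathscr{H}_0(s)$ by combining the differential inequalities from Lemmas \ref{L2.2}--\ref{L2.6} with the elliptic-type bound in Lemma \ref{L2.1} and the zeroth-order energy estimate in Proposition \ref{P2.1}. The plan is to build a single \emph{Lyapunov-type energy functional} $\mathcal{E}(t)$ which is equivalent, uniformly in $T$, to $\mathscr{H}(s;t)$ (more precisely to $\sup_{0\le\tau\le t}$ of the instantaneous energy), and to show $\frac{d}{dt}\mathcal{E}(t)\le 0$ once $\e$ is small enough.

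Concretely, for each $1\le \ell\le s$ I would form the combination
\[
\mathcal{E}_\ell(t) := A_\ell\Big(\|\pa_t^\ell g\|_{L^2}^2+\|\pa_t^\ell u\|_{L^2}^2\Big)
- B_\ell\!\int_\om(\pa_t^{\ell-1}g)(\pa_t^\ell g)\,dx
+ C_\ell\!\!\sum_{|\alpha|=\ell-1}\!\!\|\nabla_{t,x}^\alpha\omega\|_{L^2}^2
+ D_\ell\big(\|v\|_{\mathfrak{X}^\ell}^2+\gamma\|h\|_{\mathfrak{X}^\ell}^2\big)
+ E_\ell\,\mathcal{K}_\ell(t),
\]
where $\mathcal{K}_\ell(t)$ is the mixed functional $\sum_{|\alpha|\le\ell-1}\int_\om\nabla(\nabla^\alpha h)\cdot\nabla^\alpha v\,dx-\sum_{|\alpha|+m=\ell,\,m\ge1}\int_\om\pa_t^{m-1}(\nabla^\alpha h)\pa_t^m(\nabla^\alpha h)\,dx$ appearing in Lemma \ref{L2.6}, and then set $\mathcal{E}(t):=\mathscr{H}(0;t)+\sum_{\ell=1}^s \kappa_\ell\,\mathcal{E}_\ell(t)$ with constants to be fixed. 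The positive constants $A_\ell,B_\ell,\dots,\kappa_\ell$ are chosen \emph{hierarchically}: first one notes that $B_\ell$ must be taken small relative to $A_\ell$ so that, by Cauchy--Schwarz and Young, $\mathcal{E}_\ell(t)\approx \|\pa_t^\ell g\|_{L^2}^2+\|\pa_t^\ell u\|_{L^2}^2+\sum_{|\alpha|=\ell-1}\|\nabla_{t,x}^\alpha\omega\|_{L^2}^2+\mathcal{S}(\ell;t)$; invoking Lemma \ref{L2.1} and Proposition \ref{P2.1} this is in turn $\approx W(\ell;t)+\mathcal{S}(\ell;t)\approx\mathscr{H}(\ell;t)$ (instantaneously), so $\mathcal{E}(t)\approx \mathscr{H}(s;t)$ on $[0,T]$ with constants independent of $T$.

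For the dissipation, I would differentiate $\mathcal{E}$ and substitute the six lemmas. Lemma \ref{L2.2} gives $-3\|\pa_t^\ell u\|_{L^2}^2+2\|\pa_t^\ell v\|_{L^2}^2$ (up to $C\e W$), Lemma \ref{L2.3} gives $-\|\pa_t^\ell g\|_{L^2}^2$ at the price of $+2\|\pa_t^\ell u\|_{L^2}^2+\frac34\|\pa_t^{\ell-1}u\|_{L^2}^2+\frac34\|\pa_t^{\ell-1}v\|_{L^2}^2$, Lemma \ref{L2.4} gives $-\frac32\|\nabla_{t,x}^\alpha\omega\|_{L^2}^2+2\|\nabla_{t,x}^\alpha\nu\|_{L^2}^2$, Lemma \ref{L2.5} gives $-\frac12(\|v\|_{\mathfrak{X}^\ell}^2+\|\nabla v\|_{\mathfrak{X}^\ell}^2)$, and Lemma \ref{L2.6} gives $-\frac14\|h\|_{\mathfrak{X}^\ell}^2$ at the price of terms controlled by $\|h\|_{\mathfrak{X}^{\ell-1}}^2+\|u\|_{\mathfrak{X}^{\ell-1}}^2+\|v\|_{\mathfrak{X}^\ell}^2+\|\nabla v\|_{\mathfrak{X}^\ell}^2$. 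The strategy is to absorb each ``bad'' term into a dissipative term that is \emph{one level higher in the weight count}: the $+\|\pa_t^\ell u\|_{L^2}^2$ produced by Lemma \ref{L2.3} is swallowed by the $-3\|\pa_t^\ell u\|_{L^2}^2$ of Lemma \ref{L2.2} after choosing $B_\ell$ small relative to $A_\ell$; the vorticity-of-$v$ terms $\|\nabla_{t,x}^\alpha\nu\|_{L^2}^2$ and the velocity terms $\|\pa_t^\ell v\|_{L^2}^2$, $\|\nabla^\alpha v\|_{L^2}^2$ are all $\le C\|\nabla v\|_{\mathfrak{X}^\ell}^2+C\|v\|_{\mathfrak{X}^\ell}^2$ and hence absorbed into the $-\frac12\|\nabla v\|_{\mathfrak{X}^\ell}^2-\frac12\|v\|_{\mathfrak{X}^\ell}^2$ of Lemma \ref{L2.5} by taking $D_\ell$ large relative to $A_\ell,C_\ell$; the $\|h\|_{\mathfrak{X}^{\ell-1}}^2$ and $\|u\|_{\mathfrak{X}^{\ell-1}}^2$ remainders from Lemmas \ref{L2.5}, \ref{L2.6} are \emph{one order lower}, so they are absorbed by the already-built dissipation $-c(W(\ell-1;t)+\mathcal{S}(\ell-1;t))$ coming from the level-$(\ell-1)$ blocks provided $\kappa_{\ell-1}$ is chosen large relative to $\kappa_\ell$ (this is where the hierarchy in $\ell$ is essential, and the base case $\ell=1$ closes against $\mathscr{H}(0;t)$, whose dissipation $-\int_\om|\nabla v|^2-\int_\om\rho|u-v|^2$ comes from the energy identity in Proposition \ref{P2.1}). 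After all absorptions one arrives at
\[
\frac{d}{dt}\mathcal{E}(t) \le -c_\dagger\,\mathscr{H}(s;t) + C\e\big(W(s;t)+\mathcal{S}(s;t)\big) \le -\tfrac{c_\dagger}{2}\,\mathscr{H}(s;t) \le 0
\]
for $\e$ small enough that $C\e\le c_\dagger/2$, using $W(s;t)+\mathcal{S}(s;t)\le C\mathscr{H}(s;t)$. Integrating in time and using the equivalence $\mathcal{E}(t)\approx\mathscr{H}(s;t)$ gives $\mathscr{H}(s;T)\le C\mathcal{E}(T)\le C\mathcal{E}(0)\le C^*\mathscr{H}_0(s)$, with $C^*$ depending only on $\ell,s,\gamma,\om$ and not on $T$.

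The main obstacle is the bookkeeping of the constant hierarchy: one must verify that the chain of choices ``$B_\ell\ll A_\ell$, then $D_\ell\gg A_\ell,C_\ell$, then $\kappa_{\ell-1}\gg\kappa_\ell$, then finally $\e$ small'' can indeed be made consistently and simultaneously for all $\ell=1,\dots,s$ without circularity — in particular that the coefficient $C$ multiplying the lower-order remainder $W(\ell-1;t)$ in Lemma \ref{L2.5}/\ref{L2.6} is a fixed constant (independent of the large parameters $D_\ell,\kappa_\ell$ chosen \emph{afterwards}), so that it is genuinely absorbable by scaling up $\kappa_{\ell-1}$. A secondary subtlety is that Lemmas \ref{L2.2}--\ref{L2.6} control time-and-space derivatives separately (via $W,\mathcal{S},\mathcal{T}$ and the $\mathfrak{X}^\ell$-norms), so I must use Lemma \ref{L2.1} to convert the dissipation/equivalence statements back into genuine $\mathfrak{X}^s$-control of $(g,u)$; one checks that Lemma \ref{L2.1}'s error term $\|v\|_{\mathfrak{X}^{p-1}}^2$ is again lower order and harmless. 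Everything else — Young's inequality, summation over multi-indices, and Grönwall/integration — is routine.
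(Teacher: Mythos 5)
Your strategy of building a single Lyapunov functional $\mathcal{E}(t)$ with a hierarchical choice of constants, and showing $\frac{d}{dt}\mathcal{E}\le 0$, is genuinely different from the paper's argument: the paper proceeds by induction on $\ell$, bounding the lower-order remainder $W(\ell-1;t)+\mathcal{S}(\ell-1;t)$ \emph{statically} by $C\mh_0(\ell-1)$ via the induction hypothesis and then applying Gr\"onwall's lemma at each level, rather than trying to absorb lower-order remainders into lower-level dissipation. This difference matters, and the absorption chain you propose breaks at the base case.

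Concretely, at level $\ell=1$ the remainders in Lemmas \ref{L2.5} and \ref{L2.6} contain $CW(0;t)=C(\|g\|_{L^2}^2+\|u\|_{L^2}^2)$ and $C\|h\|_{\mathfrak{X}^0}^2=C\|h\|_{L^2}^2$ with a \emph{non-small} constant $C$. You claim these are absorbed by the dissipation $-\int_\Omega|\nabla v|^2 - \int_\Omega\rho|u-v|^2$ produced by the level-0 energy identity in Proposition \ref{P2.1}. But that dissipation controls $\|u\|_{L^2}^2+\|v\|_{L^2}^2$ (via Poincar\'e and the lower bound on $\rho$) and gives \emph{no} control whatsoever on $\|g\|_{L^2}^2$ or $\|h\|_{L^2}^2$; the level-0 functional is a conserved/dissipated energy, not a coercive Lyapunov functional in these two quantities. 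So no matter how large you take $\kappa_0$, the terms $\kappa_1 C(\|g\|_{L^2}^2+\|h\|_{L^2}^2)$ produced at level $1$ are not absorbed. Absorbing them instead into the level-$1$ dissipation $\kappa_1\lambda_3(\mathcal{S}(1;t)+\mathcal{T}(1;t)+\cdots)$ would require $\lambda_3\ge C$, but $\lambda_3$ is built (in the proof) as a product of small parameters and there is no freedom to make it beat the fixed constant $C$; both sit on the same scale $\kappa_1$, so rescaling $\kappa_1$ does not help. This is precisely the obstruction the paper circumvents by proving $\mh(\ell-1;T)\le C\mh_0(\ell-1)$ \emph{first} (base case: Proposition \ref{P2.1}), inserting this as a time-independent bound on the right-hand side, and closing the level-$\ell$ estimate with Gr\"onwall. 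Your proposal would need to be restructured along those lines: replace "absorb lower-order remainders into lower-level dissipation" by "bound lower-order remainders using the already-established boundedness at level $\ell-1$, then Gr\"onwall".

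A secondary point: you write that Lemma \ref{L2.2} gives $-3\|\pa_t^\ell u\|_{L^2}^2$, but its coefficient is $\frac32$, and the ``$+2\|\pa_t^\ell u\|_{L^2}^2$'' cost from Lemma \ref{L2.3} would only be absorbed after scaling the contribution of Lemma \ref{L2.3} down by a small factor (this is exactly the role of $\lambda_2$ in the paper's $\tilde{\mathcal T}$ and is compatible with your "$B_\ell\ll A_\ell$"), so that part is fine in spirit but needs the coefficients stated correctly. The real gap is the base-case absorption described above.
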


\begin{proof}
Our claim is to prove the following assertion based on the induction:
\[
\mh(\ell;T) \le C\mh_0(\ell), \quad 0 \le \ell \le s,
\]
where $C$ is independent of $T$. Recall that the case $\ell=0$ is proved in Proposition \ref{P2.1}. Thus, it suffices to prove the induction part. So we assume that the following holds: there exists a constant $C>0$ independent of $T$ such that
\[
\mh(m;T) \le C\mh_0(m), \quad 0 \le m \le \ell-1.
\]
Then, we combine the estimates in Lemmas \ref{L2.5}-\ref{L2.6} to find
\bq\label{C2-1.1}
\begin{aligned}
\frac{d}{dt}&\tilde{\ms}(\ell;t) + \frac18 \lt( \ms(\ell;t) + \|\nabla v\|_{\mathfrak{X}^\ell}^2\rt)\\
&\le C\lt(\e  \ms(\ell;t) + \e  W(\ell;t) + \|h\|_{\mathfrak{X}^{\ell-1}}^2 + W(\ell-1;t)\rt),
\end{aligned}
\eq
where $C=C(\ell,s,\gamma,\om)$ is independent of $T$ and $\tilde{\ms}(\ell;t)$ is written as
\[
\tilde{\ms}(\ell;t) := \|v\|_{\mathfrak{X}^\ell}^2 + \gamma \|h\|_{\mathfrak{X}^\ell}^2 + \frac12\sum_{|\alpha|\le \ell-1}\into \nabla(\nabla^\alpha h)\cdot \nabla^\alpha v\,dx -\frac12 \hspace{-0.5cm}\sum_{|\alpha|+m=\ell, \ m\ge 1} \into \pa_t^{m-1} (\nabla^\alpha h) \pa_t^m (\nabla^\alpha h)\,dx,
\]
and we have the equivalence 
\bq\label{C2-1.1.2}
\frac14\ms(\ell;t)\le \tilde{\ms}(\ell;t) \le \frac74\ms(\ell;t),
\eq
since
\[
\begin{aligned}
\Bigg|&\sum_{|\alpha|\le \ell-1}\into \nabla(\nabla^\alpha h)\cdot \nabla^\alpha v\,dx - \sum_{|\alpha|+m=\ell, \ m\ge 1} \into \pa_t^{m-1} (\nabla^\alpha h) \pa_t^m (\nabla^\alpha h)\,dx\Bigg|\\
&\le \sum_{|\alpha|\le \ell-1}\|\nabla(\nabla^\alpha h)\|_{L^2}\|\nabla^\alpha v\|_{L^2} +  \sum_{|\alpha|+m=\ell, \ m\ge 1}\|\pa_t^{m-1} (\nabla^\alpha h)\|_{L^2} \|\pa_t^m (\nabla^\alpha h)\|_{L^2}\\
&\le \frac{1}{2} \lt(\|h\|_{H^\ell}^2 + \|v\|_{H^{\ell-1}}^2\rt) +  \sum_{|\alpha|+m=\ell, \ m\ge 1} \|\pa_t^m (\nabla^\alpha h)\|_{L^2}^2\\
&\le \frac32 \ms(\ell,t).
\end{aligned}
\]
Moreover, we gather the estimates in Lemmas \ref{L2.2}-\ref{L2.4} to get

\bq\label{C2-1.2}
\begin{aligned}
\frac{d}{dt}&\lt[\mathcal{T}(\ell;t) -\frac12\sum_{1 \le m \le \ell}\int_\om (\pa_t^{m-1}g) (\pa_t^m g)\,dx \rt] +\frac12\lt[ \sum_{1\le m \le \ell} \|\pa_t^\ell g\|_{L^2}^2 + \sum_{0\le \ell \le m}\|\pa_t^\ell u\|_{L^2}^2 \rt] \\
&\le C\e  W(\ell;t) + \frac38\sum_{m \le \ell-1}\lt( \|\pa_t^m u\|_{L^2}^2 + \|\pa_t^m v\|_{L^2}^2\rt)+ 4\|v\|_{\mathfrak{X}^\ell}^2,
\end{aligned}
\eq
where $C=C(\ell,s,\om)$ is independent of $T$. Here, we use Poincar\'e inequality and smallness of $\e $ to get

\bq\label{C2-1.3}
\begin{aligned}
\|g\|_{L^2}^2 = \|\log \rho\|_{L^2}^2 \le C\lt\|\frac{\nabla \rho}{\rho}\rt\|_{L^2}^2 &= C\|\nabla g\|_{L^2}^2\\
&\le C\lt( \|\pa_t u\|_{L^2}^2 + \|u \cdot \nabla u\|_{L^2}^2 + \|u\|_{L^2}^2 + \|v\|_{L^2}^2\rt)\\
&\le C\lt(  \|\pa_t u\|_{L^2}^2  + (1+\|\nabla u\|_{L^\infty}^2)\|u\|_{L^2}^2 + \|v\|_{L^2}^2\rt)\\
&\le C_1\lt( \|\pa_t u\|_{L^2}^2 + \|u\|_{L^2}^2 + \|v\|_{L^2}^2\rt),
\end{aligned}
\eq
where $C_1=C_1(s,\om)$ is independent of $T$. Without loss of generality, we assume that $C_1>1$. Then, we multiply \eqref{C2-1.3} by $\lambda_1 := \frac{1}{4C_1}$, and add it to \eqref{C2-1.2} so that 
\bq\label{C2-1.4}
\begin{aligned}
\frac{d}{dt}&\lt[\mathcal{T}(\ell;t) -\frac12\sum_{1 \le m \le \ell}\int_\om (\pa_t^{m-1}g) (\pa_t^m g)\,dx \rt] +\lambda_1\mathcal{T}(\ell;t) \\
&\le C_2\lt(\e  W(\ell;t) + \sum_{0 \le m \le \ell-1}\lt( \|\pa_t^m u\|_{L^2}^2 + \|\pa_t^m v\|_{L^2}^2\rt)+ \|v\|_{\mathfrak{X}^\ell}^2\rt),
\end{aligned}
\eq
where $C_2=C_2(\ell,s,\om)>0$ is independent of $T$.  Then, we multiply \eqref{C2-1.4} by $\lambda_2 := \frac{1}{16C_2}$ and add it to \eqref{C2-1.1} so that we can obtain 
\bq\label{C2-1.5}
\begin{aligned}
\frac{d}{dt}& \lt[\tilde{\ms}(\ell;t) + \lambda_2 \mathcal{T}(\ell;t)-\frac{\lambda_2}{2}\sum_{1\le m \le \ell} \int_\om (\pa_t^{m-1}g) (\pa_t^m g)\,dx  \rt] + \lambda_3 \lt(\ms(\ell;t) + \mathcal{T}(\ell;t) + \|\nabla v\|_{\mathfrak{X}^\ell}^2\rt)\\
&\le C\lt(\e  \ms(\ell;t) + \e  W(\ell;t) + \ms(\ell-1;t) + W(\ell-1;t)\rt),
\end{aligned}
\eq
where $\lambda_3 := \min\{\lambda_1\lambda_2, \frac{1}{16}\}$ and $C=C(\ell,s,\gamma,\om)>0$ is independent of $T$. Here, due to
\[
\lt|\sum_{1\le m \le \ell} \int_\om (\pa_t^{m-1}g) (\pa_t^m g)\,dx\rt| \le \frac12 \sum_{1 \le m \le \ell} \lt(\|\pa_t^{m-1}g\|_{L^2}^2 + \|\pa_t^{m}g\|_{L^2}^2\rt) \le \mathcal{T}(\ell;t),
\]
we have the following equivalence:

\bq\label{C2-1.6}
\frac12\mathcal{T}(\ell;t) \le \tilde{\mathcal T}(\ell;t) := \mathcal{T}(\ell;t) -\frac12\sum_{1\le m \le \ell} \int_\om (\pa_t^{m-1}g) (\pa_t^m g)\,dx \le \frac32\mathcal{T}(\ell;t).
\eq
Hence, we apply Lemma \ref{L2.1}, \eqref{C2-1.1.2}, \eqref{C2-1.6} and induction hypothesis to \eqref{C2-1.5} and get

\[
\begin{aligned}
\frac{d}{dt}&\lt[\tilde\ms(\ell;t) + \lambda_2 \tilde{\mathcal T}(\ell;t)\rt] + \frac{\lambda_3}{4}\lt(\tilde{\ms}(\ell;t) + \tilde{\mathcal T}(\ell;t) + \|\nabla v\|_{\mathfrak{X}^\ell}^2\rt)\\
&\le C_3\e  ({\tilde\ms}(\ell;t) + \tilde{\mathcal T}(\ell;t)  ) + C\mh(\ell-1;t)\\
&\le C_3\e  ({\tilde\ms}(\ell;t) + \lambda_4 \tilde{\mathcal T}(\ell;t)  ) + C\mh_0(\ell-1),
\end{aligned}
\]
where $C$ and $C_3$ depend on $\ell$, $s$, $\gamma$, and $\Omega$ but are independent of $T$. Since $\e $ is sufficiently small, we can assume that it satisfies $-\frac{\lambda_3}{4} + C_3\e  < -\frac{\lambda_3}{8}$. Therefore, we apply Gr\"onwall's lemma to obtain

\[
\tilde{\ms}(\ell;t) + \lambda_2 \tilde{\mathcal T}(\ell;t) \le\lt( \tilde{\ms}_0(\ell) + \lambda_2 \tilde{\mathcal T}_0(\ell)\rt)e^{-\frac{\lambda_3}{8}t} + C\mh_0(\ell-1)\lt(1-e^{-\frac{\lambda_3}{8}t}\rt),
\]
and we use Lemma \ref{L2.1}, \eqref{C2-1.1.2} and \eqref{C2-1.6} to get
\[
\mh(\ell;T) \approx \sup_{0\le t \le T} \lt(\tilde{\ms}(\ell;t) + \lambda_4\tilde{\mathcal T}(\ell;t) \rt) \le C\mh_0(\ell),
\]
where $C=C(\ell,s,\gamma,\om)$ is independent of $T$ and this completes the induction argument. 
\end{proof}

\subsubsection{Proof of Theorem \ref{T1.1}} Now, we are ready to prove Theorem \ref{T1.1}. We choose $\tilde{\e} := \min\{\e_0, \e \}$ where $\e_0$ is from Theorem \ref{thm_local}  when $T=1$ (for convenience) and $\e $ is given in Corollary \ref{C2.1}. Now, we assume that the initial data $(g_0, u_0, h_0, v_0)$ satisfies
\[
\mh_0(s) \le \frac{\tilde{\e}^2}{2(1+C^*)},
\]
where $C^*>0$ appears in Corollary \ref{C2.1}. Then, we define the lifespan of strong solutions to system \eqref{A-2} as follows:
\[
\widehat{T}:= \sup\{ t\ge 0 \ | \ \mh(s;t) < \tilde{\e}^2\}.
\]
First, Theorem \ref{thm_local} implies $\widehat{T}>0$. Assume for a contradiction that $\widehat{T}<\infty$. Then, the definition of $\widehat{T}$ and Corollary \ref{C2.1} implies
\[
\tilde{\e}^2 = \mh(s;\widehat{T}) \le C^*\mh_0(s) \le C^*\frac{\tilde{\e}^2}{2(1+C^*)} \le \frac{\tilde{\e}^2}{2} < \tilde{\e}^2,
\]
which gives a contradiction. Therefore, this implies $\widehat{T}=\infty$ and this concludes the proof.

\vspace{0.6cm}

\section{Large-time behavior of a global classical solution}\label{sec:4}
\setcounter{equation}{0}
In this section, we investigate the large-time behavior estimates of a global classical solution to \eqref{A-1}. First, we define the energy and dissipation functionals as
\[
\begin{aligned}
&\mathscr{E}(t) := \frac12\into \rho|u|^2\,dx + \frac12\into n|v|^2\,dx +\into \rho \int_{\rho_c}^\rho \frac{z-\rho_c}{z^2}\,dzdx + \into n\int_{n_c}^n \frac{z^\gamma-(n_c)^\gamma}{z^2}\,dzdx,\\
&\mathscr{D}(t) := \into |\nabla v|^2\,dx + \into \rho|u-v|^2\,dx.
\end{aligned}
\]
\begin{lemma}\label{L3.1}
Let $(\rho,u,n,v)$ be any global classical solution to system \eqref{A-1}. Then we have
\[
\frac{d}{dt}\mathscr{E}(t) + \mathscr{D}(t) =0, \quad t >0.
\]
\end{lemma}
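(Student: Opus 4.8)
The plan is the classical weighted energy identity: test the two momentum equations against $u$ and $v$, test the two continuity equations against suitable ``relative entropy'' potentials, and use the boundary conditions $u\cdot r\equiv0$ and $v\equiv0$ on $\partial\Omega$ to discard every boundary contribution; since $(\rho,u,n,v)$ is a classical solution, all the integrations by parts and chain-rule manipulations below are legitimate. First I would compute $\frac{d}{dt}\tfrac12\into\rho|u|^2\,dx=\into u\cdot\partial_t(\rho u)\,dx-\tfrac12\into|u|^2\partial_t\rho\,dx$ and substitute the $\rho u$- and $\rho$-equations of \eqref{A-1}. The convective contributions $-\into u\cdot\nabla\cdot(\rho u\otimes u)\,dx$ and $\tfrac12\into|u|^2\nabla\cdot(\rho u)\,dx$ both reduce, after one integration by parts (boundary terms vanishing since $\rho u\cdot r=0$ on $\partial\Omega$), to $\into\rho u\cdot\nabla(\tfrac12|u|^2)\,dx$ with opposite signs, hence cancel; what survives is $-\into u\cdot\nabla\rho\,dx+\into\rho u\cdot(v-u)\,dx$, and a further integration by parts turns the first term into $\into\rho\,\nabla\cdot u\,dx$. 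The parallel computation for $\tfrac12\into n|v|^2\,dx$ --- using the $nv$- and $n$-equations, killing the convective terms with $v=0$ on $\partial\Omega$, and $\into v\cdot\Delta v\,dx=-\into|\nabla v|^2\,dx$ --- gives $\frac{d}{dt}\tfrac12\into n|v|^2\,dx=\into p(n)\,\nabla\cdot v\,dx-\into|\nabla v|^2\,dx-\into\rho v\cdot(v-u)\,dx$.

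Next I would treat the potential terms. Writing the third and fourth summands of $\mathscr E$ as $\into\tilde\Pi(\rho)\,dx$ and $\into\tilde\Pi_n(n)\,dx$ with $\tilde\Pi(\rho):=\rho\int_{\rho_c}^\rho\frac{z-\rho_c}{z^2}\,dz$ and $\tilde\Pi_n(n):=n\int_{n_c}^n\frac{z^\gamma-n_c^\gamma}{z^2}\,dz$, a direct differentiation via the fundamental theorem of calculus yields the algebraic identities $\rho\tilde\Pi'(\rho)-\tilde\Pi(\rho)=\rho-\rho_c$ and $n\tilde\Pi_n'(n)-\tilde\Pi_n(n)=n^\gamma-n_c^\gamma$. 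Combining these with the continuity equations produces the transport-with-source identities $\partial_t\tilde\Pi(\rho)+\nabla\cdot(\tilde\Pi(\rho)u)=-(\rho-\rho_c)\nabla\cdot u$ and $\partial_t\tilde\Pi_n(n)+\nabla\cdot(\tilde\Pi_n(n)v)=-(n^\gamma-n_c^\gamma)\nabla\cdot v$. Integrating over $\Omega$, the divergence terms vanish, and so do $\rho_c\into\nabla\cdot u\,dx$ and $n_c^\gamma\into\nabla\cdot v\,dx$, all by the boundary conditions; hence $\frac{d}{dt}\into\tilde\Pi(\rho)\,dx=-\into\rho\,\nabla\cdot u\,dx$ and $\frac{d}{dt}\into\tilde\Pi_n(n)\,dx=-\into p(n)\,\nabla\cdot v\,dx$.

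Finally I would add the four contributions: the term $\into\rho\,\nabla\cdot u\,dx$ from the $u$-energy cancels $\frac{d}{dt}\into\tilde\Pi(\rho)\,dx$, the term $\into p(n)\,\nabla\cdot v\,dx$ from the $v$-energy cancels $\frac{d}{dt}\into\tilde\Pi_n(n)\,dx$, and what remains is $\frac{d}{dt}\mathscr E(t)=-\into|\nabla v|^2\,dx+\into\rho u\cdot(v-u)\,dx-\into\rho v\cdot(v-u)\,dx$; the two drag terms combine into $\into\rho(u-v)\cdot(v-u)\,dx=-\into\rho|u-v|^2\,dx$, giving precisely $\frac{d}{dt}\mathscr E(t)+\mathscr D(t)=0$. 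I do not anticipate a real obstacle here: the only points needing care are checking that every boundary integral generated by the integrations by parts in the convective and pressure terms genuinely vanishes under $u\cdot r\equiv0$ and $v\equiv0$, and verifying the two algebraic identities $\rho\tilde\Pi'(\rho)-\tilde\Pi(\rho)=\rho-\rho_c$ and $n\tilde\Pi_n'(n)-\tilde\Pi_n(n)=n^\gamma-n_c^\gamma$ that make the relative-entropy terms generate exactly the sources needed to absorb the pressure work; classical regularity of the solution makes every step rigorous.
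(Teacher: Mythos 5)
Your proof is correct and is essentially the same energy identity the paper uses; you verify $\rho\tilde\Pi'(\rho)-\tilde\Pi(\rho)=\rho-\rho_c$ and $n\tilde\Pi_n'(n)-\tilde\Pi_n(n)=n^\gamma-n_c^\gamma$ and propagate the relative-entropy form directly, whereas the paper first notes that $\frac{d}{dt}\into\tilde\Pi(\rho)\,dx=\frac{d}{dt}\into\rho\log\rho\,dx$ and $\frac{d}{dt}\into\tilde\Pi_n(n)\,dx=\frac{1}{\gamma-1}\frac{d}{dt}\into n^\gamma\,dx$ (by conservation of mass) and then invokes the energy balance computed in Proposition \ref{P2.1}. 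The two routes are the same computation organized slightly differently, and both handle the boundary terms the same way via $u\cdot r\equiv 0$ and $v\equiv 0$ on $\partial\Omega$.
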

\begin{proof}
Note that the following holds:
\[
\begin{aligned}
&\frac{d}{dt}\lt(\into \rho \int_{\rho_c}^\rho \frac{z-\rho_c}{z^2}\,dzdx\rt)= \frac{d}{dt}\lt(\into \rho\log\rho\,dx \rt),\\
&\frac{d}{dt}\lt(\into n\int_{n_c}^n \frac{z^\gamma-(n_c)^\gamma}{z^2}\,dzdx\rt) = \frac{d}{dt}\lt(\frac{1}{\gamma-1}\into n^\gamma\,dx\rt).
\end{aligned}
\]
Then, as we did in Proposition \ref{P2.1} direct computation yields the desired result.
\end{proof}
We can use the following lemma to get an equivalence between the energy functional $\mathscr{E}$ and the Lyapunov functional $\mathscr{L}$. For the proof, we refer to \cite{C0}. 

\begin{lemma}\cite{C0}\label{L3.2}
Let $r_0$, $\bar r$ and $\gamma \ge 1$ be given positive constants and define
\[
f(\gamma, r_0; r) := r\int_{r_0}^r \frac{z^\gamma -r_0^\gamma}{z^2}\,dz, \quad r \in [0,\bar r].
\]
Then, there exists a constant $C=C(\gamma, r_0, \bar r)>0$ independent of $r$ such that
\[
\frac{1}{C}(r-r_0)^2 \le f(\gamma,r_0;r)\le C(r-r_0)^2, \quad \forall r \in[0,\bar r].
\]
\end{lemma}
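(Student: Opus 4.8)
The plan is to reduce the two-sided estimate to the single statement that the ratio
\[
\phi(r) := \frac{f(\gamma,r_0;r)}{(r-r_0)^2}
\]
extends to a continuous, strictly positive function on the compact interval $[0,\bar r]$. Once that is established, $\phi$ attains a finite maximum $M$ and a strictly positive minimum $m$ there, and taking $C := \max\{M,\,1/m\}$ — which depends only on $\gamma$, $r_0$, $\bar r$ — immediately yields $\frac1C(r-r_0)^2 \le f(\gamma,r_0;r) \le C(r-r_0)^2$ on $[0,\bar r]$.

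First I would isolate the inner integral. Writing $g(r) := \int_{r_0}^r \frac{z^\gamma - r_0^\gamma}{z^2}\,dz$, so that $f(\gamma,r_0;r) = r\,g(r)$, one has $g(r_0)=0$ and $g'(r) = \frac{r^\gamma - r_0^\gamma}{r^2}$, which is negative on $(0,r_0)$ and positive on $(r_0,\infty)$. Hence $g$ is strictly decreasing, then strictly increasing, so $g(r) > 0$ for every $r \in (0,\infty)\setminus\{r_0\}$, and therefore $f(\gamma,r_0;r) > 0$ — equivalently $\phi(r) > 0$ — on $(0,\bar r]\setminus\{r_0\}$. It remains to handle the two degenerate points $r=r_0$ and $r=0$.

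At $r=r_0$ the function $g$ is smooth, with $g'(r_0)=0$ and $g''(r_0)=\gamma r_0^{\gamma-3}$, so Taylor's theorem (or L'Hôpital applied twice) gives $\phi(r) \to \tfrac12\gamma r_0^{\gamma-2} > 0$ as $r\to r_0$. At $r=0$ the integrand defining $g$ fails to be integrable, yet the product $r\,g(r)$ stays bounded: evaluating the elementary primitive gives, for $\gamma>1$,
\[
f(\gamma,r_0;r) = \frac{r^\gamma}{\gamma-1} + r_0^\gamma - \frac{\gamma r_0^{\gamma-1}}{\gamma-1}\,r,
\]
and $f(1,r_0;r) = r\log(r/r_0) + r_0 - r$ when $\gamma=1$; in either case $f(\gamma,r_0;r)\to r_0^\gamma$ as $r\to 0^+$, so $\phi(r)\to r_0^{\gamma-2}>0$. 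This also fixes how $f$ is to be interpreted at $r=0$ itself.

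Combining these facts, $\phi$ extends continuously and strictly positively from $(0,\bar r]\setminus\{r_0\}$ to all of $[0,\bar r]$, and the compactness argument above finishes the proof. The one step needing genuine care — and, to my mind, the only real obstacle — is the non-integrable singularity of the integrand at $z=0$: near the origin one cannot reason through $g$ alone but must pass to the explicit antiderivative to see that $f=r\,g$ admits a finite, nonzero quadratic comparison there. Everywhere else the argument is just monotonicity of $g$ together with a second-order expansion at $r_0$.
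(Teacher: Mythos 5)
Your proof is correct. The paper does not supply its own argument for this lemma --- it simply refers the reader to \cite{C0} --- so there is no in-paper proof to compare against. That said, your compactness argument is a complete and elementary route to the two-sided bound: you correctly observe that $g(r)=\int_{r_0}^r (z^\gamma-r_0^\gamma)z^{-2}\,dz$ is strictly positive away from $r_0$ (since $g'$ changes sign only at $r_0$ and $g(r_0)=0$), so the ratio $\phi(r)=f(\gamma,r_0;r)/(r-r_0)^2$ is continuous and strictly positive on $(0,\bar r]\setminus\{r_0\}$; you correctly remove the two degeneracies by a second-order Taylor expansion at $r_0$ (giving $\phi(r_0)=\tfrac12\gamma r_0^{\gamma-2}>0$, using $g'(r_0)=0$ and $g''(r_0)=\gamma r_0^{\gamma-3}$) and by the explicit antiderivative near $r=0$ (giving $\phi(0)=r_0^{\gamma-2}>0$, in both the $\gamma>1$ and $\gamma=1$ cases); and the extreme value theorem on the compact interval $[0,\bar r]$ then delivers the constant $C=C(\gamma,r_0,\bar r)$. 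Your remark that the only genuinely delicate point is the non-integrable singularity of the integrand at $z=0$, which forces one to pass to $f=rg$ rather than reason through $g$ alone, is exactly right.
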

\begin{remark}
Since $\rho \in [0,\bar\rho]$ and $n \in [0,\bar n]$, as a direct corollary of Lemma \ref{L3.2}, we have
\[
\rho\int_{\rho_c}^\rho \frac{z-\rho_c}{z^2}\,dz \approx (\rho-\rho_c)^2, \quad n\int_{n_c}^n \frac{z^\gamma -(n_c)^\gamma}{z^2}\,dz \approx (n-n_c)^2,
\]
and hence, we get
\[
{\mathscr E}(t) \approx \mathscr{L}(t), \quad \forall t \ge 0.
\]
\end{remark}
Next, we show that the dissipation functional $\mathscr{D}(t)$ can bound the Lyapunov functional $\mathscr{L}$ without the evolution of densities $\rho$ and $n$.

\begin{lemma}\label{L3.3}
Let $(\rho,u,n,v)$ be any global classical solution to system \eqref{A-1} satisfying
\[
\rho \in [0,\bar\rho], \quad n \in [0,\bar n].
\]
Then we have
\[
\mathscr{L}^-(t) \le C\mathscr{D}(t), \quad \forall t \ge 0,
\]
where $C$ is a positive constant independent of $t$ and $\mathscr{L}^-(t)$ is defined as
\[
\mathscr{L}^-(t) := \mathscr{L}(t) - \into (\rho-\rho_c)^2\,dx -\into (n-n_c)^2\,dx.
\]
\end{lemma}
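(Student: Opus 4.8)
The plan is to notice that $\mathscr{L}^-(t)$ is nothing but the total kinetic energy,
\[
\mathscr{L}^-(t) = \frac12\into \rho|u|^2\,dx + \frac12\into n|v|^2\,dx,
\]
and to control each of the two pieces separately by $\mathscr{D}(t) = \into |\nabla v|^2\,dx + \into \rho|u-v|^2\,dx$. The single tool needed is the Poincar\'e inequality, which is available precisely because $v$ satisfies the homogeneous Dirichlet condition $v|_{\pa\om}=0$: there is a constant $C_P=C_P(\om)$ with $\|v\|_{L^2}^2 \le C_P\|\nabla v\|_{L^2}^2$.

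First I would bound the $n$-kinetic energy. Since $0\le n\le \bar n$, we have $\into n|v|^2\,dx \le \bar n\|v\|_{L^2}^2 \le \bar n C_P\into|\nabla v|^2\,dx$, which is already dominated by $\mathscr{D}(t)$. Next, for the $\rho$-kinetic energy, the point is that the drag term supplies dissipation only for the velocity difference $u-v$, not for $u$ itself; to bridge this gap I would write $u=(u-v)+v$ and use $|u|^2\le 2|u-v|^2+2|v|^2$ together with $0\le\rho\le\bar\rho$:
\[
\into \rho|u|^2\,dx \le 2\into \rho|u-v|^2\,dx + 2\bar\rho\|v\|_{L^2}^2 \le 2\into \rho|u-v|^2\,dx + 2\bar\rho C_P\into|\nabla v|^2\,dx,
\]
and both terms on the right are controlled by $\mathscr{D}(t)$. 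Adding the two estimates gives $\mathscr{L}^-(t)\le C\mathscr{D}(t)$ with $C$ depending only on $\bar\rho$, $\bar n$ and $\om$, hence independent of $t$.

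There is no genuine obstacle here; the only thing worth flagging is the structural observation that the damping $\into\rho|u-v|^2$ by itself cannot see $u$, so one must pass through $v$ and exploit the Dirichlet boundary condition via Poincar\'e. It is also worth remarking that boundedness of the densities enters only through the upper bounds $\bar\rho,\bar n$, so no positive lower bound on $\rho$ or $n$ is required.
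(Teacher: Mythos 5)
Your argument is correct and is essentially the paper's proof: both reduce $\mathscr{L}^-(t)$ to the kinetic energy, bound the $n|v|^2$ term by $\|\nabla v\|_{L^2}^2$ via Poincar\'e with the Dirichlet condition on $v$, and pass from $u$ to $(u-v)$ plus $v$ with a Young-type inequality so that $\into\rho|u|^2$ is absorbed by $\mathscr{D}(t)$. The only cosmetic difference is that the paper phrases the $u$-estimate by expanding $\into\rho|u-v|^2$ and introducing a small weight $c_0$, while you apply $|u|^2\le 2|u-v|^2+2|v|^2$ directly; the content is identical.
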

\begin{proof}
First, we use Young's inequality and Poincar\'e inequality to get

\[
\begin{aligned}
\into \rho|u-v|^2\,dx &= \into \rho|u|^2\,dx +\into \rho|v|^2\,dx -2\into \rho u\cdot v\,dx\\
&\ge \frac12\into \rho|u|^2\,dx -\into\rho|v|^2\,dx\\
&\ge \frac12 \into\rho|u|^2\,dx -\bar\rho \into |v|^2\,dx\\
&\ge \frac12 \into\rho|u|^2\,dx - C\into |\nabla v|^2\,dx,
\end{aligned}
\]
where $C = C(\bar\rho, \Omega)$ is a constant independent of $t$.  Thus, we can choose a small constant $c_0 \in (0,1)$ such that
\bq\label{L3-3.1}
c_0\into \rho|u-v|^2\,dx + \frac12\into |\nabla v|^2\,dx \ge \frac{c_0}{2} \into \rho|u|^2\,dx.
\eq
Moreover, we again use Poincar\'e inequality to get
\bq\label{L3-3.2}
\into n|v|^2\,dx \le \bar n \into |v|^2\,dx \le C\into |\nabla v|^2\,dx,
\eq 
where $C = C(\bar n, \Omega)$ is a constant independent of $t$. Thus, we  combine \eqref{L3-3.1} and \eqref{L3-3.2} to get

\[
\begin{aligned}
\mathscr{D}(t) &= (1-c_0)\into \rho|u-v|^2\,dx + \lt(c_0 \into \rho|u-v|^2\,dx + \frac12\into |\nabla v|^2\,dx \rt) + \frac12\into |\nabla v|^2\,dx\\
&\ge C \lt(\into \rho |u|^2\,dx + \into n|v|^2\,dx\rt),
\end{aligned}
\]
which completes the proof.

\end{proof}
To obtain dissipation estimates with respect to densities $\rho$ and $n$, we would use Bogovskii--type inequality in a bounded domain, as done in previous literature \cite{FZZ12, FP00, FNP01}. For detail, we refer to \cite{Bog80, BS90, Gal94}.

\begin{lemma}\label{L3.4}
Let $\Omega$ be a bounded, Lipschitz domain in $\R^d$, $d\ge 2$, $p,r\in(1,\infty)$ given numbers and $f \in \{L^p(\Omega)\ | \ \into f = 0\}$. Then, the equation
\bq\label{Poi}
\nabla \cdot \nu = f, \quad \nu|_{\pa\om} = 0,
\eq
admits a solution operator $\mathcal{B} : f \mapsto \nu$ satisfying the following properties:
\begin{enumerate}
\item
$\mathcal{B}$ is a linear operator from $L^p(\om)$ to $[W_0^{1,p}(\om)]^d$:
\[
\|\mathcal{B}[f]\|_{W^{1,p}(\om)} \le C\|f\|_{L^p(\om)},
\]
where $C = C(p,\om)$ is a positive constant.

\item
$\nu= \mathcal{B}[f]$ solves the equation \eqref{Poi}.
\item
If $f\in L^p(\om)$ can be written as $f = \nabla \cdot g$ for some $g \in L^r(\om)$ satisfying $g \cdot r \equiv 0$ on $\pa\om$, where $r=r(x)$ is the outward unit normal vector field to $\pa\om$, then we have
\[
\|\mathcal{B}[f]\|_{L^r(\om)} \le C\|g\|_{L^r(\om)},
\]
where $C=C(p,r,\om)$ is a positive constant.
\end{enumerate}
\end{lemma}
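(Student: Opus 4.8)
The plan is to prove this by the classical construction of the \emph{Bogovskii operator}: reduce a general bounded Lipschitz domain to the model case of a domain star-shaped with respect to a ball, on which an explicit integral formula for $\mathcal B$ is available, and then read off all three properties from that formula together with Calder\'on--Zygmund theory. \emph{Step 1 (localization).} Since $\om$ is bounded and Lipschitz, we may cover $\overline{\om}$ by finitely many open sets and write $\om=\bigcup_{j=1}^{N}\om_j$, where each $\om_j$ is star-shaped with respect to some ball $B_j$ with $\overline{B_j}\subset\om_j$. Fixing a partition of unity $\{\varphi_j\}$ subordinate to this cover, we decompose $f=\sum_{j=1}^{N}f_j$, where $f_j$ is a correction of $\varphi_j f$ that is supported in $\om_j$ and has $\int_{\om_j}f_j=0$; such a decomposition exists because $\om$ is connected, and it is obtained by transporting the excess masses $\int_\om\varphi_j f$ along a chain of overlapping patches using fixed zero-mean bump functions. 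The localization map $f\mapsto(f_1,\dots,f_N)$ is bounded on $L^p$, and, as needed for (3), it carries a divergence $\nabla\cdot g$ with vanishing normal trace into divergences $\nabla\cdot g_j$ with $g_j\in L^r(\om_j)$ vanishing on $\pa\om_j$, boundedly on $L^r$. It then suffices to build a bounded solution operator $\mathcal B_j$ on each $\om_j$ and set $\mathcal B[f]:=\sum_j\mathcal B_j[f_j]$.

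\emph{Step 2 (explicit operator and properties (1)--(2)).} On a domain $G$ star-shaped with respect to a ball $B$, choose $\theta\in C_c^\infty(B)$ with $\int\theta=1$ and, for $f\in L^p(G)$ with $\int_G f=0$, define
\[
\mathcal B_G[f](x):=\int_G f(y)\,\frac{x-y}{|x-y|^{d}}\int_{|x-y|}^{\infty}\theta\Big(y+s\,\frac{x-y}{|x-y|}\Big)s^{d-1}\,ds\,dy .
\]
Differentiating under the integral sign one checks directly that $\nabla\cdot\mathcal B_G[f]=f$ in $G$ and that $\mathcal B_G[f]$ vanishes near $\pa G$, hence lies in $[W_0^{1,p}(G)]^d$; this is (2). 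For (1), differentiation splits $\nabla\mathcal B_G[f]$ into a principal-value integral against a kernel that is homogeneous of degree $-d$ in $x-y$ with mean zero over spheres and smooth in the base point---a Calder\'on--Zygmund kernel---plus a term with a weakly singular, integrable kernel, plus a bounded term. The Calder\'on--Zygmund $L^p$-theorem ($1<p<\infty$) and Young's convolution inequality then give $\|\nabla\mathcal B_G[f]\|_{L^p(G)}\le C\|f\|_{L^p(G)}$, and the Poincar\'e inequality (valid since the trace vanishes) upgrades this to the full $W^{1,p}(G)$ bound. Summing over $j$ yields (1).

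\emph{Step 3 (property (3)).} When $f_j=\nabla\cdot g_j$ with $g_j\in L^r(\om_j)$ and $g_j\cdot r\equiv 0$ on $\pa\om_j$, insert this into the formula of Step 2 and integrate by parts in $y$, moving the divergence onto the kernel. The boundary term vanishes thanks to $g_j\cdot r=0$ on $\pa\om_j$, and one is left with $\mathcal B_{\om_j}[\nabla\cdot g_j](x)=\int_{\om_j}\widetilde K(x,x-y)\,g_j(y)\,dy$, where $\widetilde K(x,\cdot)$ is again homogeneous of degree $-d$ with vanishing spherical mean, i.e.\ a Calder\'on--Zygmund kernel (plus, as before, harmless weakly singular and bounded remainders). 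Hence $\|\mathcal B_{\om_j}[\nabla\cdot g_j]\|_{L^r(\om_j)}\le C\|g_j\|_{L^r(\om_j)}$, and summing over $j$ gives $\|\mathcal B[f]\|_{L^r(\om)}\le C\|g\|_{L^r(\om)}$.

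\emph{Main obstacle.} The delicate point is \textbf{Step 1}: one must produce the decomposition $f=\sum_j f_j$ that simultaneously (i) localizes each piece into a single star-shaped patch, (ii) preserves the zero-mean constraint patch by patch, (iii) preserves the structure $f_j=\nabla\cdot g_j$ with $g_j$ of vanishing normal trace, and (iv) is bounded on $L^p$ and $L^r$. This requires the ``mass transport along a chain of overlapping balls'' construction together with careful bookkeeping against the partition of unity; once it is in hand, everything else is a routine application of Calder\'on--Zygmund theory to the explicit kernel of Step 2. This is precisely why the lemma is quoted from \cite{Bog80, BS90, Gal94} rather than reproved here.
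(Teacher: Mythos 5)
The paper does not prove Lemma \ref{L3.4}; it is stated as a quoted result with the remark ``for detail, we refer to \cite{Bog80, BS90, Gal94}.'' So there is no in-paper proof to compare your attempt against. That said, your sketch reproduces the standard Bogovskii construction that those references use, and it is correct in its essentials: localize to a finite cover by star-shaped subdomains via a mass-transporting partition of unity; write down the explicit Bogovskii integral on each star-shaped patch; read off (1)--(2) from differentiation under the integral plus the Calder\'on--Zygmund theorem and Poincar\'e; and obtain (3) by inserting $f=\nabla\cdot g$, integrating by parts in $y$ (the boundary term dropping out because of the vanishing normal trace), and applying the singular-integral bound to the resulting kernel acting on $g$. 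You also correctly flag the localization step as the one demanding care, since it must simultaneously preserve zero means, preserve the divergence-form structure of $f$ with vanishing normal trace, and remain bounded on both $L^p$ and $L^r$.

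Two small points of precision worth noting. First, the assertion that $\mathcal B_G[f]$ ``vanishes near $\pa G$'' holds literally only for $f\in C_c^\infty(G)$ with zero mean; for general $f\in L^p$ one concludes membership in $W_0^{1,p}(G)$ by a density argument, not by compact support. Second, in Step 3 the kernel $\widetilde K(x,x-y)$ obtained after integrating by parts is of Calder\'on--Zygmund type only modulo lower-order (weakly singular and bounded) remainders and a bounded zeroth-order piece coming from differentiating the mollifier factor; you do mention the remainders, but one should also record that the resulting operator is of order zero (not order $-1$), which is why (3) gives an $L^r\to L^r$ bound rather than a gain of a derivative. Neither point is a gap in the strategy; they are the usual bookkeeping details that \cite{BS90, Gal94} handle.
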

Then, we can set a perturbed energy functional ${\mathscr E}^{\sigma_1, \sigma_2}$ using the operator $\mathcal{B}$ in Lemma \ref{L3.4} with $p=r=2$ and $d=3$:

\[
{\mathscr E}^{\sigma_1, \sigma_2} := {\mathscr E}(t) -\sigma_1\into \rho u \cdot \mathcal{B}[\rho-\rho_c]\,dx -\sigma_2 \into nv \cdot \mathcal{B}[n-n_c]\,dx.
\]

\begin{lemma}\label{L3.5}
The perturbed energy functional ${\mathscr E}^{\sigma_1, \sigma_2}$ satisfies the following relation:
\[
\frac{d}{dt}{\mathscr E}^{\sigma_1, \sigma_2}(t) +\mathscr{D}^{\sigma_1, \sigma_2} (t) = 0, \quad t \ge 0,
\]
where $\mathscr{D}^{\sigma_1, \sigma_2}$ is written by
\[
\begin{aligned}
\mathscr{D}^{\sigma_1, \sigma_2} &:=\mathscr{D}(t) + \sigma_1\bigg(\into (\rho u\otimes u):\nabla \mathcal{B}[\rho-\rho_c]\,dx +\into (\rho-\rho_c)^2\,dx \\
&\hspace{2.5cm} -\into \rho(u-v)\cdot \mathcal{B}[\rho-\rho_c]\,dx +\into \rho u \cdot \mathcal{B}[\pa_t \rho]\,dx \bigg)\\
&\quad +\sigma_2 \bigg( \into (nv \otimes v) : \nabla \mathcal{B}[n-n_c]\,dx +\into (n^\gamma-n_c^\gamma)(n-n_c)\,dx \\
&\hspace{1.5cm} - \into \nabla v : \nabla \mathcal{B}[n-n_c]\,dx -\into \rho(v-u)\cdot \mathcal{B}[n-n_c]\,dx +\into nv\cdot \mathcal{B}[\pa_t n]\,dx\bigg).
\end{aligned}
\]
\end{lemma}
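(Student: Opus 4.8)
The plan is to differentiate the perturbed energy functional ${\mathscr E}^{\sigma_1,\sigma_2}$ term by term and assemble the result. The first term $\frac{d}{dt}{\mathscr E}(t)$ is already handled by Lemma \ref{L3.1}, which gives $\frac{d}{dt}{\mathscr E}(t) = -\mathscr{D}(t)$. So the real work is to compute the time derivatives of the two correction terms $\sigma_1 \into \rho u \cdot \mathcal{B}[\rho-\rho_c]\,dx$ and $\sigma_2 \into n v \cdot \mathcal{B}[n-n_c]\,dx$, and to verify that what comes out matches the stated $\mathscr{D}^{\sigma_1,\sigma_2}$ exactly. I would do the $\rho u$--term first and then the $n v$--term by the same scheme, since the computations are parallel.

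For the $\rho u$ correction, I would write $\frac{d}{dt}\into \rho u \cdot \mathcal{B}[\rho-\rho_c]\,dx = \into \partial_t(\rho u) \cdot \mathcal{B}[\rho-\rho_c]\,dx + \into \rho u \cdot \partial_t \mathcal{B}[\rho-\rho_c]\,dx$. For the first piece, substitute the momentum equation $\partial_t(\rho u) = -\nabla\cdot(\rho u\otimes u) - \nabla\rho + \rho(v-u)$ from \eqref{A-1}$_2$; integrating the convective term by parts (the boundary term vanishes because $\mathcal{B}[\rho-\rho_c] \in [W_0^{1,2}(\om)]^3$ by Lemma \ref{L3.4}(1), so it has zero trace on $\pa\om$) produces $\into (\rho u\otimes u):\nabla\mathcal{B}[\rho-\rho_c]\,dx$. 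The pressure term $-\into \nabla\rho \cdot \mathcal{B}[\rho-\rho_c]\,dx$ integrates by parts (again no boundary term) to $\into \rho\,\nabla\cdot\mathcal{B}[\rho-\rho_c]\,dx = \into \rho(\rho-\rho_c)\,dx$ using property (2) of Lemma \ref{L3.4}, and since $\into(\rho-\rho_c)\,dx = 0$ one may add $-\rho_c\into(\rho-\rho_c)\,dx = 0$ to rewrite this as $\into(\rho-\rho_c)^2\,dx$. The drag term gives $\into \rho(v-u)\cdot\mathcal{B}[\rho-\rho_c]\,dx$, which contributes $-\into\rho(u-v)\cdot\mathcal{B}[\rho-\rho_c]\,dx$ after the overall sign from the $-\sigma_1$ in front. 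For the second piece, since $\mathcal{B}$ is linear and time-independent as an operator, $\partial_t\mathcal{B}[\rho-\rho_c] = \mathcal{B}[\partial_t(\rho-\rho_c)] = \mathcal{B}[\partial_t\rho]$, giving $\into \rho u\cdot\mathcal{B}[\partial_t\rho]\,dx$. Collecting, $-\sigma_1 \frac{d}{dt}\into\rho u\cdot\mathcal{B}[\rho-\rho_c]\,dx$ equals exactly the $\sigma_1$--block of $-\mathscr{D}^{\sigma_1,\sigma_2}$. The $n v$ computation is identical in structure using \eqref{A-1}$_4$, with the extra viscosity term $\Delta v$ integrating by parts to $-\into\nabla v:\nabla\mathcal{B}[n-n_c]\,dx$ and the pressure $\nabla p(n) = \nabla n^\gamma$ producing $\into(n^\gamma - n_c^\gamma)(n-n_c)\,dx$ after subtracting the mean-zero constant.

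The main obstacle I anticipate is purely bookkeeping: making sure every integration by parts is legitimate (which it is, since $\mathcal{B}[\cdot]$ has zero boundary trace and $v$ vanishes on $\pa\om$, so all boundary terms drop), that the sign conventions from the $-\sigma_i$ prefactors are tracked correctly through each term, and that the mean-zero trick $\into(\rho-\rho_c)\,dx = 0$ is invoked at the right places to turn $\into\rho(\rho-\rho_c)\,dx$ into $\into(\rho-\rho_c)^2\,dx$ and similarly for $n$. There is no analytic subtlety here beyond what Lemmas \ref{L3.1} and \ref{L3.4} already supply; the lemma is essentially the statement that the $\mathcal{B}$-corrected energy satisfies a clean identity, and the proof is a direct verification.
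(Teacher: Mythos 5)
Your proposal follows essentially the same route as the paper: use Lemma \ref{L3.1} for the $\mathscr{E}(t)$ part, apply the product rule to each $\mathcal{B}$-correction, substitute the momentum equations \eqref{A-1}$_2$ and \eqref{A-1}$_4$ for $\partial_t(\rho u)$ and $\partial_t(nv)$, integrate by parts using the zero trace of $\mathcal{B}[\cdot]$, pass $\partial_t$ through $\mathcal{B}$ by linearity, and invoke the mean-zero identity $\into(\rho-\rho_c)\,dx = \into(n-n_c)\,dx = 0$ to convert $\into\rho(\rho-\rho_c)\,dx$ and $\into n^\gamma(n-n_c)\,dx$ into the quadratic/monotone forms appearing in $\mathscr{D}^{\sigma_1,\sigma_2}$. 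The only cosmetic slip is attributing the rewriting of $\rho(v-u)$ as $-\rho(u-v)$ to the prefactor $-\sigma_1$; that is plain algebra, not a sign inherited from the prefactor, but it does not affect the argument.
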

\begin{proof}
From the definition of ${\mathscr E}^{\sigma_1, \sigma_2}$ and Lemma \ref{L3.1}, it suffices to show that
\[
\mathscr{D}^{\sigma_1, \sigma_2}(t) = \mathscr{D} + \sigma_1 \frac{d}{dt} \into \rho u \cdot \mathcal{B}[\rho-\rho_c]\,dx + \sigma_2 \frac{d}{dt}\into nv \cdot \mathcal{B}[n-n_c]\,dx.
\]
First, we have
\[
\begin{aligned}
\frac{d}{dt}\into \rho u \cdot \mathcal{B}[\rho-\rho_c]\,dx &= \into \pa_t (\rho u)\cdot \mathcal{B}[\rho-\rho_c]\,dx + \into \rho u \cdot \mathcal{B}[\pa_t \rho]\,dx\\
&=: J_1^1 + J_1^2.
\end{aligned}
\]
For $J_1^1$, one has

\[
\begin{aligned}
&J_1^1= -\into \lt( \nabla \cdot (\rho u \otimes u) + \nabla \rho + \rho(u-v)\rt)\cdot \mathcal{B}[\rho-\rho_c]\,dx \\
&= \into (\rho u \otimes u):\nabla \mathcal{B}[\rho-\rho_c]\,dx +\into \rho(\rho-\rho_c)\,dx -\into \rho(u-v)\cdot\mathcal{B}[\rho-\rho_c]\,dx \\
&=  \into (\rho u \otimes u):\nabla \mathcal{B}[\rho-\rho_c]\,dx +\into (\rho-\rho_c)^2\,dx -\into \rho(u-v)\cdot\mathcal{B}[\rho-\rho_c]\,dx.
\end{aligned}
\]
Here, note that the homogeneous boundary condition of $\mathcal{B}[\rho-\rho_c]$ was used when integrating by parts. Similarly, we estimate
\[
\begin{aligned}
\frac{d}{dt}\into nv \cdot \mathcal{B}[n-n_c]\,dx &= \into \pa_t (nv)\cdot \mathcal{B}[n-n_c]\,dx + \into nv \cdot \mathcal{B}[\pa_t n]\,dx\\
&=: J_2^1 + J_2^2.
\end{aligned}
\]
For $J_2^1$,
\[
\begin{aligned}
J_2^1&=-\into \lt( \nabla \cdot (nv\otimes v) + \nabla(n^\gamma) -\Delta v + \rho(v-u)\rt)\cdot \mathcal{B}[n-n_c]\,dx\\
&= \into (nv\otimes v) : \nabla \mathcal{B}[n-n_c]\,dx + \into n^\gamma (n-n_c)\,dx -\into \nabla v : \nabla \mathcal{B}[n-n_c]\,dx \\
&\quad - \into \rho(v-u)\cdot \mathcal{B}[n-n_c]\,dx \\
&= \into (nv\otimes v) : \nabla \mathcal{B}[n-n_c]\,dx + \into (n^\gamma -n_c^\gamma)(n-n_c)\,dx -\into \nabla v : \nabla \mathcal{B}[n-n_c]\,dx \\
&\quad - \into \rho(v-u)\cdot \mathcal{B}[n-n_c]\,dx,
\end{aligned}
\]
which completes the proof.
\end{proof}

\subsection{Proof of Theorem \ref{T1.2}} Now, we are ready to prove Theorem \ref{T1.2}. Note that for sufficiently small $\sigma_1$ and $\sigma_2$, we have
\[
{\mathscr E}^{\sigma_1, \sigma_2}(t) \approx {\mathscr E}(t) \approx \mathscr{L}(t), \quad \forall t \ge 0.
\]
Hence, it suffices to show that for sufficiently small $\sigma_1$ and $\sigma_2$,
\[
C\mathscr{L}(t) \le \mathscr{D}^{\sigma_1, \sigma_2}(t), \quad \forall t \ge 0.
\]
for some constant $C>0$ independent of $t$, since the combination of above relation with the equivalence ${\mathscr E}^{\sigma_1, \sigma_2}(t)\approx \mathscr{L}(t)$ and Lemma \ref{L3.5}  implies that there exists a constant $C>0$ independent of $t$ such that
\[
\frac{d}{dt}{\mathscr E}^{\sigma_1,\sigma_2} (t) + C{\mathscr E}^{\sigma_1,\sigma_2} (t) \le 0.
\]
First, we rewrite $\mathscr{D}^{\sigma_1, \sigma_2}$ as

\begin{align*}
\mathscr{D}^{\sigma_1, \sigma_2} &:=\mathscr{D}(t) + \sigma_1\bigg(\into (\rho u\otimes u):\nabla \mathcal{B}[\rho-\rho_c]\,dx +\into (\rho-\rho_c)^2\,dx \\
&\hspace{2.5cm} -\into \rho(u-v)\cdot \mathcal{B}[\rho-\rho_c]\,dx -\into \rho u \cdot \mathcal{B}[\nabla\cdot( \rho u)]\,dx \bigg)\\
&\quad +\sigma_2 \bigg( \into (nv \otimes v) : \nabla \mathcal{B}[n-n_c]\,dx +\into (n^\gamma-n_c^\gamma)(n-n_c)\,dx \\
&\hspace{1.5cm} - \into \nabla v : \nabla \mathcal{B}[n-n_c]\,dx -\into \rho(v-u)\cdot \mathcal{B}[n-n_c]\,dx -\into nv\cdot \mathcal{B}[\nabla\cdot(nv)]\,dx\bigg)\\
&=: \sum_{i=1}^{10} K_i.
\end{align*}
We estimate $K_i$'s one by one as follows:\\

\noindent $\diamond$ (Estimates for $K_2$) We use Cauchy-Schwarz inequality, Young's inequality and Lemma \ref{L3.4} to obtain
\[
\begin{aligned}
K_2 &\ge -\sigma_1 \|u\|_{L^\infty} \|\rho u\|_{L^2}\|\nabla \mathcal{B}[\rho-\rho_c]\|_{L^2} \\
&\ge -C\sigma_1 (\bar\rho)^{1/2}\|u\|_{L^\infty}\lt(\into \rho |u|^2\,dx\rt)^{1/2} \lt(\into (\rho-\rho_c)^2\,dx\rt)^{1/2}\\
&\ge -C\sigma_1^{1/2} \into \rho |u|^2\,dx -\sigma_1^{3/2} \into (\rho-\rho_c)^2\,dx,
\end{aligned}
\]
where $C = C(\bar\rho, \|u\|_{L^\infty}, \Omega)$ is a positive constant.\\

\noindent $\diamond$ (Estimates for $K_4$) Cauchy-Schwarz inequality and Young's inequality yield
\[
\begin{aligned}
K_4 &\ge \sigma_1 \|\rho(u-v)\|_{L^2}\|\mathcal{B}[\rho-\rho_c]\|_{L^2}\\
&\ge -C\sigma_1 \lt(\into \rho|u-v|^2\,dx\rt)^{1/2}\lt(\into (\rho-\rho_c)^2\,dx\rt)^{1/2}\\
&\ge -C\sigma_1^{1/2} \into \rho|u-v|^2\,dx -\sigma^{3/2}\into (\rho-\rho_c)^2\,dx,
\end{aligned}
\]
where $C = C(\bar\rho,  \Omega)$ is a positive constant.\\

\noindent $\diamond$ (Estimates for $K_5$) Recall that $\rho u \cdot r \equiv 0$ on $\pa\om$. Thus, (3) of Lemma \ref{L3.4} implies
\[
\|\mathcal{B}[\nabla\cdot(\rho u)]\|_{L^2} \le C\|\rho u\|_{L^2},
\]
where $C=C(\Omega)$ is a positive constant. Thus,
\[
K_5 \ge -C\sigma_1 \|\rho u\|_{L^2}^2 \ge -C\sigma_1 \into \rho |u|^2\,dx,
\]
where $C=C(\bar\rho, \Omega)$ is a positive constant.\\

\noindent $\diamond$ (Estimates for $K_6$) Similarly to the estimates for $K_2$,
\[
K_6 \ge -C\sigma_2^{1/2} \into n|v|^2\,dx -\sigma_2^{3/2} \into (n-n_c)^2\,dx,
\]
where $C = C(\bar n, \|v\|_{L^\infty}, \Omega)$ is a positive constant.\\

\noindent $\diamond$ (Estimates for $K_7$) Note that
\[
f(x) := x^\gamma - n_c^\gamma -\gamma (n_c)^{\gamma-1}(n-n_c) \ge 0, \quad \forall x \in [0,\bar n].
\]
Thus,
\[
K_7 \ge C\sigma_2 \into (n-n_c)^2\,dx,
\]
where $C=C(\gamma, n_c)$ is a positive constant.\\

\noindent $\diamond$ (Estimates for $K_8$) We use Cauchy-Schwarz inequality and Young's inequality to have

\[
K_8 \ge -\sigma_2 \|\nabla v\|_{L^2} \|\nabla \mathcal{B}[n-n_c]\|_{L^2} \ge -C\sigma_2^{1/2}\into |\nabla v|^2\,dx - \sigma_2^{3/2} \into (n-n_c)^2\,dx,
\]
where $C= C(\om)$ is a positive constant.\\

\noindent $\diamond$ (Estimates for $K_9$ and $K_{10}$) Similarly to estimates for $K_4$ and $K_5$, we can get
\[
\begin{aligned}
&K_9 \ge -C\sigma_2^{1/2}\into \rho|u-v|^2\,dx -\sigma_2^{3/2}\into (n-n_c)^2\,dx,\\
&K_{10} \ge -C\sigma_2 \into n|v|^2\,dx,
\end{aligned}
\]
where $C= C(\bar n, \om)$ is a positive constant.\\

Now, we gather all the estimates for $K_i$'s to obtain

\bq\label{T1-2.1}
\begin{aligned}
\mathscr{D}^{\sigma_1, \sigma_2}(t) &\ge \mathscr{D}(t) - C\sigma_1^{1/2}(1+\sigma_1^{1/2}) \into \rho |u|^2\,dx -C\sigma_2^{1/2}(1+\sigma_2^{1/2}) \into n|v|^2\,dx  \\
&\quad - C\sigma_2^{1/2} \into |\nabla v|^2\,dx  -C(\sigma_1^{1/2} + \sigma_2^{1/2})\into \rho|u-v|^2\,dx\\
&\quad +(\sigma_1 - C\sigma_1^{3/2}) \into (\rho-\rho_c)^2\,dx + (\sigma_2 - C\sigma_2^{3/2}) \into (n-n_c)^2\,dx,
\end{aligned}
\eq
where $C=C(\bar\rho, \bar n, \|u\|_{L^\infty}, \|v\|_{L^\infty}, \gamma, n_c, \om)$ is a positive constant. Then, we apply Lemma \ref{L3.3} to \eqref{T1-2.1} and choose sufficiently small $\sigma_1$ and $\sigma_2$ to get

\[
\begin{aligned}
\mathscr{D}^{\sigma_1, \sigma_2}(t) &\ge \frac12\mathscr{D}(t) - C\sigma_1^{1/2}(1+\sigma_1^{1/2}) \into \rho |u|^2\,dx -C\sigma_2^{1/2}(1+\sigma_2^{1/2}) \into n|v|^2\,dx  \\
&\quad+ \frac12\mathscr{D}(t) - C\sigma_2^{1/2} \into |\nabla v|^2\,dx  -C(\sigma_1^{1/2} + \sigma_2^{1/2})\into \rho|u-v|^2\,dx\\
&\quad +(\sigma_1 - C\sigma_1^{3/2}) \into (\rho-\rho_c)^2\,dx + (\sigma_2 - C\sigma_2^{3/2}) \into (n-n_c)^2\,dx\\
&\ge  c\lt( \mathscr{L}^-(t) + \into (\rho-\rho_c)^2\,dx +\into (n-n_c)^2\,dx\rt) = c\mathscr{L}(t),
\end{aligned}
\]
where $c=c(\bar\rho, \bar n, \|u\|_{L^\infty}, \|v\|_{L^\infty}, \gamma, n_c, \om)$ is a positive constant and this completes the proof.\\

\vspace{0.4cm}

%

\subsection{Proof of Corollary \ref{C2.1}} We would combine the result in Theorem \ref{T1.2} with the estimates in the proof of  Theorem \ref{T1.1} to get the desired result. First, note that
\[
\begin{aligned}
\rho\int_{\rho_c}^\rho \frac{z-\rho_c}{z^2}\,dz &= \rho\log\lt(\frac{\rho}{\rho_c}\rt) + \rho_c -\rho\\
&= \rho_c \lt(\frac{\rho}{\rho_c}\log\lt(\frac{\rho}{\rho_c}\rt) + 1-\frac{\rho}{\rho_c}\rt)\\
&= \rho_c \lt( g e^g + 1 - e^g\rt) \approx |g|^2.
\end{aligned}
\]
Thus, Theorem \ref{T1.2} implies

\bq\label{C3-1.1}
\lt(\|g(t)\|_{L^2}^2 + \|u(t)\|_{L^2}^2 + \|h(t)\|_{L^2}^2 + \|v(t)\|_{L^2}^2\rt)\le CE(t) \le Ce^{-\eta_0 t} \quad \forall t \ge 0,
\eq
where $C$ and $\eta_0$ are constants independent of $t$. Now, we can deduce from the proofs in Corollary \ref{C2.1} that for each $1 \le \ell \le s$ and $t\ge0$, 
\[
\begin{aligned}
\frac{d}{dt}& \lt[\tilde{\ms}(\ell;t) + \lambda_2 \tilde{\mathcal{T}}(\ell;t) \rt] +\frac{ \lambda_3}{4} \lt(\tilde{\ms}(\ell;t) + \tilde{\mathcal{T}}(\ell;t) + \|\nabla v\|_{\mathfrak{X}^\ell}^2\rt)\\
&\le C_3\e \lt(\tilde{\ms}(\ell;t) + \tilde{\mathcal{T}}(\ell;t)\rt) + C(\ms(\ell-1;t) + W(\ell-1;t)),
\end{aligned}
\]
where $C=C(\ell,s,\gamma,\om)$ is independent of $t$, $\lambda_2$, $\lambda_3$ and $C_3$ are given in Corollary \ref{C2.1} and  we used the equivalences
\[
\ms(\ell;t) \approx \tilde{\ms}(\ell;t), \quad \mathcal{T}(\ell;t) \approx \tilde{\mathcal{T}}(\ell;t), \quad \ms(\ell;t) + W(\ell;t) \approx \ms(\ell;t) + \mathcal{T}(\ell;t).
\]
Here, \eqref{C3-1.1} can be rewritten as
\[
\ms(0;t) + W(0;t) \le Ce^{-\eta_0 t}, \quad \forall t \ge 0.
\]
Since we assumed that $C_3\e < \frac{\lambda_3}{8}$, we use the induction argument on $\ell$ and the equivalences to conclude the proof.

%
%
%
%

\section*{Acknowledgments}
The author was supported by NRF grant (No. 2019R1A6A1A10073437). The author also appreciates Prof. Young-Pil Choi for fruitful discussions.

%

\appendix
\section{Proof of Lemma \ref{L2.5}}\label{app.A}
In this appendix, we present the proof of Lemma \ref{L2.5}. We estimate $\nabla_{t,x}^\alpha v$ and $\nabla_{t,x}^\alpha h$ separately as follows:\\

\noindent $\bullet$ (Step A: Estimates for $\nabla_{t,x}^\alpha v$): First, we have

\begin{align*}
\frac12\frac{d}{dt}\|\nabla_{t,x}^\alpha v\|_{L^2}^2 &= \int_\om \nabla_{t,x}^\alpha (v \cdot \nabla v)\cdot \nabla_{t,x}^\alpha v\,dx\\
&\quad - \int_\om \nabla_{t,x}^\alpha \lt(\frac{\nabla p(1+h)}{1+h}\rt) \cdot \nabla_{t,x}^\alpha v\,dx\\
&\quad + \int_\om \nabla_{t,x}^\alpha \lt(\frac{\Delta v}{1+h}\rt) \cdot \nabla_{t,x}^\alpha v\,dx\\
&\quad -\int_\om \nabla_{t,x}^\alpha \lt(\frac{e^g}{1+h} (v-u)\rt)\cdot \nabla_{t,x}^\alpha v\,dx\\
&=: \sum_{i=1}^4 \mathcal{L}_i.
\end{align*}
We  estimate $\mathcal{L}_i$'s one by one as follows:\\

\vspace{0.2cm}

\noindent $\diamond$ (Estimates for $\mathcal{L}_1$): Here, we get
\[
\begin{aligned}
\mathcal{L}_1 &=-\int_\om v \cdot \nabla (\nabla_{t,x}^\alpha v) \cdot \nabla_{t,x}^\alpha v\,dx -\sum_{\mu\le \alpha, \ |\mu|\neq 0} \binom{\alpha}{\mu} \int_\om \nabla_{t,x}^\mu v \cdot \nabla (\nabla_{t,x}^{\alpha-\mu} v) \cdot \nabla_{t,x}^\alpha v\,dx\\
&= \frac12 \int_\om (\nabla \cdot v) |\nabla_{t,x}^\alpha v|^2 \,dx -\sum_{\mu\le \alpha, \ |\mu|\neq 0} \binom{\alpha}{\mu} \int_\om \nabla_{t,x}^\mu v \cdot \nabla (\nabla_{t,x}^{\alpha-\mu} v) \cdot \nabla_{t,x}^\alpha v\,dx\\
&\le C\e   \mathcal{S}(\ell;t), 
\end{aligned}
\]
where $C$ is independent of $T$.\\

\noindent $\diamond$ (Estimates for $\mathcal{L}_2$): In this case, we have

\[
\begin{aligned}
\mathcal{L}_2 &= \int_\om \nabla_{t,x}^\alpha \lt( \frac{\gamma}{\gamma-1}(1+h)^{\gamma-1}\rt) \nabla \cdot (\nabla_{t,x}^\alpha v)\,dx\\
&= \gamma\int_\om \nabla_{t,x}^{\alpha^-} \lt((1+h)^{\gamma-2} \nabla_{t,x}^{\delta} h\rt)\nabla \cdot (\nabla_{t,x}^\alpha v)\,dx\\
&= \gamma \sum_{\mu\le \alpha^-, \ |\mu|\neq 0}\binom{\alpha^-}{\mu}\int_\om \nabla_{t,x}^\mu ((1+h)^{\gamma-2}) (\nabla_{t,x}^{\alpha-\mu} h) \nabla  \cdot (\nabla_{t,x}^\alpha v)\,dx\\
&\quad + \gamma\int_\om (1+h)^{\gamma-2} (\nabla_{t,x}^\alpha h) \nabla \cdot (\nabla_{t,x}^\alpha v)\,dx,
\end{aligned}
\]
where $\alpha^-$ and $\delta$ denote multi-indexes satisfying 
\bq\label{index_alp}
\alpha^- + \delta = \alpha, \quad |\delta|=1.
\eq
Here, we claim
\bq\label{ind_1}
\|\nabla_{t,x}^\alpha \lt( (1+h)^k \rt)\|_{L^2} \le C\sqrt{ \mathcal{S}(\ell;t)}, \quad 1 \le \ell \le s, \quad |\alpha|=\ell, \quad k\in\R\setminus\{ 0\},
\eq
where $C=C(\ell,k,\om)$ is independent of $T$. We argue by induction on $\ell$. If $|\alpha|=1$, then we use $\mh(s;T)\le \e^2  \ll1 $ to get
\[
\|\nabla_{t,x}^\alpha \lt( (1+h)^k \rt)\|_{L^2} = \|k(1+h)^{k-1} \nabla_{t,x}^\alpha h\|_{L^2} \le C\|\nabla_{t,x}^\alpha h\|_{L^2} \le C\sqrt{ \ms(1;t)}.
\]
Now, assume \eqref{ind_1} holds for every $\alpha$ with $|\alpha|=p$ and $1\le p \le \ell-1$. Then for $|\alpha|=\ell$, we use the induction hypothesis and Sobolev inequality to get
\[
\begin{aligned}
\|\nabla_{t,x}^\alpha ((1+h)^k)\|_{L^2} &=\lt\| k (1+h)^{k-1} \nabla_{t,x}^\alpha h + k\sum_{\mu\le\alpha^-, \ |\mu| \neq 0} \binom{\alpha^-}{\mu} \nabla_{t,x}^\mu ((1+h)^{k-1}) \nabla_{t,x}^{\alpha-\mu} h\rt\|\\
&\le  C\|\nabla_{t,x}^\alpha h\|_{L^2} +C\sum_{\mu<\alpha^-, \ |\mu|\neq0} \|\nabla_{t,x}^\mu ((1+h))^{k-1}\|_{L^4}\|\nabla_{t,x}^{\alpha-\mu} h\|_{L^4}\\
&\quad +C\|\nabla_{t,x}^\delta h\|_{L^\infty} \|\nabla_{t,x}^{\alpha^-} ((1+h)^{k-1})\|_{L^2}\\
&\le C\sqrt{ \mathcal{S}(\ell;t)} + C\sum_{\mu<\alpha^-, \ |\mu|\neq0} \|\nabla_{t,x}^\mu ((1+h))^{k-1}\|_{H^1}\|\nabla_{t,x}^{\alpha-\mu} h\|_{H^1}\\
&\le  C\sqrt{ \mathcal{S}(\ell;t)},
\end{aligned}
\]
which implies \eqref{ind_1}. Thus, we get
\[
\begin{aligned}
\mathcal{L}_2 &\le C \e \sqrt{\mathcal{S}(\ell;t)}\|\nabla (\nabla_{t,x}^\alpha v)\|_{L^2} +  \gamma\int_\om (1+h)^{\gamma-2} (\nabla_{t,x}^\alpha h) \nabla \cdot (\nabla_{t,x}^\alpha v)\,dx\\
&\le C\e   \mathcal{S}(\ell;t) + C\e  \|\nabla (\nabla_{t,x}^\alpha v)\|_{L^2}^2 + \gamma\int_\om (1+h)^{\gamma-2} (\nabla_{t,x}^\alpha h) \nabla \cdot (\nabla_{t,x}^\alpha v)\,dx,
\end{aligned}
\]
where $C=C(\ell,s,\gamma,\om)$ is independent of $T$.\\

\noindent $\diamond$ (Estimates for $\mathcal{L}_3$): For this,

\[
\begin{aligned}
\mathcal{L}_3 &= \int_\om \frac{\nabla_{t,x}^\alpha (\Delta v)}{1+h} \cdot \nabla_{t,x}^\alpha v\,dx + \sum_{\mu<\alpha} \binom{\alpha}{\mu} \int_\om \nabla_{t,x}^\mu (\Delta v) \nabla_{t,x}^{\alpha-\mu}\lt(\frac{1}{1+h}\rt) \cdot \nabla_{t,x}^\alpha v\,dx\\
&=: \mathcal{L}_3^1 + \mathcal{L}_3^2.
\end{aligned}
\]
For $\mathcal{L}_3^1$, one has

\[
\begin{aligned}
\mathcal{L}_3^1 &= -\int_\om \frac{|\nabla (\nabla_{t,x}^\alpha v)|^2}{1+h}\,dx + \int_\om \frac{\nabla h}{(1+h)^2} \cdot \nabla(\nabla_{t,x}^\alpha v)\cdot \nabla_{t,x}^\alpha v\,dx\\
&\le -\frac12\|\nabla(\nabla_{t,x}^\alpha v)\|_{L^2}^2 +C\|\nabla h\|_{L^\infty} \|\nabla(\nabla_{t,x}^\alpha v)\|_{L^2} \|\nabla_{t,x}^\alpha v\|_{L^2}\\
&\le -\frac12\|\nabla(\nabla_{t,x}^\alpha v)\|_{L^2}^2 +C\e  \sqrt{S(\ell;t)} \|\nabla(\nabla_{t,x}^\alpha v)\|_{L^2}.
\end{aligned}
\]
For $\mathcal{L}_3^2$, we use Sobolev inequality to get

\begin{align*}
\mathcal{L}_3^2 &= \int_\om \nabla_{t,x}^\alpha \lt(\frac{1}{1+h}\rt) \Delta v \cdot \nabla_{t,x}^\alpha v\,dx\\
&\quad + \sum_{\mu<\alpha, \ |\mu|\neq 0}\binom{\alpha}{\mu} \int_\om \nabla_{t,x}^\mu (\Delta v) \nabla_{t,x}^{\alpha-\mu}\lt(\frac{1}{1+h}\rt) \cdot \nabla_{t,x}^\alpha v\,dx\\
&\le \|\Delta v\|_{L^4} \lt\|\nabla_{t,x}^\alpha\lt(\frac{1}{1+h}\rt)\rt\|_{L^2}\|\nabla_{t,x}^\alpha v\|_{L^4}\\
&\quad - \sum_{\mu<\alpha, \ |\mu|\neq 0}\binom{\alpha}{\mu} \int_\om \nabla_{t,x}^\mu(\nabla v) : \nabla\lt[ \nabla_{t,x}^{\alpha-\mu}\lt( \frac{1}{1+h}\rt) \nabla_{t,x}^\alpha v\rt]\,dx\\
&\le C\lt(\e  + \sqrt{ \mathcal{S}(\ell;t)}\rt) \lt( \|\nabla(\nabla_{t,x}^\alpha v)\|_{L^2} + \|\nabla_{t,x}^\alpha v\|_{L^2}\rt),
\end{align*}
where $C=C(\ell,s,\om)$ is independent of $T$. Thus, we use the smallness of $\e $ to get
\[
\mathcal{L}_3 \le -\frac38 \|\nabla(\nabla_{t,x}^\alpha v)\|_{L^2}^2 + C\e  \mathcal{S}(\ell;t),
\]
where $C=C(\ell,s,\om)$ is independent of $T$. \\

\noindent $\diamond$ (Estimates for $\mathcal{L}_4$) Now, we have

\[\begin{aligned}
\mathcal{L}_4 &= -\int_\om \frac{e^g}{1+h}\nabla_{t,x}^\alpha(v-u)\cdot \nabla_{t,x}^\alpha v\,dx -\sum_{\mu\le\alpha, \ |\mu|\neq0} \binom{\alpha}{\mu} \nabla_{t,x}^\mu \lt( \frac{e^g}{1+h}\rt) \nabla_{t,x}^{\alpha-\mu} (v-u)\cdot \nabla_{t,x}^\alpha v\,dx\\
&=: \mathcal{L}_4^1 + \mathcal{L}_4^2.
\end{aligned}\]
For $\mathcal{L}_4^1$, we consider two cases; (i) $\nabla_{t,x}^\alpha = \nabla^\alpha$ (only $x$-derivatives) and (ii) $\nabla_{t,x}^\alpha = \partial_t^m \nabla^\beta$ with $m + |\beta| = |\alpha|$, $m\ge 1$.\\

\noindent $\circ$ (The case (i) for $\mathcal{L}_4^1$) In this case, we get
\[\begin{aligned}
\mathcal{L}_4^1 &=- \int_\om \frac{e^g}{1+h} |\nabla^\alpha v|^2\,dx + \int_\om \frac{e^g}{1+h}\nabla^\alpha  u \cdot \nabla^\alpha v \,dx \\
&= - \int_\om \frac{e^g}{1+h} |\nabla^\alpha v|^2\,dx - \int_\om \frac{e^g}{1+h}\nabla^{\alpha^-}  u \cdot \nabla^{\alpha^+} v \,dx -\into \nabla^\delta \lt(\frac{e^g}{1+h}\rt)\nabla^{\alpha^-}u \cdot \nabla^\alpha v\,dx\\
&\le -\frac13 \|\nabla^\alpha v\|_{L^2}^2  + C\|\nabla^{\alpha^-} u\|_{L^2}\|\nabla(\nabla^\alpha v)\|_{L^2} + \lt\| \nabla^\delta \lt(\frac{e^g}{1+h}\rt)\rt\|_{L^\infty}\|\nabla^{\alpha^-} u\|_{L^2}\|\nabla^\alpha v\|_{L^2}\\
&\le -\frac14 \|\nabla^\alpha v\|_{L^2}^2 + C\sqrt{W(\ell-1;t)}\|\nabla(\nabla^\alpha v)\|_{L^2} + CW(\ell-1;t),
\end{aligned}\]
where $\alpha^-$ is defined as \eqref{index_alp} and $\alpha^+$ is given by

\[
\alpha+ \delta = \alpha^+, \quad |\delta|=1.
\]

\noindent $\circ$ (The case (ii) for $\mathcal{L}_4^1$) Here, we use $m\ge1$ and integration by parts to obtain

\[\begin{aligned}
\mathcal{L}_4^1 &= -\int_\om \frac{e^g}{1+h}|\nabla_{t,x}^\alpha v|^2\,dx + \int_\om \frac{e^g}{1+h} \pa_t^m \nabla^\beta u \cdot \pa_t^m \nabla^\beta v\,dx\\
&= -\int_\om \frac{e^g}{1+h}|\nabla_{t,x}^\alpha v|^2\,dx - \int_\om \frac{e^g}{1+h} \lt[\pa_t^{m-1}\nabla^\beta\lt( u \cdot \nabla u + \nabla g + (u-v)\rt) \rt]\cdot \pa_t^m \nabla^\beta v\,dx\\
&\le -\frac13\|\nabla_{t,x}^\alpha v\|_{L^2}^2 + CW(\ell;t)\|\pa_t^m \nabla^\beta v\|_{L^2} + C(\|\pa_t^{m-1}\nabla^\beta u\|_{L^2} + \|\pa_t^{m-1} \nabla^\beta v\|_{L^2})\|\pa_t^m \nabla^\beta v\|_{L^2} \\
&\quad + \int_\om \nabla\lt(\frac{e^g}{1+h}\rt) \lt(\pa_t^{m-1}\nabla^\beta g\rt) \cdot \pa_t^m \nabla^\beta v\,dx + \int_\om \frac{e^g}{1+h}\lt(\pa_t^{m-1}\nabla^\beta g\rt) \nabla \cdot (\pa_t^m \nabla^\beta v)\,dx\\
&\le  -\frac13\|\nabla_{t,x}^\alpha v\|_{L^2}^2+ CW(\ell;t)\|\nabla_{t,x}^\alpha v\|_{L^2} + C(\|\pa_t^{m-1}\nabla^\beta u\|_{L^2} + \|\pa_t^{m-1} \nabla^\beta v\|_{L^2})\| \nabla_{t,x}^\alpha v\|_{L^2} \\
&\quad + C\e  \|\pa_t^{m-1}\nabla^\beta g\|_{L^2} \| \nabla_{t,x}^\alpha v\|_{L^2} + C\|\pa_t^{m-1}\nabla^\beta g\|_{L^2}  \|\nabla( \nabla_{t,x}^\alpha v)\|_{L^2}\\
&\le -\frac14 \|\nabla_{t,x}^\alpha v\|_{L^2}^2 + C\e  W(\ell;t) + CW(\ell-1;t) + C\sqrt{W(\ell-1;t)} \|\nabla( \nabla_{t,x}^\alpha v)\|_{L^2}.
\end{aligned}\]
In either case, we can get

\[
\mathcal{L}_4^1 \le -\frac14 \|\nabla_{t,x}^\alpha v\|_{L^2}^2 + C\e  W(\ell;t) + CW(\ell-1;t) + C\sqrt{W(\ell-1;t)} \|\nabla( \nabla_{t,x}^\alpha v)\|_{L^2}.
\]
For $\mathcal{L}_4^2$, we have

\[
\begin{aligned}
\mathcal{L}_4^2 &= -\sum_{\mu<\alpha, \ |\mu|\neq0} \binom{\alpha}{\mu} \int_\om \nabla_{t,x}^\mu \lt(\frac{e^g}{1+h}\rt) \nabla_{t,x}^{\alpha-\mu} (v-u)\cdot \nabla_{t,x}^\alpha v\,dx\\
&\quad -\int_\om \nabla_{t,x}^\alpha \lt(\frac{e^g}{1+h}\rt) (v-u)\cdot \nabla_{t,x}^\alpha v\,dx\\
&\le C\sum_{\mu<\alpha, \ |\mu|\neq0} \lt\|\nabla_{t,x}^\mu \lt(\frac{e^g}{1+h}\rt)\rt\|_{L^4} \|\nabla_{t,x}^{\alpha-\mu} (v-u)\|_{L^4} \|\nabla_{t,x}^\alpha v\|_{L^2}\\
&\quad +  (\|u\|_{L^\infty} + \|v\|_{L^\infty}) \lt\|\nabla_{t,x}^\alpha \lt(\frac{e^g}{1+h}\rt)\rt\|_{L^2}\|\nabla_{t,x}^\alpha v\|_{L^2}.
\end{aligned}
\]
Now, we claim that
\bq\label{ind_2}
\|\nabla_{t,x}^\alpha ( e^g)\|_{L^2} \le C\sqrt{W(\ell;t)} , \quad 1 \le \ell \le s, \quad |\alpha|=\ell,
\eq
where $C=C(\ell,s,\om)$ is independent of $T$. We also argue by induction on $\ell$. When $|\alpha| = 1$, we have

\[
\|\nabla_{t,x}^\alpha (e^g)\|_{L^2} = \| e^g\nabla_{t,x}^\alpha  g\|_{L^2} \le C\|\nabla_{t,x}^\alpha g\|_{L^2}  \le C \sqrt{W(\ell; t)}.
\]
Then, suppose \eqref{ind_2} holds for every $\alpha$ with $|\alpha|=p$ and $1 \le p \le \ell-1$. For $|\alpha|=\ell$, we can get

\begin{align*}
\|\nabla_{t,x}^\alpha (e^g)\|_{L^2} &=\lt\|\nabla_{t,x}^{\alpha^-} \lt(e^g \nabla_{t,x}^\delta g\rt) \rt\|_{L^2}\\
&= \lt\| e^g \nabla_{t,x}^\alpha g + \nabla_{t,x}^{\alpha^-}(e^g) \nabla_{t,x}^\delta g + \sum_{\mu < \alpha^-, \ |\mu|\neq0} \binom{\alpha^-}{\mu} \nabla_{t,x}^{\mu}(e^g) \nabla_{t,x}^{\alpha-\mu} g\rt\|_{L^2}\\
&\le C\|\nabla_{t,x}^\alpha g\|_{L^2}+ \| \nabla_{t,x}^\delta g\|_{L^\infty}\|\nabla_{t,x}^{\alpha^-}(e^g)\|_{L^2} + C\sum_{\mu < \alpha^-, \ |\mu|\neq0} \|\nabla_{t,x}^\mu(e^g)\|_{L^4}\|\nabla_{t,x}^{\alpha-\mu} g\|_{L^4}\\
&\le C\|\nabla_{t,x}^\alpha g\|_{L^2} + C\e \|\nabla_{t,x}^{\alpha^-}(e^g)\|_{L^2}+ C\sum_{\mu < \alpha^-, \ |\mu|\neq0} \|\nabla_{t,x}^\mu(e^g)\|_{H^1}\|\nabla_{t,x}^{\alpha-\mu} g\|_{H^1}\\
&\le C\sqrt{W(\ell;t)},
\end{align*}
which completes the proof of relation \eqref{ind_2}. Thus, we can combine \eqref{ind_1} with \eqref{ind_2} to yield

\bq\label{L2-5.1}
\begin{aligned}
\lt\|\nabla_{t,x}^\alpha\lt(\frac{e^g}{1+h}\rt)\rt\|_{L^2} &\le C\lt\|\nabla_{t,x}^\alpha \lt(\frac{1}{1+h}\rt)\rt\| + C\|\nabla_{t,x}^\alpha (e^g)\|_{L^2}\\
&\quad + C\sum_{\mu<\alpha, \ |\mu|\neq0} \|\nabla_{t,x}^\mu (e^g)\|_{L^4} \lt\|\nabla_{t,x}^{\alpha-\mu}\lt(\frac{1}{1+h}\rt)\rt\|_{L^4}\\
&\le C\lt(\sqrt{W(\ell;t)} + \sqrt{ \mathcal{S}(\ell;t)}\rt) + C\sum_{\mu<\alpha, \ |\mu|\neq0} \|\nabla_{t,x}^\mu (e^g)\|_{H^1} \lt\|\nabla_{t,x}^{\alpha-\mu}\lt(\frac{1}{1+h}\rt)\rt\|_{H^1}\\
&\le C\lt(\sqrt{W(\ell;t)} + \sqrt{ \mathcal{S}(\ell;t)}\rt),
\end{aligned}
\eq
where $C=C(\ell,s,\om)$ is independent of $T$. Thus, we apply the relation \eqref{L2-5.1} to $\mathcal{L}_4^2$ and get

\[
\mathcal{L}_4^2 \le C\e \lt(\sqrt{W(\ell;t)} + \sqrt{ \mathcal{S}(\ell;t)}\rt) \|\nabla_{t,x}^\alpha v\|_{L^2},
\]
where $C=C(\ell,s,\om)$ is independent of $T$. Hence, we obtain

\[
\mathcal{L}_4 \le -\frac14\|\nabla_{t,x}^\alpha v\|_{L^2}^2 +  C\e  W(\ell;t) + C\e   \mathcal{S}(\ell;t) + CW(\ell-1;t) + C\sqrt{W(\ell-1;t)} \|\nabla( \nabla_{t,x}^\alpha v)\|_{L^2}.
\]
We collect all the estimates for $\mathcal{L}_i$'s and use Young's inequality to get

\bq\label{est_v}
\begin{aligned}
\frac{d}{dt} &\|\nabla_{t,x}^\alpha v\|_{L^2}^2 + \frac12\|\nabla_{t,x}^\alpha v\|_{L^2}^2 + \frac58\|\nabla(\nabla_{t,x}^\alpha v)\|_{L^2}^2 \\
&\le C\e   \mathcal{S}(\ell;t) + C\e W(\ell;t) + CW(\ell-1;t) + \gamma\int_\om (1+h)^{\gamma-2} \nabla \cdot (\nabla_{t,x}^\alpha v) (\nabla_{t,x}^\alpha h)\,dx,
\end{aligned}
\eq
where $C$ is independent of $T$.\\

\noindent $\bullet$ (Step B: Estimates for $\nabla_{t,x}^\alpha h$): Straightforward computation gives

\begin{align*}
\frac12\frac{d}{dt} \|\nabla_{t,x}^\alpha h\|_{L^2}^2 &= -\int_\om  \nabla_{t,x}^\alpha \lt[ \nabla \cdot ((1+h)v)\rt] \nabla_{t,x}^\alpha h\,dx\\
&= \frac12 \int_\om (\nabla\cdot v)|\nabla_{t,x}^\alpha h|^2\,dx\\
&\quad - \sum_{\mu<\alpha}\binom{\alpha}{\mu}\int_\om  \lt[\nabla(\nabla_{t,x}^\mu h) \cdot \nabla_{t,x}^{\alpha-\mu} v \rt]\nabla_{t,x}^\alpha h\,dx\\
&\quad - \sum_{\mu\le\alpha, \ |\mu|\neq0} \binom{\alpha}{\mu} \int_\om  \lt[\nabla_{t,x}^\mu h \nabla\cdot(\nabla_{t,x}^{\alpha-\mu} v) \rt]\nabla_{t,x}^\alpha h\,dx\\
&\quad-\int_\om  (1+h)\nabla\cdot (\nabla_{t,x}^\alpha v) \nabla_{t,x}^\alpha h\,dx\\
&\le C\e   \mathcal{S}(\ell;t) -\int_\om (1+h)\nabla\cdot (\nabla_{t,x}^\alpha v)\nabla_{t,x}^\alpha h \,dx,
\end{align*}
where $C=C(\ell,s,\om)$ is independent of $T$. Now, we combine the above estimate with \eqref{est_v} to attain

\begin{align*}
\frac{d}{dt}&\lt( \|\nabla_{t,x}^\alpha v\|_{L^2}^2 + \gamma\|\nabla_{t,x}^\alpha h\|_{L^2}^2\rt) + \frac12\|\nabla_{t,x}^\alpha v\|_{L^2}^2 + \frac58\|\nabla(\nabla_{t,x}^\alpha v)\|_{L^2}^2\\
&\le C\e   \mathcal{S}(\ell;t) + C\e W(\ell;t) + CW(\ell-1;t) \\
&\quad + \gamma \int_\om \lt( (1+h)^{\gamma-2} - (1+h)\rt) \nabla \cdot (\nabla_{t,x}^\alpha v)  (\nabla_{t,x}^\alpha h)\,dx\\
&\le C\e   \mathcal{S}(\ell;t) + C\e W(\ell;t) + CW(\ell-1;t) + C\|\nabla_{t,x}^\alpha h\|_{L^2}\|h\|_{L^\infty}\|\nabla(\nabla_{t,x}^\alpha v)\|_{L^2}\\
&\le C\e   \mathcal{S}(\ell;t) + C\e W(\ell;t) + CW(\ell-1;t) + \frac18\|\nabla(\nabla_{t,x}^\alpha v)\|_{L^2}^2,
\end{align*}
and hence,
\bq\label{L2-5.2}
\begin{aligned}
\frac{d}{dt}&\lt(\|\nabla_{t,x}^\alpha v\|_{L^2}^2 +\gamma \|\nabla_{t,x}^\alpha h\|_{L^2}^2\rt) + \frac12\lt(\|\nabla_{t,x}^\alpha v\|_{L^2}^2 + \|\nabla(\nabla_{t,x}^\alpha v)\|_{L^2}^2\rt)\\
&\le C\e   \mathcal{S}(\ell;t) + C\e W(\ell;t) + CW(\ell-1;t),
\end{aligned}
\eq
where $C=C(\ell,s,\gamma,\om)$ is independent of $T$. Therefore, we sum the relation \eqref{L2-5.2} over every $\alpha$ with $0 \le |\alpha| \le \ell$ to get the desired result.

%
%
%
%
%

\end{document}